\numberwithin{equation}{section}
\theoremstyle{plain}
\newtheorem{thm}{Theorem}[section]
\crefname{thm}{Theorem}{Theorems}
\theoremstyle{plain}
\newtheorem{lem}[thm]{Lemma}
\crefname{lem}{Lemma}{Lemmas}
\theoremstyle{plain}
\newtheorem{cor}[thm]{Corollary}
\theoremstyle{plain}
\newtheorem*{claim*}{Claim}
\crefname{claim}{Claim}{Claims}
\theoremstyle{definition}
\newtheorem{defn}[thm]{Definition}
\crefname{defn}{Definition}{Definitions}
\theoremstyle{plain}
\newtheorem{conjecture}[thm]{Conjecture}
\crefname{conjecture}{Conjecture}{Conjectures}
\theoremstyle{plain}
\newtheorem{prop}[thm]{Proposition}
\crefname{prop}{Proposition}{Propositions}
\theoremstyle{definition}
\theoremstyle{definition}
\theoremstyle{plain}
\newtheorem{claim}[thm]{Claim}
\newtheorem{fact}[thm]{Fact}
\author{Jacob Fox\thanks{Department of Mathematics, Stanford University, Stanford, CA 94305.
Email: \href{mailto:jacobfox@stanford.edu} {\nolinkurl{jacobfox@stanford.edu}}.
Research supported by a Packard Fellowship and by NSF Career Award
DMS-1352121.}\and Matthew Kwan\thanks{Department of Mathematics, Stanford University, Stanford, CA 94305.
Email: \href{mailto:mattkwan@stanford.edu} {\nolinkurl{mattkwan@stanford.edu}}.
Research supported in part by SNSF project 178493.}\and Lisa Sauermann\thanks{School of Mathematics, Institute for Advanced Study, Princeton, NJ 08540.
Email: \href{mailto:lsauerma@stanford.edu} {\nolinkurl{lsauerma@stanford.edu}}.}}
\date{}
\let\originalleft\left
\let\originalright\right
\renewcommand{\left}{\mathopen{}\mathclose\bgroup\originalleft}
\renewcommand{\right}{\aftergroup\egroup\originalright}
\renewcommand*{\UrlTildeSpecial}{%
  \do\~{%
    \mbox{%
      \fontfamily{ptm}\selectfont
      \textasciitilde
    }%
  }%  
}%    
\let\Url@force@Tilde\UrlTildeSpecial
\begin{document}
\title{Anticoncentration for subgraph counts in random graphs}

\maketitle
\global\long\def\RR{\mathbb{R}}%
\global\long\def\QQ{\mathbb{Q}}%
\global\long\def\E{\mathbb{E}}%
\global\long\def\Var{\operatorname{Var}}%
\global\long\def\CC{\mathbb{C}}%
\global\long\def\NN{\mathbb{N}}%
\global\long\def\ZZ{\mathbb{Z}}%
\global\long\def\GG{\mathbb{G}}%
\global\long\def\tallphantom{\vphantom{\sum}}%
\global\long\def\tallerphantom{\vphantom{\int}}%
\global\long\def\supp{\operatorname{supp}}%
\global\long\def\one{\boldsymbol{1}}%
\global\long\def\d{\operatorname{d}}%
\global\long\def\Unif{\operatorname{Unif}}%
\global\long\def\Po{\operatorname{Po}}%
\global\long\def\Bin{\operatorname{Bin}}%
\global\long\def\Ber{\operatorname{Ber}}%
\global\long\def\Geom{\operatorname{Geom}}%
\global\long\def\Rad{\operatorname{Rad}}%
\global\long\def\floor#1{\left\lfloor #1\right\rfloor }%
\global\long\def\ceil#1{\left\lceil #1\right\rceil }%
\global\long\def\cond{\,\middle|\,}%
\global\long\def\g{\boldsymbol{\gamma}}%
\global\long\def\x{\boldsymbol{\xi}}%
\global\long\def\su{\subseteq}%
\global\long\def\sm{\!\setminus\!}

\global\long\def\ls#1{\textcolor{blue}{\textbf{[LS comments:} #1\textbf{]}}}
%\global\long\def\jf#1{\textcolor{green}{\textbf{[JF comments:} #1\textbf{]}}}
\global\long\def\mk#1{\textcolor{orange}{\textbf{[MK comments:} #1\textbf{]}}}

\begin{abstract}\noindent
Fix a graph $H$ and some $p\in (0,1)$, and let $X_H$ be the number of copies of $H$ in a random graph $\GG(n,p)$. Random variables of this form have been intensively studied since the foundational work of Erd\H os and R\'enyi. There has been a great deal of progress over the years on the large-scale behaviour of $X_H$, but the more challenging problem of understanding the small-ball probabilities has remained poorly understood until now. More precisely, how likely can it be that $X_H$ falls in some small interval or is equal to some particular value? In this paper we prove the almost-optimal result that if $H$ is connected then for any $x\in \NN$ we have $\Pr(X_H=x)\le n^{1-v(H)+o(1)}$. Our proof proceeds by iteratively breaking $X_H$ into different components which fluctuate at ``different scales'', and relies on a new anticoncentration inequality for random vectors that behave ``almost linearly''.

%\vspace{0.2cm}

%\noindent\textbf{Keywords:} random graphs, anticoncentration.
\end{abstract}

\section{Introduction}

Let $\GG\left(n,p\right)$ be the binomial random graph model, where
we fix a set of $n$ vertices and include each of the $\binom{n}{2}$
possible edges independently with probability $p$. For graphs $H$
and $G$, let $X_{H}\left(G\right)$ be the number of subgraphs of
$G$ isomorphic to $H$, so that if $G\sim\GG\left(n,p\right)$ then
we can interpret $X_{H}$ as the random variable that counts the number
of copies of $H$ in a random graph.

These subgraph-counting random variables and their distributions are
central objects of study in the theory of random graphs, going back
to the foundational work of Erd\H os and R\'enyi~\cite{ER60}.
Early work~\cite{Bol81,ER60,RV85} concerned\emph{ }existence of
subgraphs: fixing $H$, for which $n$ and $p$ is it likely that
$X_{H}>0$, and for which $n$ and $p$ is it likely that $X_{H}=0$?
It turns out that there is a \emph{threshold} value of $p$ (as a
decaying function of $n$) that cleanly separates these two behaviours,
and further work~\cite{Bol81,BW89,ER60,KR83,RV85,Sch79} focused
on investigating the (Poisson-type) distribution of $X_{H}$ near
this threshold.

In this paper, we are more interested in the behaviour far above this existence threshold (when $p$ is a constant
independent of $n$). When appropriately normalised, $X_{H}$ has an asymptotically\footnote{All asymptotics, here and for the rest of the paper, are as $n\to\infty$, while $p$ is fixed.}
normal distribution (this was proved by Nowicki and Wierman~\cite{NW88}
and Ruci\'nski~\cite{Ruc88}, following several results~\cite{Bar82,Kar84,KR83}
pushing increasingly further past the existence threshold). Further work by Barbour, Karo\'nski and
Ruci\'nski~\cite{BKR89} provided quantitative bounds on the rate of
convergence to the normal distribution. However,
this asymptotic normality only characterises the ``large-scale''
behaviour of the distribution of $X_{H}$, and is basically due to
the fact that $X_{H}$ closely correlates with the number of edges
in $\GG\left(n,p\right)$.

A more challenging direction of research is to understand ``local'' aspects of
the distributions of these subgraph-counting random variables\footnote{We remark that another completely different challenging direction of research is the study of \emph{large deviations} of subgraph counts in random graphs. Recently, there has been a lot of progress in this area, see for example the monograph \cite{Cha17}, the recent papers \cite{Aug20,BB19,BGLZ17,CD20,HMS19}, and the references therein.}. In the past decade, there have been a number of advances in this direction. Following work by Loebl, Matou\v sek and Pangr\'ac~\cite{PML04} for the case where $H$ is a triangle, it
was proved by Kolaitis and Kopparty~\cite{KK13} (see also \cite{DKR15})
that if we fix some $p\in\left(0,1\right)$, some prime $q\in\NN$
and some connected graph $H$ with at least one edge, then $X_{H}$
mod $q$ has an asymptotically uniform distribution on $\left\{ 0,\dots,q-1\right\} $.
More recently, \emph{local central
limit theorems} have begun to emerge, giving asymptotic formulas for
the point probabilities $\Pr\left(X_{H}=x\right)$ in terms of a normal
density function. Such a theorem was first proved for the case where
$H$ is a triangle by Gilmer and Kopparty~\cite{GK16} (see also
\cite{Ber16}), and this was extended by Berkowitz~\cite{Ber18}
to the case where $H$ is any clique.

In this paper we are concerned with a somewhat looser question: what can be said about the \emph{anticoncentration}
behaviour of $X_{H}$? Roughly speaking, this is asking for uniform
upper bounds on the point probabilities $\Pr\left(X_{H}=x\right)$,
or more generally on the small ball probabilities $\Pr\left(X_{H}\in I\right)$,
where $I$ is an interval of prescribed length. Meka, Nguyen and Vu~\cite{MNV16} developed some general polynomial anticoncentration inequalities, and used the polynomial
structure of $X_{H}$ to prove the bound $\Pr\left(X_{H}=x\right)\le n^{-1+o\left(1\right)}$
for constant $p\in\left(0,1\right)$ and any $H$ that contains at
least one edge. In \cite{FKS1} we proposed the following conjecture.
\begin{conjecture}
\label{conj:general}Fix $p\in\left(0,1\right)$ and fix a graph $H$
with no isolated vertices. Then
\[
\max_{x\in\NN}\Pr\left(X_{H}=x\right)=O\left(n^{1-v\left(H\right)}\right),
\]
where $v(H)$ is the number of vertices of $H$.
\end{conjecture}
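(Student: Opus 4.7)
The plan is to proceed in two stages: first sharpen the bound of $n^{1-v(H)+o(1)}$ for connected $H$ (which the paper's iterative decomposition delivers) to the optimal $O(n^{1-v(H)})$, and then reduce the disconnected case to the connected case. For Stage~1, I would aim for a local central limit theorem for $X_H$ of the form $\Pr(X_H = x) = (1+o(1))\sigma^{-1}\phi((x-\mu)/\sigma)$ uniformly in $x$, where $\mu,\sigma$ are the mean and standard deviation of $X_H$ and $\phi$ is the standard Gaussian density. This has been established by Gilmer--Kopparty for $H = K_3$ and by Berkowitz for cliques. The natural approach is Fourier inversion, writing the point probability as an integral of $\varphi(t) := \E[e^{itX_H}]$ and splitting the range of $t$ into three regimes: a small-$t$ regime $|t|\sigma \le (\log n)^{C}$ handled by the quantitative CLT of Barbour--Karo\'nski--Ruci\'nski; a large-$t$ regime near $\pi$ handled by an edge-swap argument (conditioning on all but one edge indicator and using the explicit two values taken by $X_H$); and an intermediate regime, where I would apply the paper's anticoncentration inequality for almost-linear random vectors at the Fourier level, inserting the $n^{1-v(H)+o(1)}$ bound into an exponential to obtain usable decay of $\varphi(t)$.

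For Stage~2, let $H_1,\dots,H_k$ be the connected components of $H$, and write $v_i = v(H_i)$, $\mu_i = \E[X_{H_i}]$, and $\sigma_i = \sqrt{\Var[X_{H_i}]} = \Theta(n^{v_i-1})$. Every copy of $H$ decomposes uniquely (modulo automorphisms permuting isomorphic components) into pairwise vertex-disjoint copies of the $H_i$, so
\begin{equation*}
    X_H = c(H)\prod_{i=1}^k X_{H_i} - R,
\end{equation*}
where $c(H)$ is an explicit combinatorial constant and $R$ is a polynomial in the edge indicators counting tuples of copies with overlapping vertex sets. Writing $X_{H_i} = \mu_i + Y_i$ and expanding, the leading stochastic part of $X_H$ is the linear combination $\sum_i \bigl(\prod_{j\neq i}\mu_j\bigr) Y_i$ with coefficients of order $n^{v(H)-v_i}$. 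By Stage~1 each $Y_i$ has point probability $O(1/\sigma_i)$; partitioning $[n]$ into $k$ random vertex blocks $V_1,\dots,V_k$ and restricting each $X_{H_i}$ to copies confined to $V_i$ gives a near-independent approximation whose error is controlled by the typical overlap between a random $H$-copy and a random block assignment. A standard Fourier convolution estimate then bounds the density of the linear combination by $\min_i \bigl(\sigma_i \prod_{j\ne i}\mu_j\bigr)^{-1} = \Theta(n^{1-v(H)})$; the bilinear and higher-order expansion terms together with the overlap remainder $R$ are of strictly smaller typical scale and can be absorbed via a smoothing step.

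The main obstacle is Stage~1 for connected $H$ that are not strictly balanced. In this case $X_H$ inherits contributions from every dense subgraph $H'\subseteq H$, so $\varphi(t)$ exhibits nontrivial behaviour on each of several distinct Fourier scales; controlling $|\varphi(t)|$ uniformly across all of them without incurring an $n^{o(1)}$ union-bound loss appears to require combining the paper's new almost-linear anticoncentration inequality with a quantitative Stein-method estimate strong enough to track the decay of $\varphi(t)$ itself rather than merely a Kolmogorov distance. Once Stage~1 is in hand, Stage~2 is essentially a Fourier-analytic product estimate of routine type, with the main technical care going into the random-block decoupling so that the anticoncentration densities really do multiply.
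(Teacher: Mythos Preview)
The statement you are addressing is \cref{conj:general}, which the paper does \emph{not} prove; the paper's main result is the weaker \cref{thm:subgraph-counts} (the $n^{1-v(H)+o(1)}$ bound for connected $H$). Moreover, as recorded in the paper's remark added in proof, Sah and Sawhney have since shown that the conjecture is \emph{false} for general disconnected $H$, while establishing it for connected $H$ via a local limit theorem. So your Stage~1 is on the right track---and is now a theorem, though the control of $\varphi(t)$ across the multiple Fourier scales that you correctly flag as the main obstacle is genuinely delicate---but your Stage~2 cannot be salvaged: you are proposing to prove a statement that is not true.

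The concrete failure in Stage~2 is already visible when the components are isomorphic, say $H$ is a disjoint union of $k$ copies of some $H'$. Then $X_{H_1}=\dots=X_{H_k}$ are the \emph{same} random variable, and up to the overlap correction $R$ one has $X_H=c\,X_{H'}^{\,k}$, a deterministic polynomial in a single $X_{H'}$. Your random vertex partition does not decouple anything here: the block-restricted product $\prod_i X_{H_i}(G[V_i])$ accounts for only a $k^{-v(H)}$ fraction of the copies of $H$, so the ``error'' consisting of copies that cross blocks is of the same order as $X_H$ itself, not a lower-order remainder. Likewise, the ``bilinear and higher-order'' terms you propose to absorb by smoothing are deterministic functions of the \emph{same} randomness as the linear term and therefore cannot be averaged away; they conspire with it. The paper's concluding remarks already flag the extreme form of this phenomenon for homomorphism counts, where $X_H^{\mathrm{hom}}=(X_{H'}^{\mathrm{hom}})^k$ exactly and the point probabilities are those of $X_{H'}^{\mathrm{hom}}$, of order $n^{1-v(H')}$ rather than $n^{1-v(H)}$.
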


The motivation for \cref{conj:general} is that if $p$ is fixed then
$\Var X_{H}=\Theta\left(n^{2v\left(H\right)-2}\right)$ (see for example \cite[Lemma~3.5]{JLR00}), and the aforementioned
asymptotic normality therefore implies that $X_{H}$ is concentrated
on an interval of length $\Theta\left(n^{v\left(H\right)-1}\right)$.
Provided that the distribution of $X_{H}$ is sufficiently ``smooth'',
we should expect each value in this interval to have comparable probability.
Note that this line of reasoning implies that \cref{conj:general},
if true, is best possible: any stronger bound would contradict Chebyshev's inequality. Also, observe that
the assumption that $H$ has no isolated vertices is necessary: if
$H'$ is obtained from $H$ by removing isolated vertices then $X_{H}$
is a deterministic multiple of $X_{H'}$, so inherits its point probabilities.

We also remark that while Meka, Nguyen and Vu were the first to explicitly
consider anticoncentration of subgraph counts in general, actually
Pangr\'ac, Matou\v sek and Loebl~\cite{PML04} considered anticoncentration
of the triangle-count $X_{K_{3}}$ more than ten years earlier: a
primary motivation for their aforementioned work on triangle-counts
mod $q$ was to show that the point probabilities $\Pr\left(X_{K_{3}}=x\right)$
are small (they gave a bound of $O\left(1/\log n\right)$), which
in turn implies that two independent copies of $\GG\left(n,p\right)$
are unlikely to have the same Tutte polynomial. In addition, many
of the other aforementioned results concerning the distribution of
$X_{H}$ imply anticoncentration bounds: any central limit theorem
already implies that $\max_x\Pr\left(X_{H}=x\right)=o\left(1\right)$, and one can\footnote{The central limit theorem of Barbour, Karo\'nski and Ruci\'nski
was not stated in a way that allows one to directly read off
estimates for probabilities regarding $X_{H}$. But, it is
possible to deduce such an estimate with the method of \cite[Proposition 1.2.2]{Ros11}.} deduce from the quantitative central limit theorem of 
Barbour, Karo\'nski and Ruci\'nski~\cite{BKR89} that $\Pr\left(X_{H}=x\right)\le\Pr\left(\left|X-x\right|\le n^{v\left(H\right)-3/2}\right)=O\left(1/\sqrt{n}\right)$. The local central limit theorem of Berkowitz~\cite{Ber18} definitively
settles the matter in the case where $H=K_{h}$ is an $h$-vertex clique, in which
case it actually gives the asymptotically optimal bound $\Pr\left(X_{H}=x\right)\le\left(2\pi\Var X_{H}\right)^{-1/2}+o\left(n^{1-h}\right)=O(n^{1-h})$.

In \cite{FKS1} we used ideas related to Erd\H os' combinatorial
proof of the Erd\H os--Littlewood--Offord theorem (see \cite{Erd45}) to give a simple
proof of the general bound $\Pr\left(X_{H}=x\right)\le\Pr\left(\left|X_{H}-x\right|\le n^{v\left(H\right)-2}\right)=O\left(1/n\right)$,
and we showed how to extend these methods to prove the sharper bound
$\Pr\left(X_{K_{h}}=x\right)=n^{1-h+o\left(1\right)}$ in the case
where $H=K_{h}$ is a clique. In this paper we develop these methods
much further, proving an approximate version of \cref{conj:general}
for all connected $H$.
\begin{thm}
\label{thm:subgraph-counts}Fix $p\in\left(0,1\right)$ and fix a connected
graph $H$. Then
\[
\max_{x\in\ZZ}\Pr\left(X_{H}=x\right)=n^{1-v\left(H\right)+o\left(1\right)}.
\]
\end{thm}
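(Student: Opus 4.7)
The plan is to expose the randomness of $\GG(n,p)$ vertex-by-vertex and to apply a conditional anticoncentration bound at each stage. Fix an ordering $v_1, \ldots, v_n$ of the vertices and, for each $i$, let $\mathcal{E}_i$ denote the set of edges from $v_i$ to $\{v_1, \ldots, v_{i-1}\}$. After conditioning on $\mathcal{E}_1, \ldots, \mathcal{E}_{n-1}$, the random variable $X_H$ can be written as $c + f(\mathcal{E}_n)$, where the constant $c$ and the polynomial $f$ (of degree $\Delta(H)$ in the $n-1$ edge indicators in $\mathcal{E}_n$) are determined by the conditioning. Each degree-$d$ monomial $\prod_j \xi_{v_n u_j}$ of $f$ corresponds to a partial embedding of $H$ sending some vertex $w \in V(H)$ with $\deg_H(w)=d$ to $v_n$, so $\Pr(X_H=x)$ reduces to a conditional anticoncentration estimate for $f(\mathcal{E}_n)$.

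The target one-step estimate is a uniform bound of the form $\Pr(f = y \mid \mathcal{E}_1, \ldots, \mathcal{E}_{n-1}) \le n^{-1+o(1)}$, holding with high probability over the conditioning. Classical Littlewood--Offord applied to the linear part of $f$ (which only exists when $H$ has a pendant vertex) gives at best $O(1/\sqrt{n})$, short of what is needed; this is precisely where the abstract's ``anticoncentration for almost-linear vectors'' inequality should do the work. The plan is to re-parametrize $f$ by grouping its monomials into a vector-valued random variable whose coordinates are jointly nearly linear in independent blocks of Bernoullis, and to prove that this vector has point densities on scale $n^{-1+o(1)}$. The non-degeneracy hypotheses needed by such an inequality should translate to counts of certain subgraphs $H' \subsetneq H$ being close to their expectations in the conditioned graph, which hold with high probability by standard concentration estimates.

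A single exposure step yields only $n^{-1+o(1)}$, so the full exponent $v(H)-1$ is produced by iterating: after revealing $\mathcal{E}_n$ one applies the same reasoning to an appropriate auxiliary graph on $v(H)-1$ vertices that captures the remaining randomness, multiplying $v(H)-1$ factors of $n^{-1+o(1)}$ to reach $n^{1-v(H)+o(1)}$; the ``different scales'' in the abstract presumably refer to the different orders of fluctuation contributed at each level of the iteration. The main obstacle I expect is the one-step anticoncentration when $H$ has no pendant vertex, because then $f$ has no linear part and its leading terms are of degree $\ge 2$, and anticoncentrating such a form at the optimal scale is genuinely non-trivial already for $H = C_4$. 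The ``almost-linear'' framework should address this by exploiting the combinatorial structure of $f$ (beyond its abstract degree) via a further conditioning or change of variables that linearises the leading part. A secondary challenge is bookkeeping the $n^{o(1)}$ losses across $v(H)-1$ iterations together with the concentration-failure events along the way, but these should compound benignly since only polynomially many $n^{o(1)}$ factors occur.
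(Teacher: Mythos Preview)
Your plan has the right flavor --- decompose by singling out a vertex and combine a ``coarse'' bound with a conditional ``fine'' bound --- but the iteration step as you describe it does not work, and this is the heart of the matter.

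First, a basic point: after you condition on $\mathcal{E}_1,\ldots,\mathcal{E}_{n-1}$ (i.e., on all of $G-v_n$), the only remaining randomness is $\mathcal{E}_n$, so one pass gives at best $\max_y \Pr(f(\mathcal{E}_n)=y)$, and there is nothing left to ``iterate'' on after revealing $\mathcal{E}_n$. What you actually need is the decomposition $X_H = X_H(G-v_n) + X_H^{v_n}$ with $X_H^{v_n}=f(\mathcal{E}_n)$, where the first term is handled at a coarse scale (it must lie in a short interval, because $X_H^{v_n}$ is concentrated) and the second term inductively. This is indeed what the paper does. The serious obstacle you are not addressing is that $X_H(G-v_n)$ and $X_H^{v_n}$ are \emph{not} independent --- both depend on $G-v_n$ --- so you cannot simply multiply the two probability bounds. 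In the clique case one can salvage this because $X_{K_h}^v$ depends only on $G[N(v)]$, allowing a multiple-exposure argument; but for general connected $H$ (already for $C_5$) the quantity $X_H^v$ depends on \emph{all} of $G-v$, and your sketch provides no mechanism for decoupling.

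The paper's resolution is to prove, via a high-moment argument, that $G-v$ is with high probability ``dispersed'': that $X_H^v$ is anticoncentrated subject only to the randomness of $N(v)$, for typical outcomes of $G-v$. Running that moment argument forces one to bound the \emph{joint} probability that many candidate neighbourhoods $A_1,\ldots,A_t$ for $v$ simultaneously produce prescribed values of $X_H^v$. This in turn forces the inductive hypothesis to be about random \emph{vectors} (indexed by these candidates), which is where the almost-linear-vector inequality actually enters --- not to boost a scalar bound from $n^{-1/2}$ to $n^{-1}$ as you suggest, but to control a $T$-dimensional small-ball probability. Finally, the non-degeneracy condition that inequality needs fails if one removes a single vertex at each step; the paper must remove a \emph{set} of vertices realizing all possible adjacency patterns to the previously chosen vertices, and this is what drives the ``colour system'' formalism. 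Your proposal does not anticipate any of this machinery, and without it the argument stalls after a single factor.
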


Concerning the $o(1)$-term in \cref{thm:subgraph-counts}, the arguments in our proof yield a bound for the $o(1)$-term that decays extremely slowly as $n$ goes to infinity. In order to simplify the presentation of the proof and to avoid additional technical details, we decided to write the proof in a way that does not give any explicit bounds for the $o(1)$-term.

The general idea for the proof of \cref{thm:subgraph-counts} is to break up $X_H$ into different components that fluctuate at ``different scales'' and handle each component separately. In the case where $H$ is a clique, this plan is relatively simple to execute, but in the more general setting of \cref{thm:subgraph-counts} there are a number of additional challenges that must be overcome. We discuss these in \cref{sec:outline}. Our proof has a number of new ingredients; one that is perhaps worth highlighting is a combinatorial anticoncentration
inequality for vector-valued random variables that behave ``almost
linearly'', in the spirit of some anticoncentration theorems due
to Hal\'asz. The details are in \cref{sec:halasz}.

\subsection{Basic definitions and notation}

We use standard graph-theoretic notation throughout. In particular, the vertex and edge sets of a graph $G$ are denoted by $V(G)$ and $E(G)$, and the sizes of these sets are denoted by $e(G)$ and $v(G)$. For disjoint vertex sets $A,B$ in a graph $G$, we write $e_G(A)$ for the number of edges $e(G[A])$ inside $A$, and we write $e_G(A,B)$ for the number of edges between $A$ and $B$. Abusing notation, for a vertex $v$ we write $e_G(v,A)$ to mean $e_G(\{v\},A)$, which is the size of the $A$-neighbourhood of $v$ in $G$. We write $N(v)$ for the neighbourhood of $v$ (that is, the set of vertices adjacent to $v$). A homomorphism $\phi$ from a multigraph $H$ to a multigraph $G$ consists of a map from the vertices of $H$ to the vertices of $G$, and a map from the edges of $H$ to the edges of $G$, such that whenever $e$ is an edge of $H$ between vertices $x$ and $y$, the image $\phi(e)$ is an edge between the vertices $\phi(x)$ and $\phi(y)$.

We initially introduced $X_H$ as the random variable that counts unlabelled copies of $H$ (as is standard in this area), but for the proof it will be slightly more convenient to redefine $X_H$ to count the number of \emph{labelled} copies of $H$ (injective homomorphisms from $H$ into $G$). The labelled/unlabelled distinction is irrelevant for \cref{thm:subgraph-counts}, because these two counts differ by a fixed multiplicative factor (the number of automorphisms of $H$).

We use standard asymptotic notation throughout. For functions $f=f\left(n\right)$
and $g=g\left(n\right)$ we write $f=O\left(g\right)$ to mean that there
is a constant $C$ such that $\left|f\right|\le C\left|g\right|$,
we write $f=\Omega\left(g\right)$ to mean there is a constant $c>0$
such that $f\ge c\left|g\right|$ for sufficiently large $n$, we
write $f=\Theta\left(g\right)$ to mean that $f=O\left(g\right)$
and $f=\Omega\left(g\right)$, and we write $f=o\left(g\right)$ or
$g=\omega\left(f\right)$ to mean that $f/g\to0$ as $n\to\infty$. Unless stated otherwise, all asymptotics are as $n\to\infty$ (all other variables should be viewed as constant).

We will use notation of the form $\E_{G}$ to indicate an expected value with respect to a random choice of $G$ (if there are other sources of randomness, then formally this is a conditional expected value). We write $\Ber(p)$ for the $p$-Bernoulli distribution, meaning that if $\xi\sim\Ber(p)$ then $\Pr(\xi=1)=p$ and $\Pr(\xi=0)=1-p$. Finally, we write $\NN$ for the set of non-negative integers, we write $[n]$ for the set $\{1,\dots,n\}$, and all logarithms are to base $e$.

\section{Discussion and main ideas of the proof}\label{sec:outline}

Before discussing the new ideas in the proof of \cref{thm:subgraph-counts},
it is worth reviewing the proofs in \cite{FKS1} giving weaker anticoncentration
bounds. We will build on these ideas to prove \cref{thm:subgraph-counts}.

First, to prove the bound $\Pr\left(\left|X_{H}-x\right|\le n^{v\left(H\right)-2}\right)=O\left(1/n\right)$,
the key observation was that $X_{H}$ is an ``almost linear'' function
of the edges of the underlying random graph $G\sim\GG\left(n,p\right)$.
Specifically, for a pair of vertices $e=\left\{ x,y\right\} $, the
difference $\Delta X_{H}:=X_{H}\left(G+e\right)-X_{H}\left(G-e\right)$
is tightly concentrated around its expectation $\E\Delta X_{H}=\Theta\left(n^{v\left(H\right)-2}\right)$,
meaning that adding or removing an edge typically causes $X_{H}$
to increase or decrease by about this amount\footnote{This observation is closely related to the fact that the \emph{number}
of edges in $G$ is closely correlated with $X_{H}$. This fact can
be used to prove a central limit theorem for $X_{H}$ (see for example
\cite[Example~6.4]{JLR00}).}. Using this observation, and some ideas from Erd\H os' proof of
the Erd\H os--Littlewood--Offord theorem~\cite{Erd45} and Lubell's
proof of the LYM inequality~\cite{Lub66}, it is possible to prove
that the anticoncentration behaviour of $X_{H}/\E\Delta X_{H}$ is
about the same as the anticoncentration behaviour of the number of edges of $G$, which is a binomial random
variable with parameters $\binom{n}{2}$ and $p$. This gives the
desired bound, which in some sense gives ``coarse scale'' anticoncentration for $X_H$.

Second, to prove the bound $\Pr\left(X_{K_{h}}=x\right)\le n^{1-h+o\left(1\right)}$
for cliques, the key idea was to fix a vertex $v$ and write $X_{K_{h}}\left(G\right)=X_{K_{h}}\left(G-v\right)+X_{K_{h-1}}\left(G\left[N\left(v\right)\right]\right)$.
That is, the number of $h$-cliques in $G$ is the same as the number
of $h$-cliques in $G-v$, plus the number of $\left(h-1\right)$-cliques
in the neighbourhood of $v$ (which yield an $h$-clique when combined
with $v$). Now, it is possible to use similar ideas as in the preceding paragraph to show that $X_{K_{h}}\left(G-v\right)$ is anticoncentrated at a coarse scale. On the other hand, $X_{K_{h-1}}\left(G\left[N\left(v\right)\right]\right)$ has a much smaller order of magnitude, and it is actually concentrated on a relatively small interval around its expectation. Furthermore, we can bound the point probabilities for $X_{K_{h-1}}\left(G\left[N\left(v\right)\right]\right)$ inductively.

Then, roughly speaking, the idea was as follows: We want to bound the probability of having $X_{K_{h}}\left(G\right)=X_{K_{h}}\left(G-v\right)+X_{K_{h-1}}\left(G\left[N\left(v\right)\right]\right)=x$. Since $X_{K_{h-1}}\left(G\left[N\left(v\right)\right]\right)$ is concentrated on a small interval around its expectation $\E X_{K_{h-1}}\left(G\left[N\left(v\right)\right]\right)$, in order to have $X_{K_{h}}\left(G\right)=x$ the value of $X_{K_{h}}\left(G-v\right)$ must (typically) be reasonably close to $x-\E X_{K_{h-1}}\left(G\left[N\left(v\right)\right]\right)$. Using the coarse scale anticoncentration bounds for $X_{K_{h}}\left(G-v\right)$, we can bound the probability that this happens. After knowing the value $X_{K_{h}}\left(G-v\right)$, we know what value $X_{K_{h-1}}\left(G\left[N\left(v\right)\right]\right)$ needs to take to have $X_{K_{h}}\left(G\right)=x$. By induction, we can bound the probability that $X_{K_{h-1}}\left(G\left[N\left(v\right)\right]\right)$
takes this particular value.

If $X_{K_{h}}\left(G-v\right)$ and $X_{K_{h-1}}\left(G\left[N\left(v\right)\right]\right)$
were independent, it would be easy to conclude the desired anticoncentration bound $\Pr\left(X_{K_{h}}=x\right)\le n^{1-h+o\left(1\right)}$;
we would be able to simply multiply the above two probability estimates.
Unfortunately, these random variables are not independent, so we need
to rule out the possibility that the fluctuations in $X_{K_{h}}\left(G-v\right)$
``cancel out'' the fluctuations in $X_{K_{h-1}}\left(G\left[N\left(v\right)\right]\right)$
in a way that causes $X_{K_{h}}$ to concentrate on a particular value.
The approach we took was to show that actually $X_{K_{h-1}}\left(G\left[N\left(v\right)\right]\right)$
is anticoncentrated even after conditioning on a typical outcome
of $G-v$ (after which the only remaining
randomness comes from the set of neighbours $N\left(v\right)$).

We proved a conditional anticoncentration bound of this type by induction, using a moment argument. To be more specific, we viewed the conditional probabilities $\Pr\left(X_{K_{h-1}}\left(G\left[N\left(v\right)\right]\right)=z\,\middle|\,G-v\right)$ as random variables depending on $G-v$. To study these random variables we studied their high moments, which essentially comes down to considering collections of different candidates for the neighborhood $N(v)$, and bounding the probability that for each of these candidates we simultaneously have $X_{K_{h-1}}\left(G\left[N\left(v\right)\right]\right)=z$. We accomplished this with a multiple exposure argument: we iteratively went through our candidate sets for $N(v)$ and exposed the
edges of $G-v$ inside each set which had not yet been exposed before. Using a suitable induction hypothesis, and the ideas sketched earlier, at each step we can bound the probability that the corresponding candidate set for $N(v)$ gives rise to the desired value of $X_{K_{h-1}}\left(G\left[N\left(v\right)\right]\right)$. This way we obtained a suitable bound for the moment argument.

Now, there are several obstacles that need to be overcome to generalise
the above argument beyond the case where $H$ is a clique. First,
the decomposition $X_{K_{h}}=X_{K_{h}}\left(G-v\right)+X_{K_{h-1}}\left(G\left[N\left(v\right)\right]\right)$
was very convenient for us: it allowed us to consider two separate
random variables, one of which can be studied on a ``coarse'' scale
using our Littlewood--Offord type techniques, and the other of which
can be studied inductively. Actually, it is not a huge problem to
generalise this decomposition. In general, we have $X_{H}=X_{H}\left(G-v\right)+X_{H}^{v}$,
where $X_{H}^{v}$ is the number of copies of $H$ in $G$ which contain
$v$. One can check that $X_{H}^{v}$ is a certain sum of weighted
subgraph counts in $G-v$, where each of the subgraphs has $v\left(H\right)-1$
vertices, and the weight of a subgraph depends on its intersection
with the neighbourhood $N\left(v\right)$ of $v$. So, if we generalise
the induction hypothesis to certain weighted sums of subgraph counts,
it is still possible to control the anticoncentration of $X_{H}^{v}$
inductively.

The second main obstacle, which is more serious, concerns the multiple-exposure
argument we used to show that $X_{K_{h-1}}\left(G\left[N\left(v\right)\right]\right)$
is anticoncentrated even after conditioning on a typical outcome of
$G-v$. This crucially depended on the fact that in order to know
the value of $X_{K_{h-1}}\left(G\left[N\left(v\right)\right]\right)$
given a particular candidate for $N\left(v\right)$, the only edges
of $G-v$ that we need to expose are the edges inside $N\left(v\right)$
(leaving the remaining edges for future rounds of exposure). Unfortunately,
in general one may need to examine all the edges of $G-v$ to determine
the value of $X_{H}^{v}$, even if we fix a candidate for $N\left(v\right)$.
Specifically, this is the case whenever $H$ is not a complete multipartite
graph (if $H$ is complete multipartite, then we do not need to expose
the edges which lie completely outside $N\left(v\right)$, and actually
in this special case it is not too hard to extend the proof in \cite{FKS1}
to prove \cref{conj:general}).

In the general case where $H$ is not a complete multipartite graph, in order to use a moment argument as above,
we need some other way to estimate the joint probability that many
candidates for $N\left(v\right)$ each result in a specific outcome
of $X_{H}^{v}$. Write $X_{H}^{v}\left(N\left(v\right),G-v\right)$
to indicate the dependence of $X_{H}^{v}$ on both $N\left(v\right)$
and $G-v$. For a collection of sets $A_{1},\dots,A_{t}$ as candidates for $N\left(v\right)$, we want to control the joint probability that all of $X_{H}^{v}\left(A_{1},G-v\right),\dots,X_{H}^{v}\left(A_{t},G-v\right)$ are equal to a given value. Since we cannot consider these random variables separately anymore (as we did before with the multiple exposure argument), we need to somehow modify the induction hypothesis to handle this joint probability.

Specifically, we can generalise to a statement about joint anticoncentration probabilities of random variables of the following type (thus strengthening the induction hypothesis). Take a sequence of distinct vertices $v_1, \dots,v_g$ and for each $v_i$, consider some collection of $t_i$ different candidates for the neighbourhood of $v_i$. Then, consider the $T=t_1\dotsm t_g$ different random variables obtained by making different choices for the neighbourhoods for each of $v_1, \dots,v_g$, and considering the number of copies of $H$ which contain all of
$v_1,\dots,v_g$, conditioned on $v_1,\dots,v_g$ having these neighbourhoods. The idea is that at each step of the induction we introduce a new vertex $v_j$, and we consider many candidates for the neighbourhood of $v_j$ for a moment argument. Given that we are considering joint probabilities of $T=t_1\dotsm t_g$ random variables, our induction hypothesis needs to give a bound of the form $n^{(g-h+1)T+o(1)}$ on the joint probability that all our random variables are equal to particular values. This can be viewed as an anticoncentration bound for a random \emph{vector} $\boldsymbol X$.

To make the above ideas work, we need multivariate generalisations of some of the ideas described so far. For example, we need an anticoncentration inequality for random \emph{vectors} that are almost-linear in the sense sketched earlier, having the property that adding or removing an edge typically causes
a predictable change in their values. There are some classical anticoncentration inequalities by Hal\'asz~\cite{Hal77} that give the kind of bounds we need,
for random vectors that depend linearly (and ``non-degenerately'') on a sequence of independent random choices. (Some kind of non-degeneracy assumption is necessary, to rule out situations where the random vector is always contained in a proper subspace of smaller dimension). The standard
proofs of Hal\'asz' inequalities are Fourier-analytic, and are not
robust enough to apply to our almost-linear setting, but we were able to find some
combinatorial arguments that apply to our setting, again inspired
by proofs of Erd\H os~\cite{Erd45} and Lubell~\cite{Lub66}. More details are in \cref{sec:halasz}.

In order to use the estimates in \cref{sec:halasz}, we
need to check a non-degeneracy condition: basically, we need to consider the effects of changing the status of various edges, and we need to show that the corresponding changes to $\boldsymbol X$ are in ``many different directions'', spanning $\RR^T$. We also
need to check a similar non-degeneracy condition for the effects of adding or removing
vertices from the various candidate neighbourhoods. Unfortunately, these non-degeneracy conditions do not hold for an arbitrary connected graph $H$ (they do hold, however, if $H$ has a vertex with edges to all other vertices). Therefore, we actually need to further modify our approach.

Instead of considering a single vertex $v_j$ in each step of the induction, we will consider $a_j$ different vertices, having a diverse range of adjacencies to the vertices previously chosen. Then, our decomposition is that we split the copies of $H$ into the copies that contain none of our $a_j$ identified vertices, and the copies that contain at least one of them. With this modification, there is a much richer range of possibilities for the effect of changing the status of an edge, and this allows us to prove the desired non-degeneracy condition. Unfortunately, while this modification is conceptually
rather simple, it complicates notation enormously. We now need to maintain a collection of sets of vertices,
and a collection of possibilities for the neighbourhoods of these
vertices. To encode all of these data we introduce the notion of
a \emph{colour system}: each step of the induction is associated with
a different colour, and at each step there are multiple ``shades''
of each colour indicating the different possibilities for the neighbourhoods
of the various vertices introduced at that step. We can then state
our induction hypothesis for a class of random variables defined in
terms of colour systems, and prove it using the ideas we discussed in this outline.

\section{Anticoncentration for \texorpdfstring{``almost-linear''}{"almost-linear"} random vectors}
\label{sec:halasz}

The Erd\H os--Littlewood--Offord theorem states that if $\xi_1,\dots,\xi_n$ are independent Bernoulli random variables satisfying $\Pr(\xi_i=0)=\Pr(\xi_i=1)=1/2$, and $X=a_1\xi_1+\dots+a_n\xi_n$ is a linear combination of these random variables (where each coefficient $a_i$ has absolute value at least one) then for any $x\in \RR$ we have $\Pr(|X-x|\le 1)=O(1/\sqrt n)$. As outlined in \cref{sec:outline}, in~\cite[Theorem~1.2]{FKS1} we adapted Erd\H os' proof of this theorem to handle the case of ``almost-linear'' functions of $\xi_1,\dots,\xi_n$.

In \cite{Hal77}, among other results, Hal\'asz gave a multivariate generalisation of the Erd\H os--Littlewood--Offord theorem, for sums of random vectors satisfying a certain non-degeneracy condition. Specifically, suppose that $\boldsymbol{a}_{1},\dots,\boldsymbol{a}_{n}\in\RR^{d}$ are $d$-dimensional vectors with the property that for every unit vector $\boldsymbol{e}\in\RR^{d}$, there are $\Omega\left(n\right)$ indices $i$ with $\left|\left\langle \boldsymbol{a}_{i},\boldsymbol{e}\right\rangle \right|\ge1$. Hal\'asz proved that, with $\boldsymbol{X}=\xi_{1}\boldsymbol{a}_{1}+\dots+\xi_{n}\boldsymbol{a}_{n}$, we have $\max_{\boldsymbol{x}\in \RR^d}\Pr\left(\left\Vert \boldsymbol{X}-\boldsymbol{x}\right\Vert _{2}\le1\right)=O\left(n^{-d/2}\right)$. As outlined in \cref{sec:outline}, for the proof of \cref{thm:subgraph-counts} we will need a similar bound for almost-linear $\boldsymbol{X}$. Our non-degeneracy condition will be somewhat cruder than Hal\'asz'; we assume that there are vectors $\boldsymbol{v}_{1},\dots,\boldsymbol{v}_{m}\in\RR^{d}$, spanning $\RR^{d}$, such that each of these vectors is represented $\Omega\left(n\right)$ times as the direction of the ``typical effect'' of changing the status of some $\xi_{i}$.

\begin{thm}
\label{thm:rough-halasz}Fix real numbers $0<p<1$ and $0<\varepsilon<1$, integers $m\geq d\geq 1$, and vectors $\boldsymbol{v}_{1},\dots,\boldsymbol{v}_{m}\in\RR^{d}$ spanning $\RR^d$. Then there is a constant $c>0$, depending on $p$, $\varepsilon$, $d$ and $\boldsymbol{v}_{1},\dots,\boldsymbol{v}_{m}$, such that the following holds. For any positive integer $n$ and any function $\boldsymbol{f}:\left\{ 0,1\right\} ^{n}\to\RR^{d}$,
let $\x\sim\Ber\left(p\right)^{n}$ and for $i=1,\dots,n$ define the random variables
\[\Delta_{i}\boldsymbol{f}\left(\x\right)  =\boldsymbol{f}\left(\xi_{1},\dots,\xi_{i-1},1,\xi_{i+1}.\dots,\xi_{n}\right)-\boldsymbol{f}\left(\xi_{1},\dots,\xi_{i-1},0,\xi_{i+1},\dots,\xi_{n}\right).\]
Suppose that for some positive real numbers $r$ and $s$ with $r\sqrt{n\log n}\ge s$ there are disjoint subsets
$I_{1},\dots,I_{m}\subseteq\left\{ 1,\dots,n\right\} $ of size at least $\varepsilon n$ such that for each $i\in I_{j}$ we have $\Pr\left(\left\Vert \Delta_{i}\boldsymbol{f}\left(\x\right)-s\boldsymbol{v}_{j}\right\Vert _{\infty}\ge r\right)\le n^{-6d^2}$. Then for any $\boldsymbol{x}\in\RR^{d}$ we have
\[
\Pr\left(\left\Vert \boldsymbol{f}\left(\x\right)-\boldsymbol{x}\right\Vert _{\infty}<r\sqrt{n\log n}\right)\leq c\cdot \left(\frac{r\sqrt{\log n}}{s}\right)^{d}.
\]
\end{thm}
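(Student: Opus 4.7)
The plan is to decompose $\boldsymbol{f}(\x)=\boldsymbol{g}(\x)+\boldsymbol{h}(\x)$, where $\boldsymbol{g}(\x)=s\sum_{j=1}^{m}N_{j}(\x)\boldsymbol{v}_{j}$ (with $N_{j}(\x)=|\{i\in I_{j}:\xi_{i}=1\}|$) is the pure linear part and $\boldsymbol{h}$ is a noise term, and then bound each part separately. Since $\boldsymbol{v}_{1},\dots,\boldsymbol{v}_{m}$ span $\RR^{d}$, I would first select $d$ of them forming a basis of $\RR^{d}$ and apply a linear change of coordinates (absorbed into $c$) so that $\boldsymbol{v}_{j}=\boldsymbol{e}_{j}$ for $j\le d$. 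In this normalised form the $k$-th coordinate of $\boldsymbol{g}$ (for $k\le d$) is $g_{k}(\x)=sN_{k}(\x)+\sum_{j>d}s(\boldsymbol{v}_{j})_{k}N_{j}(\x)$.

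The anticoncentration of $\boldsymbol{g}(\x)$ on scale $R=r\sqrt{n\log n}$ is straightforward. Conditioning on $N_{d+1}(\x),\dots,N_{m}(\x)$, each coordinate $g_{k}(\x)$ with $k\le d$ becomes an affine function $sN_{k}(\x)+c_{k}$ of a single independent binomial $N_{k}(\x)\sim\Bin(|I_{k}|,p)$, and the classical local CLT bound $\Pr(\Bin(|I_{k}|,p)=k_{0})=O(1/\sqrt{n})$ yields $\Pr(|g_{k}(\x)-y_{k}|<O(R))=O(R/(s\sqrt{n}))$. Independence of $N_{1},\dots,N_{d}$ then gives the product bound $\Pr(\|\boldsymbol{g}(\x)-\boldsymbol{y}\|_{\infty}<O(R))=O((R/(s\sqrt{n}))^{d})=O((r\sqrt{\log n}/s)^{d})$ for any $\boldsymbol{y}\in\RR^{d}$.

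To control the noise $\boldsymbol{h}$, I would use the Doob martingale $M_{k}=\E[\boldsymbol{h}(\x)\mid\xi_{1},\dots,\xi_{k}]$, whose increment is $(\xi_{k}-p)\bigl(\E_{\xi_{>k}}[\Delta_{k}\boldsymbol{f}(\x)\mid\xi_{\le k-1}]-s\boldsymbol{v}_{j(k)}\bigr)$, where $j(k)$ is the unique index with $k\in I_{j(k)}$. Because the inner expectation is taken over genuinely Bernoulli-distributed tail variables, the hypothesis applies: a Markov argument (using the strong tail $n^{-6d^{2}}$) shows that for all but a negligible fraction of prefixes $\xi_{\le k-1}$, this inner quantity is $O(r)$ in $\ell_{\infty}$-norm. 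An Azuma-type concentration then yields $\|\boldsymbol{h}(\x)-\E\boldsymbol{h}\|_{\infty}=O(R)$ off a set of probability much smaller than $(r\sqrt{\log n}/s)^{d}$. The theorem follows by a union bound on whether the noise is large: $\Pr(\|\boldsymbol{f}(\x)-\boldsymbol{x}\|_{\infty}<R)\le\Pr(\|\boldsymbol{h}-\E\boldsymbol{h}\|_{\infty}>R)+\Pr(\|\boldsymbol{g}(\x)-(\boldsymbol{x}-\E\boldsymbol{h})\|_{\infty}<O(R))$, both terms of which are bounded as above.

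The hardest part will be the noise estimate. The hypothesis is only an ``in probability'' bound on $\Delta_{i}\boldsymbol{f}$ at a true $\Ber(p)^{n}$ sample, so (a) a naive ``path from $\mathbf{0}$ to $\x$'' decomposition would evaluate derivatives at partial, non-Bernoulli configurations where the hypothesis does not apply, and (b) there is no worst-case control on $\Delta_{i}\boldsymbol{f}$ on the low-probability ``atypical'' event. The Doob formulation addresses (a) by always conditioning on Bernoulli tails, and (b) can be handled by truncating $\Delta_{i}\boldsymbol{f}$ at its typical size $O(s)$ and separately bounding the probability that the truncation is activated; the extremely strong exponent $6d^{2}$ in the hypothesis is calibrated precisely so that each Markov/union-bound step can afford to lose a polynomial factor while the final tally still comfortably beats the target $(r\sqrt{\log n}/s)^{d}$.
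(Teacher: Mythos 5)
Your high-level architecture (anticoncentration of a linear part $\boldsymbol g(\x)=s\sum_j N_j(\x)\boldsymbol v_j$ plus concentration of a remainder $\boldsymbol h=\boldsymbol f-\boldsymbol g$) is a genuinely different route from the paper's, which conditions on all randomness except the counts $|\x^j|$ for $j\le d$, couples the conditional distributions via random bijections $\sigma_j:[n_j]\to I_j$, and shows that along chains $(t_1,\dots,t_d)\mapsto(t_1,\dots,t_{j}+1,\dots,t_d)$ the value of $\boldsymbol f$ moves approximately along the lattice generated by $s\boldsymbol v_1,\dots,s\boldsymbol v_d$, so that a lattice-point count bounds the number of good tuples. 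Your treatment of $\boldsymbol g$ itself is fine and is essentially the same binomial local-CLT input the paper uses.

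However, the concentration claim for $\boldsymbol h$ has a genuine gap. The hypothesis controls $\Delta_i\boldsymbol f$ only for $i\in I_1\cup\dots\cup I_m$, and this union can be a proper subset of $[n]$ (the $I_j$ are merely disjoint sets of size at least $\varepsilon n$); on the remaining coordinates $\boldsymbol f$ may depend arbitrarily. For instance, with $m=d=1$, $I_1=\{1,\dots,\lfloor n/2\rfloor\}$ and $\boldsymbol f(\x)=s\sum_{i\in I_1}\xi_i\boldsymbol v_1+W\xi_n\boldsymbol v_1$ for enormous $W$, the hypothesis holds with any $r$, yet $\boldsymbol h(\x)=W\xi_n\boldsymbol v_1$ is never within $O(R)$ of its mean. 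Your increment formula, which assigns every index $k$ to some $I_{j(k)}$, silently assumes this cannot happen. The theorem survives because $\boldsymbol g$ stays anticoncentrated conditionally on those coordinates, so the fix is to condition on $(\xi_i)_{i\notin I_1\cup\dots\cup I_d}$ (after discarding $\boldsymbol v_{d+1},\dots,\boldsymbol v_m$) and prove a conditional concentration statement --- which is precisely the conditioning the paper builds in from the start. Even then a second issue remains: your martingale increments involve conditional expectations $\E_{\xi_{>k}}[\Delta_k\boldsymbol f-s\boldsymbol v_j]$, and on the exceptional event of probability $\le n^{-6d^2}$ the quantity $\Delta_k\boldsymbol f$ is completely unbounded, so these expectations need not be $O(r)$ even for typical prefixes. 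Truncating the increments does not by itself restore the telescoping identity with $\boldsymbol h(\x)-\E\boldsymbol h$; one must construct a modified function agreeing with $\boldsymbol f$ with high probability and having worst-case bounded differences. The paper avoids taking expectations of increments altogether: it union-bounds, over all $O(n^{d+1})$ relevant (index, tuple) pairs, the event that a single increment deviates, transferring the hypothesis to the conditional measure by dividing by $\prod_j\Pr\left(|\x^j|=t_j\right)\ge n^{-2d^2}$ --- this is the actual role of the exponent $6d^2$.
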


Roughly speaking, the assumption on the function $\boldsymbol{f}$ in \cref{thm:rough-halasz} means the following: For each of the vectors $\boldsymbol{v}_j$ (with $1\leq j\leq m$) there is a reasonably large subset $I_j\su \{1,\dots,n\}$, such that for every $i\in I_j$ the following holds. When changing the $i$-th coordinate of $\x$ from $0$ to $1$ the corresponding vectors $\boldsymbol{f}(\x)$ typically differ by roughly $s\boldsymbol{v}_j$ (more precisely, with high probability the difference of the corresponding vectors $\boldsymbol{f}(\x)$ is close to the vector $s\boldsymbol{v}_j$). This condition can be seen as some sort of ``almost-linearity'' (at least with respect to certain coordinates of $\x$). Intuitively, this condition suggests that for a random vector $\x\sim\Ber\left(p\right)^{n}$, the vector $\boldsymbol{f}(\x)$ must be reasonably spread out and not too concentrated close to any given $\boldsymbol{x}\in\RR^{d}$. \cref{thm:rough-halasz} makes this intuition precise.
The precise bound of $n^{-6d^2}$ for the probability $\Pr\left(\left\Vert \Delta_{i}\boldsymbol{f}\left(\x\right)-s\boldsymbol{v}_{j}\right\Vert _{\infty}\ge r\right)$ in the assumptions of \cref{thm:rough-halasz} was chosen for convenience in the proof of the theorem. The exponent $6d^2$ is certainly not sharp, but the precise value of this exponent is not relevant for the remainder of this paper, so we made no effort to optimise the exponent at the cost of complicating the proof of \cref{thm:rough-halasz}.

As mentioned above, \cref{thm:rough-halasz} can be considered to be an analogue of Hal\'asz' classical anticoncentration inequality for linear vector-valued functions \cite{Hal77} (satisfying a certain non-degeneracy condition), in the weaker setting of ``almost-linear'' functions. However, our non-degeneracy condition is somewhat more restrictive than the one in Hal\'asz' original result. We also remark that an inequality very similar to Halasz' was also proved by Tao and Vu~\cite[Theorem~1.4]{TV12}, and that there is a large body of work proving similar results without a non-degeneracy condition (in which case the bounds are much weaker; see for example the survey in \cite[Section~2]{NV13}).

Before starting the proof of \cref{thm:rough-halasz}, we record the following basic fact about lattices.

\begin{lem}\label{claim:lattice}
Fix a basis $\boldsymbol{v}_{1},\dots,\boldsymbol{v}_{d}\in\RR^{d}$ of $\RR^d$. There exists a constant $c'$, only depending on $\boldsymbol{v}_{1},\dots,\boldsymbol{v}_{d}$, such that for any $\boldsymbol{x'}\in \RR^{d}$ and any real number $z\geq 1$ there are at most $c'\cdot z^d$ different $d$-tuples of integers $(t_1,\dots,t_d)\in\ZZ^d$ with $\left\Vert (t_1\boldsymbol{v}_{1}+\dots+t_d\boldsymbol{v}_{d})-\boldsymbol{x'}\right\Vert _{\infty}<z$.
\end{lem}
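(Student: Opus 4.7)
The plan is to exploit the fact that the map $T:\RR^d\to\RR^d$ defined by $T(t_1,\dots,t_d)=t_1\boldsymbol{v}_1+\dots+t_d\boldsymbol{v}_d$ is a linear isomorphism (since $\boldsymbol{v}_1,\dots,\boldsymbol{v}_d$ form a basis). The integer tuples we want to count are exactly the integer points $\boldsymbol{t}\in\ZZ^d$ whose image $T(\boldsymbol{t})$ lies in the open $\ell_\infty$-ball of radius $z$ around $\boldsymbol{x}'$. Changing coordinates via $T^{-1}$, these are the integer points lying in the preimage of this $\ell_\infty$-ball, and the plan is to bound the diameter of this preimage and then apply a standard volume-packing argument.

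Concretely, I would first choose $\boldsymbol{t}_0=T^{-1}(\boldsymbol{x}')$ (not necessarily integral) and note that $\|T(\boldsymbol{t})-\boldsymbol{x}'\|_\infty<z$ implies $\|T(\boldsymbol{t}-\boldsymbol{t}_0)\|_2\le \sqrt{d}\cdot z$, and hence, setting $M=\|T^{-1}\|_{\mathrm{op}}$ (a constant depending only on $\boldsymbol{v}_1,\dots,\boldsymbol{v}_d$), that $\|\boldsymbol{t}-\boldsymbol{t}_0\|_2\le M\sqrt{d}\cdot z$. Thus every admissible integer tuple lies in a Euclidean ball $B$ of radius $R:=M\sqrt{d}\cdot z$ centred at $\boldsymbol{t}_0$.

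To count integer points in $B$, I would use the classical trick of inflating the ball: to each integer point $\boldsymbol{t}\in B$ assign the unit cube $\boldsymbol{t}+[-1/2,1/2]^d$. These cubes are pairwise interior-disjoint and contained in the enlarged ball $B'$ of radius $R+\sqrt{d}/2$, so the number of integer points is at most $\mathrm{vol}(B')=\omega_d(R+\sqrt{d}/2)^d$, where $\omega_d$ is the volume of the unit $d$-ball. Since $z\ge 1$, we have $R+\sqrt{d}/2\le (M\sqrt{d}+\sqrt{d}/2)\cdot z$, giving a bound of the form $c'\cdot z^d$ with $c'=\omega_d(M\sqrt{d}+\sqrt{d}/2)^d$, which depends only on the basis $\boldsymbol{v}_1,\dots,\boldsymbol{v}_d$.

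There is no real obstacle here: the entire statement is a soft consequence of the equivalence of norms in finite dimensions, plus the elementary volume bound for integer points in a ball. The only care needed is to make sure the constant $c'$ depends only on the basis and not on $\boldsymbol{x}'$ or $z$, which is automatic from the argument above since $M=\|T^{-1}\|_{\mathrm{op}}$ is determined entirely by $\boldsymbol{v}_1,\dots,\boldsymbol{v}_d$.
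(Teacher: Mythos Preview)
Your argument is correct. The paper does not actually give a proof of this lemma; it simply records it as ``a basic fact about lattices'' and moves on, so there is nothing substantive to compare against. Your change-of-coordinates plus volume-packing argument is exactly the standard way to justify such a fact, and you have correctly ensured that the constant $c'$ depends only on the basis (via $M=\|T^{-1}\|_{\mathrm{op}}$ and $d$) and not on $\boldsymbol{x}'$ or $z$.
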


We next prove \cref{thm:rough-halasz}, further developing the ideas in the proof of \cite[Theorem~1.2]{FKS1}. As mentioned above, the assumption on $\boldsymbol{f}$ in \cref{thm:rough-halasz} means that when changing the $i$-th coordinate of $\x$ from zero to one for some $i\in I_j$, the vector $\boldsymbol{f}(\x)$ typically changes by roughly $s\boldsymbol{v}_j$. This means that, if we successively change appropriately chosen zeros to ones in $\x$, we can (with sufficiently high probability) control the changes of the vector $\boldsymbol{f}(\x)$, and show that $\boldsymbol{f}(\x)$ cannot be too often close to any given vector $\boldsymbol{x}\in \RR^d$. Indeed, if we change the coordinates of $\x$ with indices in $I_1\cup \dots\cup I_m$, then the vector $\boldsymbol{f}(\x)$ will typically move roughly along a lattice spanned by the vectors $\boldsymbol{v}_{1},\dots,\boldsymbol{v}_{m}$, and we can use \cref{claim:lattice} to show that there are not too many choices for $\xi$ where $\left\Vert\boldsymbol f(\boldsymbol{\chi})-\boldsymbol x\right\Vert_{\infty}<r\sqrt{n\log n}$.

\begin{proof}[Proof of \cref{thm:rough-halasz}]
First, by relabelling the given vectors $\boldsymbol{v}_{1},\dots,\boldsymbol{v}_{m}\in\RR^{d}$, we may assume that $\boldsymbol{v}_{1},\dots,\boldsymbol{v}_{d}$ form a basis of $\RR^{d}$. We ignore the other vectors $\boldsymbol{v}_{d+1},\dots,\boldsymbol{v}_{m}$ (in other words, we may assume that $m=d$).

Now, for $j=1,\dots, d$, let $n_j=|I_{j}|$, so $\varepsilon n\leq n_j\leq n$. Let $\x^{j}=\left(\xi_{i}\right)_{i\in I_{j}}$ be the restriction of the random vector $\x\sim \Ber\left(p\right)^{n}$ to the coordinates in $I_j$. Observe that the number $\vert \x^{j}\vert$ of ones in $\x^{j}$ is binomially distributed with parameters $n_j$ and $p$. Therefore, for any $0\leq t\leq n_j$ we have that
\begin{equation}\label{eq:halasz1}
\Pr\left(\left|\x^{j}\right|=t\right)\leq c_p\cdot n_j^{-1/2}
\le (c_p/\sqrt{\varepsilon})\cdot n^{-1/2}
\end{equation}
for a constant $c_p>0$ only depending on $p$.

Now, observe that $\Pr\left(\left|\x^{j}\right|=t\right)$ is an increasing function in $t$ for $t\leq pn_j$ and a decreasing function for $t\geq pn_j$. Thus, for each $j=1,\dots, d$, there are integers $0\leq a_j\leq b_j\leq n_j$ such that for any integer $t$ we have
\begin{equation}\label{eq:halasz0}
\Pr\left(\left|\x^{j}\right|=t\right)> n^{-2d}\text{ if and only if }a_j\le t\leq b_j.
\end{equation}
That is to say, $a_j$ and $b_j$ are defined as the boundaries of the range of values  that have probability at least $n^{-2d}$ of occurring as $\left|\x^{j}\right|$. We next bound the difference $b_j-a_j$.

The Chernoff bound (see for example \cite[Theorem A.1.4]{alon-spencer}) yields $\Pr\left(\left|\x^{j}\right|< pn_j-d\sqrt{n\log n}\right)\le n^{-2d^2}\le n^{-2d}$ and $\Pr\left(\left|\x^{j}\right|> pn_j+d\sqrt{n\log n}\right)\le n^{-2d^2}\le n^{-2d}$. Thus, we must have
\[ pn_j-d\sqrt{n\log n}\le a_j\le b_j\le pn_j+d\sqrt{n\log n}\]
and in particular $b_j-a_j\leq 2d\sqrt{n\log n}$ for each $j=1,\dots, d$. By the choice of $a_j$ and $b_j$ we have
\begin{equation}\label{eq:halasz2}
\Pr\left(\left|\x^{j}\right|< a_j\text{ or }\left|\x^{j}\right|>b_j\right)=\sum_{t=0}^{a_j-1} \Pr\left(\left|\x^{j}\right|=t\right)+\sum_{t=b_j+1}^{n^j} \Pr\left(\left|\x^{j}\right|=t\right)\le n\cdot n^{-2d}= n^{-2d+1}
\end{equation}
for every $j=1,\dots, d$.

Now, for each $j=1,\dots, d$, let $\sigma_{j}:[n_{j}] \to I_{j}$ be a uniformly random bijection (independently chosen for each $j$). Also, independently sample $\chi_i\sim\Ber\left(p\right)$ for each $i\in [n]\sm(I_1\cup\dots\cup I_d)$. For any integers $t_1,\dots,t_d\in [a_1,b_1]\times \dots \times [a_d,b_d]$, let the vector $\boldsymbol{\chi}(t_1,\dots,t_d)\in \lbrace 0,1\rbrace^n$ be defined as follows. For $i\in [n]\sm(I_1\cup\dots\cup I_d)$, we already chose $\chi_i$, the $i$-th entry of $\boldsymbol{\chi}(t_1,\dots,t_d)$. If $i\in I_j$, then set $\chi_i=1$ if and only if $i\in \sigma_{j}([t_i])$. In other words, among the entries $\chi_i$ for $i\in I_j$ there are precisely $t_j$ ones and those are in positions $\sigma_{j}\left(1\right),\dots,\sigma_{j}\left(t_{j}\right)$.

For any given $(t_1,\dots,t_d)\in [a_1,b_1]\times \dots \times [a_d,b_d]$, the random vector $\boldsymbol{\chi}(t_1,\dots,t_d)$ depends on the choices of the bijections $\sigma_j$ for $j=1,\dots, d$ and the random entries $\chi_i\sim\Ber\left(p\right)$ for $i\in [n]\sm(I_1\cup\dots\cup I_d)$. Very importantly, the distribution of $\boldsymbol{\chi}(t_1,\dots,t_d)$ is the same as the distribution of the random vector $\x\sim\Ber\left(p\right)^{n}$ in the theorem statement conditioned on having $\left|\x^{j}\right|=t_j$ for $j=1,\dots,d$. In particular, fixing any $\boldsymbol x\in \RR^d$, we have
\begin{multline*}\Pr\left(\left\Vert \boldsymbol{f}(\x)-\boldsymbol{x}\right\Vert _{\infty}<r\sqrt{n\log n}\,\,\bigg\vert\,\, \left|\x^{j}\right|=t_j\text{ for }j=1,\dots,d\right)\\
=\Pr\left(\left\Vert \boldsymbol{f}\left(\boldsymbol{\chi}(t_1,\dots,t_d)\right)-\boldsymbol{x}\right\Vert _{\infty}<r\sqrt{n\log n}\right).
\end{multline*}
Hence, using the independence of the random variables $|\x^{1}|, \dots, |\x^{d}|$ as well as \cref{eq:halasz1}, we obtain
\begin{multline*}\Pr\left(\left\Vert \boldsymbol{f}(\x)-\boldsymbol{x}\right\Vert _{\infty}<r\sqrt{n\log n}\text{ and }\left|\x^{j}\right|=t_j\text{ for }j=1,\dots,d\right)\\
=\Pr\left(\left\Vert \boldsymbol{f}\left(\boldsymbol{\chi}(t_1,\dots,t_d)\right)-\boldsymbol{x}\right\Vert _{\infty}<r\sqrt{n\log n}\right)\cdot \prod_{j=1}^{d}\Pr\left(\left|\x^{j}\right|=t_j\right)\\
\leq (c_p/\sqrt{\varepsilon})^d\cdot n^{-d/2}\cdot \Pr\left(\left\Vert \boldsymbol{f}\left(\boldsymbol{\chi}(t_1,\dots,t_d)\right)-\boldsymbol{x}\right\Vert _{\infty}<r\sqrt{n\log n}\right)
\end{multline*}
for each $(t_1,\dots,t_d)\in [a_1,b_1]\times \dots \times [a_d,b_d]$.
On the other hand, \cref{eq:halasz2} implies
\[\Pr\left(\left|\x^{j}\right|< a_j\text{ or }\left|\x^{j}\right|>b_j\text { for some }1\leq j\leq d\right)\leq d\cdot n^{-2d+1}\leq n^{-d}.\]
(Note that to have $\vert I_j\vert\geq \varepsilon n>0$ for $j=1,\dots,d$, we must have $d\le n$). Thus, we obtain
\begin{multline*}
\Pr\left(\left\Vert \boldsymbol{f}(\x)-\boldsymbol{x}\right\Vert _{\infty}<r\sqrt{n\log n}\right)\\
\leq n^{-d}+\sum_{(t_1,\dots,t_d)}\Pr\left(\left\Vert \boldsymbol{f}(\x)-\boldsymbol{x}\right\Vert _{\infty}<r\sqrt{n\log n}\text{ and }\left|\x^{j}\right|=t_j\text{ for }j=1,\dots,d\right)\\
\leq n^{-d}+ (c_p/\sqrt{\varepsilon})^d\cdot n^{-d/2}\cdot\sum_{(t_1,\dots,t_d)}\Pr\left(\left\Vert \boldsymbol{f}\left(\boldsymbol{\chi}(t_1,\dots,t_d)\right)-\boldsymbol{x}\right\Vert _{\infty}<r\sqrt{n\log n}\right),
\end{multline*}
where the sum is taken over all $(t_1,\dots,t_d)\in [a_1,b_1]\times \dots \times [a_d,b_d]$. In other words,
\begin{equation}\label{eq:halasz3}
\Pr\left(\left\Vert \boldsymbol{f}(\x)-\boldsymbol{x}\right\Vert _{\infty}<r\sqrt{n\log n}\right)\leq  n^{-d}+ (c_p/\sqrt{\varepsilon})^d\cdot n^{-d/2}\cdot\E Y,
\end{equation}
where $Y$ is the random variable counting the number of $d$-tuples $(t_1,\dots,t_d)\in [a_1,b_1]\times \dots \times [a_d,b_d]$ with $\left\Vert \boldsymbol{f}\left(\boldsymbol{\chi}(t_1,\dots,t_d)\right)-\boldsymbol{x}\right\Vert _{\infty}<r\sqrt{n\log n}$. This random variable $Y$ depends on the random choices of $\sigma_j$ for $j=1,\dots, d$ and on $\chi_i\sim\Ber\left(p\right)$ for $i\in [n]\sm(I_1\cup\dots\cup I_d)$.

Note that we always have $Y\leq (n_1+1)\dotsm (n_d+1)\leq (2n)^d$. Furthermore, recall that the distribution of $\boldsymbol{\chi}(t_1,\dots,t_d)$ is the same as the distribution of $\x\sim\Ber\left(p\right)^{n}$ conditioned on having $\left|\x^{j}\right|=t_j$ for $j=1,\dots,d$. This implies that for any $1\leq j^*\leq d$, any $i\in I_{j^*}$ and any $(t_1,\dots,t_d)\in [a_1,b_1]\times \dots \times [a_d,b_d]$, we have
\begin{multline*}
\Pr\left(\left\Vert \Delta_{i}\boldsymbol{f}\left(\boldsymbol{\chi}(t_1,\dots,t_d)\right)-s\boldsymbol{v}_{j^*}\right\Vert _{\infty}\ge r\right)=\Pr\left(\left\Vert \Delta_{i}\boldsymbol{f}\left(\boldsymbol{\x}\right)-s\boldsymbol{v}_{j^*}\right\Vert _{\infty}\ge r\,\,\bigg\vert\,\, \left|\x^{j}\right|=t_j\text{ for }j=1,\dots,d\right)\\
\leq \Pr\left(\left\Vert \Delta_{i}\boldsymbol{f}\left(\boldsymbol{\x}\right)-s\boldsymbol{v}_{j^*}\right\Vert _{\infty}\ge r\right)\cdot \prod_{j=1}^{d}\Pr\left(\left|\x^{j}\right|=t_j\right)^{-1}\leq n^{-6d^2}\cdot \prod_{j=1}^{d} n^{2d}\leq n^{-4d^2}\le n^{-2d-2},
\end{multline*}
where we used \cref{eq:halasz0} and the assumption in the theorem that $\Pr\left(\left\Vert \Delta_{i}\boldsymbol{f}\left(\boldsymbol{\x}\right)-s\boldsymbol{v}_{j^*}\right\Vert _{\infty}\ge r\right)\leq n^{-6d^2}$. Thus, by a union bound, the probability that there exist $1\leq j^*\leq d$, $i\in I_{j^*}$ and $(t_1,\dots,t_d)\in [a_1,b_1]\times \dots \times [a_d,b_d]$ with $\left\Vert \Delta_{i}\boldsymbol{f}\left(\boldsymbol{\chi}(t_1,\dots,t_d)\right)-s\boldsymbol{v}_{j^*}\right\Vert _{\infty}\ge r$, is at most $n^{-2d-2}\cdot d\cdot n\cdot n^{d}\le n^{-d}$.

Now, fix $c'>0$, only depending on $\boldsymbol{v}_{1},\dots,\boldsymbol{v}_{d}\in\RR^{d}$, as in \cref{claim:lattice}.

\begin{claim}\label{claim:Y-upper-bound}
If $\left\Vert \Delta_{i}\boldsymbol{f}\left(\boldsymbol{\chi}(t_1,\dots,t_d)\right)-s\boldsymbol{v}_{j^*}\right\Vert _{\infty}\le r$ for all $1\leq j^*\leq d$, all $i\in I_{j^*}$ and all $(t_1,\dots,t_d)\in [a_1,b_1]\times \dots \times [a_d,b_d]$, then we have $Y\leq c'\cdot (2d^2+1)^d\cdot (r\sqrt{n\log n}/s)^d$
\end{claim}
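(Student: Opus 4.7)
The plan is to exploit the telescoping structure: moving the tuple $(t_1,\dots,t_d)$ by one unit in the $j$-th coordinate corresponds to flipping exactly one entry of $\boldsymbol{\chi}$ (namely the entry indexed by $\sigma_j(t_j+1)\in I_j$) from $0$ to $1$, and by the hypothesis of the claim the resulting change in $\boldsymbol{f}$ differs from $s\boldsymbol{v}_j$ in $\ell_\infty$-norm by at most $r$. So I would first set $\tilde{t}_j=t_j-a_j\in[0,b_j-a_j]$, recall that $b_j-a_j\le 2d\sqrt{n\log n}$, and observe that starting from the base tuple $(a_1,\dots,a_d)$ one can reach any $(t_1,\dots,t_d)\in[a_1,b_1]\times\cdots\times[a_d,b_d]$ by performing $\tilde{t}_1+\cdots+\tilde{t}_d\le 2d^2\sqrt{n\log n}$ single-coordinate increments (say, first exhausting coordinate $1$, then coordinate $2$, and so on). Each intermediate tuple still lies in $[a_1,b_1]\times\cdots\times[a_d,b_d]$, so the claim's hypothesis applies at every step.

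Next I would write the telescoping identity: $\boldsymbol{f}(\boldsymbol{\chi}(t_1,\dots,t_d))-\boldsymbol{f}(\boldsymbol{\chi}(a_1,\dots,a_d))$ equals a sum of $\tilde{t}_1+\cdots+\tilde{t}_d$ terms of the form $\Delta_i\boldsymbol{f}$ evaluated at intermediate tuples, where the $i$'s used for the $j$-th batch lie in $I_j$. Replacing each such term by $s\boldsymbol{v}_j$ and using the $\ell_\infty$-bound of $r$ for each replacement, the triangle inequality gives
\[
\Bigl\|\boldsymbol{f}(\boldsymbol{\chi}(t_1,\dots,t_d))-\boldsymbol{f}(\boldsymbol{\chi}(a_1,\dots,a_d))-s\sum_{j=1}^{d}\tilde{t}_j\boldsymbol{v}_j\Bigr\|_\infty\le r\sum_{j=1}^{d}\tilde{t}_j\le 2d^2 r\sqrt{n\log n}.
\]

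Now if $(t_1,\dots,t_d)$ is counted by $Y$, i.e.\ $\|\boldsymbol{f}(\boldsymbol{\chi}(t_1,\dots,t_d))-\boldsymbol{x}\|_\infty<r\sqrt{n\log n}$, then applying the triangle inequality once more yields
\[
\Bigl\|\sum_{j=1}^{d}\tilde{t}_j\boldsymbol{v}_j-\boldsymbol{y}\Bigr\|_\infty<(2d^2+1)\,\frac{r\sqrt{n\log n}}{s},
\]
where $\boldsymbol{y}=\bigl(\boldsymbol{x}-\boldsymbol{f}(\boldsymbol{\chi}(a_1,\dots,a_d))\bigr)/s$ depends only on the randomness already fixed (not on $(t_1,\dots,t_d)$). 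Since $r\sqrt{n\log n}\ge s$ by assumption, the right-hand side is at least $1$, so \cref{claim:lattice} applied with the basis $\boldsymbol{v}_1,\dots,\boldsymbol{v}_d$ and with $z=(2d^2+1)r\sqrt{n\log n}/s$ bounds the number of integer tuples $(\tilde{t}_1,\dots,\tilde{t}_d)$ satisfying this inequality by $c'\cdot(2d^2+1)^d(r\sqrt{n\log n}/s)^d$, which is exactly the desired bound on $Y$.

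The only real content is the telescoping; I expect no obstacle beyond bookkeeping, since the ordering of increments is chosen so that all intermediate tuples stay in the box $[a_1,b_1]\times\cdots\times[a_d,b_d]$ where the claim's hypothesis supplies the needed $\ell_\infty$-estimate on $\Delta_i\boldsymbol{f}$. The factor $(2d^2+1)^d$ comes from combining the telescoping error $2d^2 r\sqrt{n\log n}$ with the original $r\sqrt{n\log n}$ slack in the definition of $Y$, and the factor $c'$ and the $d$-th power of $r\sqrt{n\log n}/s$ come directly from \cref{claim:lattice}.
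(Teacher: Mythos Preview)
Your proposal is correct and follows essentially the same approach as the paper: telescope from the base tuple $(a_1,\dots,a_d)$ via single-coordinate increments, use the hypothesis at each step to bound the accumulated error by $2d^2 r\sqrt{n\log n}$, combine with the $r\sqrt{n\log n}$ slack from the definition of $Y$, divide by $s$, and apply \cref{claim:lattice}. The only cosmetic difference is that the paper applies \cref{claim:lattice} to the unshifted tuples $(t_1,\dots,t_d)$ (absorbing the shift by $a_j\boldsymbol{v}_j$ into $\boldsymbol{x}'$), whereas you apply it to $(\tilde t_1,\dots,\tilde t_d)$; this is immaterial.
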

\begin{proof}
Note that for any $1\leq j^*\leq d$ and any $(t_1,\dots,t_d)\in [a_1,b_1]\times \dots \times [a_d,b_d]$ with $t_{j^*}<b_{j^*}$, the vectors $\boldsymbol{\chi}(t_1,\dots,t_{j^*-1}, t_{j^*}+1, t_{j^*+1},\dots,t_d)$ and $\boldsymbol{\chi}(t_1,\dots,t_d)$ only differ in that the first of these vectors has a one in position $\sigma_{j^*}(t_{j^*}+1)$, while the second has a zero in that position. Hence
\[\boldsymbol{f}(\boldsymbol{\chi}(t_1,\dots,t_{j^*-1}, t_{j^*}+1, t_{j^*+1},\dots,t_d))-\boldsymbol{f}(\boldsymbol{\chi}(t_1,\dots,t_d))=\Delta_{\sigma_{j^*}(t_{j^*}+1)}\boldsymbol{f}(\boldsymbol{\chi}(t_1,\dots,t_d)).\]
As $\sigma_{j^*}(t_{j^*}+1)\in I_{j^*}$, under the assumptions of the claim this implies
\[\left\Vert \boldsymbol{f}(\boldsymbol{\chi}(t_1,\dots,t_{j^*-1}, t_{j^*}+1, t_{j^*+1},\dots,t_d))-\boldsymbol{f}(\boldsymbol{\chi}(t_1,\dots,t_d))-s\boldsymbol{v}_{j^*}\right\Vert _{\infty}\le r\]
for all $(t_1,\dots,t_d)\in [a_1,b_1]\times \dots \times [a_d,b_d]$ and all $1\leq j^*\leq d$ with $t_{j^*}<b_{j^*}$. Adding this up for different values of $(t_1,\dots,t_d)\in [a_1,b_1]\times \dots \times [a_d,b_d]$ and using the triangle inequality, this implies that
\begin{multline*}
\left\Vert \boldsymbol{f}(\boldsymbol{\chi}(t_1,\dots,,t_d))-\boldsymbol{f}(\boldsymbol{\chi}(a_1,\dots,a_d))-(t_1-a_1)s\boldsymbol{v}_{1}-\dots-(t_d-a_d)s\boldsymbol{v}_{d}\right\Vert _{\infty}\\
\le ((t_1-a_1)+\dots+(t_d-a_d))\cdot r\le 2d^2\sqrt{n\log n}\cdot r,
\end{multline*}
for all $(t_1,\dots,t_d)\in [a_1,b_1]\times \dots \times [a_d,b_d]$, where for the second inequality we used that $t_j-a_j\leq b_j-a_j\leq 2d\sqrt{n\log n}$ for each $1\leq j\leq d$.

Recall that $Y$ is the number of (integer) $d$-tuples $(t_1,\dots,t_d)\in [a_1,b_1]\times \dots \times [a_d,b_d]$ which satisfy $\left\Vert \boldsymbol{f}\left(\boldsymbol{\chi}(t_1,\dots,t_d)\right)-\boldsymbol{x}\right\Vert _{\infty}<r\sqrt{n\log n}$. For each such $d$-tuple we then have (by the triangle inequality)
\[\left\Vert \boldsymbol{x}-\boldsymbol{f}(\boldsymbol{\chi}(a_1,\dots,a_d))-(t_1-a_1)s\boldsymbol{v}_{1}-\dots-(t_d-a_d)s\boldsymbol{v}_{d}\right\Vert _{\infty}\\
< (2d^2+1)\sqrt{n\log n}\cdot r,\]
and therefore
\[\left\Vert t_1\boldsymbol{v}_{1}+\dots+t_d\boldsymbol{v}_{d}-a_1\boldsymbol{v}_{1}-\dots-a_d\boldsymbol{v}_{d}+\frac{1}{s}\boldsymbol{f}(\boldsymbol{\chi}(a_1,\dots,a_d)))-
\frac{1}{s}\boldsymbol{x}\right\Vert _{\infty}\\
< (2d^2+1)\cdot \frac{\sqrt{n\log n}\cdot r}{s}.\]
Thus by \cref{claim:lattice} applied with $\boldsymbol{x'}=a_1\boldsymbol{v}_{1}+\dots+a_d\boldsymbol{v}_{d}-\frac{1}{s}\boldsymbol{f}(\boldsymbol{\chi}(a_1,\dots,a_d)))+\frac{1}{s}\boldsymbol{x}$ as well as $z=(2d^2+1)\sqrt{n\log n}\cdot r/s$ (note that $z\geq 1$ as $r\sqrt{n\log n}\ge s$ by the assumptions of the theorem), we obtain that
\[Y\leq c'\cdot (2d^2+1)^d\cdot \left(\frac{\sqrt{n\log n}\cdot r}{s}\right)^d,\]
as desired.
\end{proof}

Just before \cref{claim:Y-upper-bound}, we proved that its assumptions are satisfied with probability at least $1-n^{-d}$. Thus, we obtain
\[\E Y\leq n^{-d}\cdot (2n)^d+c'\cdot (2d^2+1)^d\cdot \left(\frac{\sqrt{n\log n}\cdot r}{s}\right)^d\leq (c'+1)\cdot (2d^2+1)^d\cdot \left(\frac{\sqrt{n\log n}\cdot r}{s}\right)^d,\]
using that $r\sqrt{n\log n}\ge s$. Plugging this into \cref{eq:halasz3}, we can conclude
\begin{multline*}
\Pr\left(\left\Vert \boldsymbol{f}(\x)-\boldsymbol{x}\right\Vert _{\infty}<r\sqrt{n\log n}\right)\leq  n^{-d}+ (c_p/\sqrt{\varepsilon})^d\cdot n^{-d/2}\cdot (c'+1)\cdot (2d^2+1)^d\cdot \left(\frac{\sqrt{n\log n}\cdot r}{s}\right)^d\\
=n^{-d}+ (c_p/\sqrt{\varepsilon})^d\cdot (c'+1)\cdot (2d^2+1)^d\cdot \left(\frac{r\sqrt{\log n}}{s}\right)^d\leq ((c_p/\sqrt{\varepsilon})^d\cdot (c'+1)\cdot (2d^2+1)^d+1)\cdot \left(\frac{r\sqrt{\log n}}{s}\right)^d,
\end{multline*}
where in the last inequality we used that $r\sqrt{\log n}/ s\geq n^{-1/2}\geq n^{-1}$. This implies the statement of \cref{thm:rough-halasz} with $c=(c_p/\sqrt{\varepsilon})^d\cdot (c'+1)\cdot (2d^2+1)^d+1$.
\end{proof}

\section{Colour systems and the induction hypothesis}
\label{sec:induction-hypothesis}

As outlined in \cref{sec:outline}, our proof of \cref{thm:subgraph-counts} proceeds via induction over a class of random variables generalising subgraph counts. These random variables are defined via \emph{colour systems}.

\begin{defn}\label{def:colour-system}
For integers $g\geq 0$ and $a_1,\dots,a_g, t_1,\dots,t_g\geq 1$, a \emph{colour system} $\mathcal{G}$ with parameters $(g, a_1,\dots,a_g, t_1,\dots,t_g)$ is a multigraph without loops which is coloured according to the following rules.
\begin{itemize}
\item Each vertex has at most one colour and for each $1\leq i\leq g$, there are exactly $a_i$ vertices of colour $i$.
\item Each edge is incident to at least one coloured vertex.
\item Each edge has exactly one colour. If an edge is incident to exactly one coloured vertex, it receives the colour of that vertex. If an edge is incident to two coloured vertices, and these vertices have colours $i_1$ and $i_2$, then the edge has colour $\min(i_1,i_2)$.
\item Each edge of colour $i$ is additionally labelled with an integer in $\{1,\dots,t_i\}$ (we say that there are $t_i$ different \emph{shades}) of colour $i$. We do not assign shades to vertices, only edges.
\item Between any two vertices, there is at most one edge of each shade of each colour (but there can be multiple edges of different shades of the same colour).
\end{itemize}
The \emph{order} of a colour system $\mathcal{G}$ is its total number of vertices (both coloured and uncoloured).
\end{defn}

For a colour system $\mathcal{G}$, let $\operatorname{U}(\mathcal{G})$ be the set of uncoloured vertices of $\mathcal{G}$.
In most of the statements throughout the paper, we will consider colour systems whose order $n$ is large with respect to the parameters $g,a_1,\dots,a_g, t_1,\dots,t_g$ (in which case almost all vertices of $\mathcal{G}$ are uncoloured). In all of our statements involving asymptotic notation, the colour system parameters $g,a_1,\dots,a_g, t_1,\dots,t_g$ will be treated as fixed constants for the asymptotic notation, while the order $n$ tends to infinity.

To make some sense of the definition of a colour system, recall from the outline in \cref{sec:outline} that our proof is inductive, and at each step of the induction we consider multiple possibilities for the neighbourhoods of certain vertices. The $g$ colours in a colour system correspond to the vertices chosen at the $g$ different steps of the induction, and the $t_i$ different shades of colour $i$ correspond to $t_i$ different choices of neighbourhoods for the vertices of colour $i$.

Now, we will mostly want to consider colour systems which have ``typical'' structure, meaning that the sizes of intersections between neighbourhoods of vertices are about what one should expect if the neighbourhoods were chosen randomly. For this, we introduce a notion of ``general position'' for families of sets.

\begin{defn}\label{def:general-position}
Consider subsets $S_{1},\dots,S_{m}$ of some ground set $R$. For any subset $I\subseteq\left\{ 1,\dots,m\right\}$, we write $S_{I}=R\cap \bigcap_{i\in I}S_{i}\cap \bigcap_{i\notin I}(R\sm S_{i})$. For an integer $K\geq 1$ and some $0<p<1$, we say that $S_{1},\dots,S_{m}\subseteq R $
are in \emph{$(p, K)$-general position} if for each
$I\subseteq\left\{ 1,\dots,m\right\} $, we have
\[ \left|\left|S_{I}\right|-p^{\left|I\right|}\left(1-p\right)^{m-\left|I\right|}|R|\right|\le  K\cdot |R|^{1/2}\log |R|.\]
\end{defn}

Note that if $m=0$, then $S_\emptyset=R$ and therefore in this case the empty collection of sets is in $(p, K)$-general position for every integer $K\geq 1$.

\begin{defn}
Say that a colour system $\mathcal{G}$ is \emph{$p$-general} if the following holds. If we define $S_1,\dots,S_m\su \operatorname{U}(\mathcal{G})$ to be the $\operatorname{U}(\mathcal{G})$-neighbourhoods of each of the coloured vertices of $\mathcal{G}$ in each of the shades of the respective colour (so we have $m=a_1\cdot t_1+\dots+a_g\cdot t_g$ if $\mathcal{G}$ has parameters $(g, a_1,\dots,a_g, t_1,\dots,t_g)$), then the sets $S_1,\dots,S_m\su \operatorname{U}(\mathcal{G})$ are in $(p, 3^g)$-general position.

Furthermore, say that the colour system $\mathcal{G}$ is \emph{weakly $p$-general} if the sets $S_1,\dots,S_m\su \operatorname{U}(\mathcal{G})$ are in $(p, 2\cdot 3^{g})$-general position.
\end{defn}

Note that every $p$-general colour system is in particular weakly $p$-general (the reason for having both these definitions is that when we make small changes to a collection of sets in $(p,K)$-general position, the parameter $K$ changes slightly, and it is convenient to not have to explicitly keep track of this change). Also, note that for $g=0$, every colour system with parameters $(g, a_1,\dots,a_g, t_1,\dots,t_g)$ is $p$-general (since $m=0$ and the empty collection of sets is always in in $(p, K)$-general position for all $K\geq 1$).

Recall from \cref{sec:outline} that the whole point of introducing multiple vertices at each step of the induction is to allow for a richer range of possibilities for the effect of changing the status of an edge. In order to ensure the richest possible range of possibilities, we consider colour systems which are \emph{complete} in the sense that we see essentially all the possible adjacencies between the coloured vertices, as follows.

\begin{defn}\label{defn:complete}
Call a colour system $\mathcal{G}$ with parameters $(g, a_1,\dots,a_g, t_1,\dots,t_g)$ complete, if for any $1\leq i\leq g$ the following holds. Suppose for each $1\leq j\leq i-1$ and each vertex $v$ in $\mathcal{G}$ of colour $j$ we are given a subset $I_v\su [t_j]$. Then there exists a vertex $w$ of colour $i$ such that for every $1\leq j\leq i-1$ and every vertex $v$ of colour $j$ the vertices $w$ and $v$ are connected by edges of exactly those shades of colour $j$ that belong to the set $I_{v}$.
\end{defn}

Informally speaking, \cref{defn:complete} demands that for every colour $i$ we can find a vertex with prescribed edges to all the vertices of the previous (smaller) colours. For $g=0$ every colour system with parameters $(g, a_1,\dots,a_g, t_1,\dots,t_g)$ is complete, as the condition in \cref{defn:complete} is vacuous.

Note that whether a colour system $\mathcal{G}$ with parameters $(g, a_1,\dots,a_g, t_1,\dots,t_g)$ is complete only depends on the edges between the coloured vertices, and it does not at all depend on the edges in colour $g$. In contrast, whether $\mathcal{G}$ is $p$-general for given $0<p<1$ only depends on the edges between the coloured and the uncoloured vertices.

Now, to obtain a graph from a colour system, we choose a shade of each colour, and we choose a graph on the uncoloured vertices, as follows.

\begin{defn}\label{defn:psi}
Let $\mathcal{G}$ be a colour system with parameters $(g, a_1,\dots,a_g, t_1,\dots,t_g)$. Then, given a $g$-tuple $(j_1,\dots,j_g)\in [t_1]\times \dots\times [t_g]$, and a graph $G_0$ on the vertex set $\operatorname{U}(\mathcal{G})$, define a graph $\mathcal{G}(G_0, j_1,\dots,j_g)$ by taking all vertices of $\mathcal{G}$ together with all edges of $G_0$ and all edges of shade $j_i$ of colour $i$ for all $1\leq i\leq g$. Furthermore, for a graph $H$ let $\psi_H(\mathcal{G}, G_0, j_1,\dots,j_g)$ be the number of labelled copies of $H$ in the graph $\mathcal{G}(G_0, j_1,\dots,j_g)$ which use at least one vertex of each of the $g$ colours.
\end{defn}

We will use notation such as $\psi_H(\mathcal{G}, G_0,\cdot)$ to denote the function $[t_1]\times \dots\times [t_g]\to \ZZ$ that maps $(j_1,\dots,j_g)$ to $\psi_H(\mathcal{G}, G_0, j_1,\dots,j_g)$.
Our goal for the rest of this paper will be to prove the following strengthening of \cref{thm:subgraph-counts}.

\begin{thm}
\label{thm:general-subgraph-counts}Fix some $0<p<1$, an $h$-vertex graph $H$, and integers $0\leq g\leq h-1$ and $a_1,\dots,a_g,\allowbreak t_1,\dots,t_g\geq 1$. Let $T=t_1\dotsm t_g$. Then for any $p$-general complete colour system $\mathcal{G}$ of order $n$ with parameters $(g, a_1,\dots,a_g, t_1,\dots,t_g)$ and for any function $\lambda: [t_1]\times \dots\times [t_g]\to \ZZ$ the following holds. If $G_0\sim\GG(\operatorname{U}(\mathcal{G}), p)$ is a random graph on the vertex set $\operatorname{U}(\mathcal{G})$, then 
\[
\Pr\left(\vphantom{\sum}\psi_H(\mathcal{G}, G_0, \cdot)=\lambda\right)\le n^{(g-h+1)\cdot T+o(1)}.
\]
\end{thm}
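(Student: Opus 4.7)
The proof proceeds by downward induction on $g$. The base case $g = h-1$ is trivial: the target bound $n^{(g-h+1)T + o(1)} = n^{o(1)}$ holds since all probabilities are at most $1$. For the inductive step, assuming the statement for $g+1$, the plan is to enrich the colour system $\mathcal{G}$ by introducing a new colour $g+1$ with some $a_{g+1}$ vertices $v_1, \dots, v_{a_{g+1}} \in \operatorname{U}(\mathcal{G})$ and $t_{g+1}$ shades. Using the completeness of $\mathcal{G}$, the vertices $v_1, \dots, v_{a_{g+1}}$ are chosen to exhibit a diverse range of adjacency patterns to the already-coloured vertices, which (as indicated in \cref{sec:outline}) will be crucial for the later non-degeneracy step. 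The central decomposition is
\[
\psi_H(\mathcal{G}, G_0, j_1, \dots, j_g) = \alpha(j_1, \dots, j_g) + \beta(j_1, \dots, j_g),
\]
where $\alpha$ counts the labelled $H$-copies using at least one vertex of each colour $1, \dots, g$ but none of the $v_i$, and $\beta$ counts those also using at least one $v_i$. Writing $E_1$ for the $G_0$-edges not incident to any $v_i$ and $E_2$ for the remaining $G_0$-edges, $\alpha$ is a function of $E_1$ alone, while $\beta$ depends on both $E_1$ and $E_2$.

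Following the blueprint of the clique argument from \cite{FKS1}, the proof combines three ingredients. First, I would apply \cref{thm:rough-halasz} to the vector-valued random variable $\alpha(\cdot) \in \RR^T$, viewed as a function of the Bernoulli variables in $E_1$, to obtain a coarse anticoncentration bound on $\alpha$. The typical effect of flipping an edge $e \in E_1$ is a vector $\boldsymbol{\Delta}_e \alpha \in \RR^T$ whose components count certain $H$-copies through the endpoints of $e$; the non-degeneracy (spanning) hypothesis of \cref{thm:rough-halasz} is verified by exhibiting sufficiently many linearly independent typical-change directions, using the completeness of $\mathcal{G}$ together with the multi-vertex structure of colour $g+1$ (the $a_{g+1} > 1$ trick from \cref{sec:outline}). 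Second, a conditional concentration bound for $\beta(\cdot)$ given $E_1$: as a bounded-degree polynomial in the Bernoulli variables of $E_2$, each coordinate of $\beta$ is within $\widetilde{O}(n^{h-2})$ of its conditional mean $\E[\beta \mid E_1]$ with very high probability. Thus the event $\psi_H = \lambda$ forces $\alpha(E_1)$ into a small box around $\lambda - \E[\beta \mid E_1]$. Third, a conditional anticoncentration bound for $\beta$ given $E_1$, via a moment argument: interpreting $t_{g+1}$ independent realizations of the $v_i$-neighborhoods as the $t_{g+1}$ shades of the new colour $g+1$ in the enriched colour system $\mathcal{G}'$, the $t_{g+1}$-th moment of $\Pr(\beta(\cdot) = \lambda(\cdot) - \alpha(E_1)(\cdot) \mid E_1)$ corresponds to $\Pr(\psi_H(\mathcal{G}', G_0', \cdot) = \lambda')$ for an appropriate $\lambda'$, which the induction hypothesis bounds by $n^{(g-h+2) T t_{g+1} + o(1)}$.

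Multiplying the coarse anticoncentration for $\alpha$ with the conditional point probability bound for $\beta$ obtained from the moment argument, and restricting to the small box produced by the concentration of $\beta$, should yield the target bound $n^{(g-h+1)T + o(1)}$. The main technical obstacles I foresee are: (a) verifying the spanning non-degeneracy condition for \cref{thm:rough-halasz}, which requires an explicit inclusion--exclusion description of $\boldsymbol{\Delta}_e \alpha$ and a careful combinatorial argument leveraging completeness and the multi-vertex construction to show that enough linearly independent directions arise as typical changes; (b) ensuring that the enriched colour system $\mathcal{G}'$ inherits the $p$-general and complete properties from $\mathcal{G}$ with sufficiently high probability for the induction hypothesis to apply --- this is precisely what the weakly $p$-general notion seems designed to absorb; and (c) cleanly implementing the three-way combination of concentration, coarse anticoncentration, and the moment bound so that all error terms (including the contribution from atypical $E_1$) are manageable, likely via a two-stage conditioning first on $E_2$ and then on $E_1$.
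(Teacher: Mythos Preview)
Your overall strategy matches the paper's, and you have correctly identified most of the moving parts: the decomposition $\psi_H=\alpha+\beta$, the coarse anticoncentration of $\alpha$ via \cref{thm:rough-halasz}, the concentration of $\beta$, and the moment argument invoking the induction hypothesis on the enriched colour system. However, as written, the induction does not close arithmetically, and the reason is a missing ingredient.

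From the induction hypothesis alone, the $t_{g+1}$-th moment bound you quote is $n^{(g-h+2)Tt_{g+1}+o(1)}$, which by Markov only yields a conditional point probability $q=n^{(g-h+2)T+o(1)}$. Multiplying by the coarse anticoncentration $n^{-T/2+o(1)}$ for $\alpha$ gives $n^{(g-h+3/2)T+o(1)}$, which misses the target $n^{(g-h+1)T+o(1)}$ by a factor of $n^{T/2}$. The paper recovers this missing factor by a \emph{second} application of \cref{thm:rough-halasz}, this time to the map $\mathcal{S}\mapsto \E_{G_0}[\beta]=\mu_{\mathcal{G}',\mathcal{S}}$, viewed as an almost-linear function of the Bernoulli variables in $E_2$ (the random neighbourhoods). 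This ``medium-scale'' anticoncentration (\cref{lem:medium-scale}) shows that for each fixed target, each independent realisation $\mathcal{S}_i$ lands $\mu_{\mathcal{G}',\mathcal{S}_i}$ near that target with probability at most $n^{-T/2+o(1)}$; by independence over the $t_{g+1}$ shades this contributes an extra $n^{-Tt_{g+1}/2}$ to the joint probability before the induction hypothesis is applied, sharpening the moment bound to $n^{(g-h+3/2)Tt_{g+1}+o(1)}$ and hence $q=n^{(g-h+3/2)T+o(1)}$, which then combines with the coarse $n^{-T/2}$ to hit the target.

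A secondary issue is that in your moment argument the target $\lambda-\alpha(E_1)$ depends on $E_1$, so the induction hypothesis (which requires a fixed $\lambda'$) does not apply directly as stated. The paper circumvents this by proving, for every \emph{fixed} $\lambda$, the dispersedness bound $\Pr_{\mathcal{S}}(\beta=\lambda\mid G_0^-)\le q$ with high probability over $G_0^-$ (via the moment argument with both $G_0$ and $\mathcal S$ random, together with a union bound over $\lambda$), and only afterwards specialises to the $G_0^-$-dependent target. Similarly, the concentration of $\beta$ should be around the \emph{unconditional} mean $\mu_{\mathcal{G}'}$ rather than $\E[\beta\mid E_1]$, so that the box into which $\alpha$ is forced has a fixed centre and \cref{thm:rough-halasz} applies directly to $\alpha$. (Your stated concentration scale $\widetilde O(n^{h-2})$ is also off; the correct scale is $\widetilde O(n^{h-g-3/2})$.)
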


The $o(1)$-term in \cref{thm:general-subgraph-counts} goes to zero as $n$ tends to infinity, but it may depend on the choices for $p$, $h$, $H$, and $g, a_1,\dots,a_g,t_1,\dots,t_g$ fixed in the beginning of the theorem statement (in other words, these values are are treated as constants in the asymptotics).

Note that for $g=0$, in \cref{thm:general-subgraph-counts} we have $T=1$ (using the convention that the empty product is equal to 1), and the colour system $\mathcal{G}$ has no coloured vertices (so it consists of $n$ uncoloured vertices and no edges). We already observed that such a colour system is always $p$-general and complete. Furthermore, $\psi_H(\mathcal{G}, G_0)$ is simply the number of labelled copies of $H$ in $G_0\sim\GG(n, p)$. This quantity is precisely the random variable $X_H$ in \cref{thm:subgraph-counts}. Thus, \cref{thm:general-subgraph-counts} for $g=0$ states that for all $\ell\in \ZZ$ we have $X_H=\ell$ with probability at most $n^{(1-h)\cdot 1+o(1)}=n^{1-h+o(1)}$. This is precisely the statement of \cref{thm:subgraph-counts}.

So, \cref{thm:subgraph-counts} corresponds to the case $g=0$ in \cref{thm:general-subgraph-counts}, and it therefore suffices to prove \cref{thm:general-subgraph-counts}. We will use backwards induction starting from $g=h-1$ and going down to $g=0$. Note that the case $g=h-1$ is trival.

For the rest of the paper, we fix a particular graph $H$ with $h$ vertices and some $0<p<1$. Before concluding this section, we make a few more definitions and state an important intermediate result for the induction step (\cref{cor:dispersed}, to follow). Basically, at each step of the induction, we have a colour system $\mathcal G$ with $g-1$ colours, and we add vertices of a new colour with random neighbourhoods, obtaining a random colour system $\mathcal{G_S}$. We will use the ``$g$'' case of \cref{thm:general-subgraph-counts} and a moment argument to show that typically $G_0$ has the property that $\psi(\mathcal {G_S},G_0,\cdot)$ is anticoncentrated, subject only to the randomness in $\mathcal{G_S}$. This will be the content of \cref{cor:dispersed}.

\begin{defn}\label{defn:restricted-colour-system}
For integers $g\geq 1$ and $a_1,\dots,a_g, t_1,\dots,t_{g-1}\geq 1$, define a \emph{restricted colour system} with parameters $(g, a_1,\dots,a_g, t_1,\dots,t_{g-1})$ to be a colour system with parameters $(g, a_1,\dots,a_g, t_1,\dots,t_{g-1}, 1)$ in which there are no edges in colour $g$ between the coloured and the uncoloured vertices (so all edges of colour $g$ are between the vertices of colour $g$, recalling that the colour of an edge is the minimum of the colours of its endpoints).
\end{defn}

Call a restricted colour system $\mathcal{G}$ with parameters $(g, a_1,\dots,a_g, t_1,\dots,t_{g-1})$ complete, if it is complete when viewed as a colour system with parameters $(g, a_1,\dots,a_g, t_1,\dots,t_{g-1}, 1)$. Call a restricted colour system $\mathcal{G}$ with parameters $(g, a_1,\dots,a_g, t_1,\dots,t_{g-1})$ \emph{essentially $p$-general}, if the colour system with parameters $(g-1, a_1,\dots,a_{g-1}, t_1,\dots,t_{g-1})$ obtained by ignoring colour $g$ is $p$-general. Similarly, call $\mathcal{G}$ \emph{essentially weakly $p$-general}, if the colour system obtained by ignoring colour $g$ is weakly $p$-general.

\begin{defn}\label{defn:extension-restricted-colour-system}
Consider a restricted colour system $\mathcal{G}$ with parameters $(g, a_1,\dots,a_g, t_1,\dots,t_{g-1})$. For each vertex $v$ of colour $g$ in $\mathcal{G}$, choose a random subset $S_v \su \operatorname{U}(\mathcal{G})$ by taking each element of $\operatorname{U}(\mathcal{G})$ independently with probability $p$ (and choose the different sets $S_v$ all independent from each other). Let $\mathcal{S}=(S_v)_{v}$ be the collection of random sets chosen this way. Then we can obtain a random colour system $\mathcal{G_S}$ with parameters $(g, a_1,\dots,a_g, t_1,\dots,t_{g-1}, 1)$ by connecting each vertex $v$ of colour $g$ to all vertices in $S_v$ and colouring all these new edges in the unique shade of colour $g$.
\end{defn}

\begin{defn}\label{defn:dispersedness}
Consider $0<q<1$ and an essentially $p$-general complete restricted colour system $\mathcal{G}$ with parameters $(g, a_1,\dots,a_g, t_1,\dots,t_{g-1})$. We say that a graph $G_0$ on the vertex set $\operatorname{U}(\mathcal{G})$ is \emph{$(p,q,\mathcal{G})$-dispersed} if for all functions $\lambda: [t_1]\times \dots\times [t_{g-1}]\times [1]\to \ZZ$ the following holds. When choosing $\mathcal{S}$ randomly as in \cref{defn:extension-restricted-colour-system}, we have $\Pr\left(\psi_H(\mathcal{G_S}, G_0,\cdot)=\lambda\right)\le q$.
\end{defn}

Now we are ready to state \cref{cor:dispersed}, as announced above.

\begin{prop}
\label{cor:dispersed}Fix integers $1\leq g\leq h-1$ and $a_1,\dots,a_g, t_1,\dots,t_{g-1}\geq 1$ and let $T=t_1\dotsm t_{g-1}$. Then there exists a function $\delta:\NN\to \RR_{\geq 0}$ with $\lim_{n\to\infty}\delta(n)=0$, such that for any essentially $p$-general complete restricted colour system $\mathcal{G}$ of order $n$ with parameters $(g, a_1,\dots,a_g, t_1,\dots,t_{g-1})$ the following holds. If $G_0\sim\GG(\operatorname{U}(\mathcal{G}), p)$ is a random graph on the vertex set $\operatorname{U}(\mathcal{G})$, then with probability $1-n^{-\omega(1)}$ the graph $G_0$ is $(p,q,\mathcal{G})$-dispersed, where $q=n^{(g-h+(1/2))\cdot T+\delta(n)}$.
\end{prop}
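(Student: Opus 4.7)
My plan is a moment argument on $p_\lambda(G_0):=\Pr_{\mathcal S}(\psi_H(\mathcal G_{\mathcal S},G_0,\cdot)=\lambda)$ using the $g$-case of \cref{thm:general-subgraph-counts} as the induction hypothesis, combined with an application of \cref{thm:rough-halasz} to supply the missing factor in the exponent. Fix an integer $k$ to be chosen depending on $n$, sample $k$ independent copies $\mathcal S_1,\dots,\mathcal S_k$ of the random family in \cref{defn:extension-restricted-colour-system}, and assemble them into an augmented colour system $\mathcal G_{\mathcal S_1,\dots,\mathcal S_k}$ with parameters $(g,a_1,\dots,a_g,t_1,\dots,t_{g-1},k)$, in which the $i$-th shade of colour $g$ records the adjacencies given by $\mathcal S_i$. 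Completeness is inherited from $\mathcal G$ (no edges among coloured vertices have been touched), and a Chernoff and union-bound argument shows that with probability at least $1-n^{-\omega(1)}$ the augmented system is $p$-general.

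Since the $\mathcal S_i$ are conditionally independent given $G_0$, we have
\[
 p_\lambda(G_0)^k=\Pr_{\mathcal S_1,\dots,\mathcal S_k}\!\left(\psi_H(\mathcal G_{\mathcal S_i},G_0,\cdot)=\lambda\text{ for all }i\cond G_0\right),
\]
which is precisely the probability that $\psi_H(\mathcal G_{\mathcal S_1,\dots,\mathcal S_k},G_0,\cdot)$ equals the specific function $\lambda'\colon[t_1]\times\cdots\times[t_{g-1}]\times[k]\to\ZZ$ constant across the shades of colour $g$ and equal to $\lambda$. Applying the $g$-case of \cref{thm:general-subgraph-counts} to the augmented system (on the high-probability event of $p$-generality, and absorbing the complementary contribution into an $n^{-\omega(1)}$ term) therefore yields $\E_{G_0}[p_\lambda(G_0)^k]\le n^{(g-h+1)Tk+o(1)}+n^{-\omega(1)}$. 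Markov's inequality combined with a union bound over the at most $n^{O(hT)}$ realisable $\lambda$ then controls $\Pr_{G_0}(\exists\lambda:p_\lambda(G_0)>q)$, and letting $k$ grow slowly with $n$ drives the exceptional probability below any $n^{-c}$.

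The principal obstacle is that this pure moment-plus-induction calculation by itself bottoms out at $q=n^{(g-h+1)T+o(1)}$, whereas the proposition requires an exponent smaller by $T/2$. Closing this gap requires invoking \cref{thm:rough-halasz} on $\psi_H(\mathcal G_{\mathcal S},G_0,\cdot)\in\RR^T$ viewed as a function of the Bernoulli indicators in $\mathcal S$ (for fixed $G_0$). The delicate combinatorial step is to verify its non-degeneracy hypothesis, namely to exhibit spanning direction vectors $\boldsymbol v_1,\dots,\boldsymbol v_T\in\RR^T$ together with disjoint linearly-sized index sets $I_1,\dots,I_T$ of colour-$g$-to-uncoloured edge positions such that toggling any $i\in I_j$ typically shifts $\psi_H$ by a vector concentrated around a fixed scalar multiple of $\boldsymbol v_j$. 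Completeness of $\mathcal G$ is precisely what supplies the necessary adjacency diversity among the colour-$g$ vertices to produce $T$ such spanning directions. Folding the resulting Hal\'asz window bound of $n^{-T/2+o(1)}$ back into the moment argument as an a priori ceiling on $p_\lambda(G_0)$ produces the extra factor of $n^{-T/2}$ and brings the final bound down to $q=n^{(g-h+1/2)T+\delta(n)}$.
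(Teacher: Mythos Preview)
Your overall structure---a moment argument on $p_\lambda(G_0)$ via independent copies $\mathcal S_1,\dots,\mathcal S_k$, combined with a Hal\'asz-type input for the missing $T/2$---matches the paper's. However, the way you propose to combine the two inputs does not close the gap.

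The problem is this. Applying \cref{thm:rough-halasz} to $\mathcal S\mapsto\psi_H(\mathcal G_{\mathcal S},G_0,\cdot)$ for a fixed typical $G_0$ yields at best the a~priori ceiling $p_\lambda(G_0)\le A:=n^{-T/2+o(1)}$. But this ceiling and the moment bound $\E_{G_0}[p_\lambda^k]\le B:=n^{(g-h+1)Tk+o(1)}$ are bounds on the \emph{same} quantity and do not multiply. Concretely, writing $p_\lambda^k\le A^{k'}p_\lambda^{k-k'}$ on the typical event and bounding $\E[p_\lambda^{k-k'}]$ via the induction gives an exponent $-Tk'/2+(g-h+1)T(k-k')$, which is linear in $k'$ with slope $(h-g-\tfrac32)T$; for $h-g\ge 2$ the optimum is at $k'=0$ and one recovers only $(g-h+1)Tk$, never $(g-h+\tfrac12)Tk$. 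Markov on $p_\lambda^k$ likewise just returns the unimproved exponent. So ``folding back the ceiling'' does not produce the extra $n^{-Tk/2}$.

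The paper resolves this by passing to the conditional expectation $\mu_{\mathcal G,\mathcal S}:=\E_{G_0}[\psi_H(\mathcal G_{\mathcal S},G_0,\cdot)]$, which depends on $\mathcal S$ \emph{only}. An Azuma--Hoeffding step (\cref{lemma-graph-good}) shows that, on a $1-n^{-\omega(1)}$ event, $\psi_H(\mathcal G_{\mathcal S_i},G_0,\cdot)=\lambda$ forces $\|\mu_{\mathcal G,\mathcal S_i}-\lambda\|_\infty\le n^{h-g-1}\log n$. This last event lives in $\mathcal S_i$ alone, so \cref{thm:rough-halasz} applied to $\mu_{\mathcal G,\mathcal S}$ (this is \cref{lem:medium-scale}) together with independence of the $\mathcal S_i$ gives probability at most $n^{-Tk/2+o(1)}$. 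One then \emph{conditions} on the $\mathcal S_i$ and applies the $g$-case of \cref{thm:general-subgraph-counts} over the remaining randomness in $G_0$, obtaining $n^{(g-h+1)Tk+o(1)}$. Because these two bounds now act on disjoint sources of randomness, they genuinely multiply to $n^{(g-h+1/2)Tk+o(1)}$ (\cref{lem:key-lemma-proof-cor}), after which your Markov-plus-union-bound step finishes exactly as you describe. The missing idea in your plan is precisely this decoupling through $\mu_{\mathcal G,\mathcal S}$.
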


We remark that in the case $g=1$, we have $T=1$ in \cref{cor:dispersed} (again by the convention that the empty product is equal to $1$).

The rest of the paper is organised as follows. In \cref{sec:general-position} we give some very straightforward lemmas about sets in general position (in particular, random collections of sets are very likely to be in general position). In \cref{sect:thm-implies-cor} we use a moment argument to prove that the ``$g$'' case of \cref{thm:general-subgraph-counts} implies the corresponding case of \cref{cor:dispersed}. In \cref{sect:cor-implies-thm} we show how this case of \cref{cor:dispersed} implies the ``$g-1$'' case of \cref{thm:general-subgraph-counts}, completing the induction step. The contents of both of these sections consist mostly of calculations and putting various pieces together. However, in both these sections we omit the proofs of important anticoncentration lemmas (\cref{lem:medium-scale,lem:rough-scale}). The rest of the paper is spent proving these lemmas via our new multivariate anticoncentration inequality in \cref{thm:rough-halasz}. In \cref{sec:cores} we introduce the formalism of a ``core'', which will be used for the proofs of \cref{lem:medium-scale,lem:rough-scale}. To be specific, we prove a linear independence lemma for certain vectors defined in terms of cores, which we will use to check the linear independence condition in \cref{thm:rough-halasz}. Finally, in \cref{sec:halasz-prep} we put everything together to prove \cref{lem:medium-scale,lem:rough-scale}.

\section{Sets in general position}
\label{sec:general-position}

In this section we record some simple lemmas regarding sets in general position. Recall that in the last section we fixed some $p\in (0,1)$.

\begin{lem}\label{lem:sets-general-position-intersection}
Suppose that $S_{1},\dots,S_{m}\su R$ are subsets of some ground set $R$ which are in $(p, K)$-general position, for some integer $K\geq 1$. Then for every subset $I\subseteq\left\{ 1,\dots,m\right\} $, we have
\[
\left|\left|\bigcap_{i\in I}S_{i}\right|-p^{\left|I\right|}|R|\right|\le 2^m\cdot  K\cdot |R|^{1/2}\log |R|.
\]
\end{lem}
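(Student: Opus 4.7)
The plan is to express the intersection $\bigcap_{i\in I}S_i$ as a disjoint union of the atomic pieces $S_J$ defined in \cref{def:general-position}, apply the triangle inequality using the $(p,K)$-general position bounds on each $|S_J|$, and then recognise the resulting sum of main terms as $p^{|I|}|R|$ via the binomial theorem.

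More precisely, an element $x\in R$ belongs to exactly one $S_J$, namely the one with $J=\{j:x\in S_j\}$. Hence $x\in \bigcap_{i\in I} S_i$ if and only if $x\in S_J$ for some (unique) $J\supseteq I$, which gives the disjoint decomposition
\[
\bigcap_{i\in I} S_i \;=\; \bigsqcup_{J\supseteq I} S_J.
\]
Taking cardinalities and applying the triangle inequality together with the hypothesis $\bigl||S_J|-p^{|J|}(1-p)^{m-|J|}|R|\bigr|\le K\cdot |R|^{1/2}\log |R|$ for each of the at most $2^{m-|I|}$ sets $J\supseteq I$, I obtain
\[
\left|\left|\bigcap_{i\in I}S_i\right|-\sum_{J\supseteq I}p^{|J|}(1-p)^{m-|J|}|R|\right|\le 2^{m-|I|}\cdot K\cdot |R|^{1/2}\log |R|.
\]

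It then remains to simplify the sum of main terms. Reindexing by $J'=J\sm I$, which ranges freely over all subsets of $\{1,\dots,m\}\sm I$, yields
\[
\sum_{J\supseteq I} p^{|J|}(1-p)^{m-|J|} \;=\; p^{|I|}\sum_{J'\su \{1,\dots,m\}\sm I} p^{|J'|}(1-p)^{(m-|I|)-|J'|} \;=\; p^{|I|}\bigl(p+(1-p)\bigr)^{m-|I|} \;=\; p^{|I|},
\]
where the second equality is the binomial theorem. Combining this with the previous estimate and using $2^{m-|I|}\le 2^m$ gives the claimed bound. There is no real obstacle here; the lemma is essentially a bookkeeping exercise once the decomposition into atoms is written down.
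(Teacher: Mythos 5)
Your proof is correct and follows essentially the same route as the paper's: decompose $\bigcap_{i\in I}S_i$ into the disjoint atoms $S_J$ with $J\supseteq I$, apply the triangle inequality to the hypothesised bounds on each $|S_J|$, and identify the sum of main terms with $p^{|I|}|R|$ via the binomial expansion of $(p+(1-p))^{m-|I|}$. The only (immaterial) difference is that you track the slightly sharper count $2^{m-|I|}$ before relaxing it to $2^m$.
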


\begin{proof}%[Proof of \cref{lem:sets-general-position-intersection}]
Fix some subset $I\subseteq\left\{ 1,\dots,m\right\} $. For every subset $J\subseteq\left\{ 1,\dots,m\right\}$ with $I\su J$, let $S_{J}=R\cap \bigcap_{i\in J}S_{i}\cap \bigcap_{i\notin J}(R\sm S_{i})$ be as in \cref{def:general-position}. Note that there are at most $2^m$ such subsets $J$. Also, note that the set $\bigcap_{i\in I}S_{i}$ is the union of all the set $S_J$ for all $J\subseteq\left\{ 1,\dots,m\right\}$ with $I\su J$, and all these sets $S_J$ are disjoint. Hence
\[\left|\bigcap_{i\in I}S_{i}\right|=\sum_{J}\left|S_J\right|.\]
Here, and in the rest of this proof, the sum is over all subsets $J\subseteq\left\{ 1,\dots,m\right\}$ with $I\su J$. Noting that
\[p^{\left|I\right|}|R|=p^{\left|I\right|}\left(p+(1-p)\right)^{m-\left|I\right|}|R|=\sum_{J}p^{\left|J\right|}\left(1-p\right)^{m-\left|J\right|}|R|,\]
we obtain that
\begin{multline*}
\left|\left|\bigcap_{i\in I}S_{i}\right|-p^{\left|I\right|}|R|\right|=\left|\sum_{J}\left|S_J\right|-\sum_{J}p^{\left|J\right|}\left(1-p\right)^{m-\left|J\right|}|R|\right|\leq \sum_{J}\left|\left|S_J\right|-p^{\left|J\right|}\left(1-p\right)^{m-\left|J\right|}|R|\right|\\
\leq \sum_{J} K\cdot |R|^{1/2}\log |R|\leq 2^m\cdot  K\cdot |R|^{1/2}\log |R|,
\end{multline*}
as desired.
\end{proof}

\begin{lem}\label{lem:sets-general-position-addition}
Fix some $m\in \NN$. Let $R$ be an $n$-element set and let $S_{1},\dots,S_{m}\su R$ be in $(p,K)$-general position for some integer $K\geq 1$. Let $S_{m+1}$ be a random set chosen by taking each element of $R$ independently with probability $p$. Then with probability $1-n^{-\omega(1)}$ the sets $S_{1},\dots,S_m,S_{m+1}\su R$ are in $(p,K)$-general position.
\end{lem}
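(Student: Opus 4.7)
The plan is to condition on the deterministic sets $S_1, \dots, S_m$, analyze each of the $2^{m+1}$ "atoms" $S_I$ (for $I \subseteq \{1, \dots, m+1\}$) separately, and combine a Chernoff estimate with a triangle inequality. Throughout, $m$ is a fixed constant, so we are free to union bound over all $I$.

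The key observation is that the old atoms $T_{I'} := R \cap \bigcap_{i \in I'} S_i \cap \bigcap_{i \in \{1,\dots,m\} \sm I'} (R \sm S_i)$, for $I' \subseteq \{1,\dots,m\}$, partition $R$ and already satisfy $\bigl||T_{I'}| - p^{|I'|}(1-p)^{m-|I'|}n\bigr| \le K n^{1/2}\log n$ by the assumption on $S_1,\dots,S_m$. Each new atom $S_I$ is either $T_{I'} \cap S_{m+1}$ (when $m+1 \in I$, $I' = I \sm \{m+1\}$) or $T_{I'} \sm S_{m+1}$ (when $m+1 \notin I$, $I' = I$). Since $S_{m+1}$ includes each element of $R$ independently with probability $p$, the conditional distribution of $|T_{I'} \cap S_{m+1}|$ given the $T_{I'}$'s is $\Bin(|T_{I'}|, p)$, with mean $p|T_{I'}|$. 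By the standard Chernoff bound, for any fixed $c>0$,
\[
\Pr\bigl(\bigl||T_{I'} \cap S_{m+1}| - p|T_{I'}|\bigr| > c\, n^{1/2}\log n\bigr) \le 2\exp\bigl(-\Omega((\log n)^2)\bigr) = n^{-\omega(1)}.
\]
The same bound applies to $|T_{I'} \sm S_{m+1}|$, which has conditional mean $(1-p)|T_{I'}|$.

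Now I would combine these estimates with the triangle inequality. For $I = I' \cup \{m+1\}$, the target value is $p^{|I|}(1-p)^{m+1-|I|} n = p \cdot p^{|I'|}(1-p)^{m-|I'|}n$, so
\[
\bigl||S_I| - p^{|I|}(1-p)^{m+1-|I|} n\bigr|
\le \bigl||T_{I'} \cap S_{m+1}| - p|T_{I'}|\bigr| + p \bigl||T_{I'}| - p^{|I'|}(1-p)^{m-|I'|}n\bigr|.
\]
The first term is at most, say, $(1-p) n^{1/2}\log n$ with probability $1 - n^{-\omega(1)}$ by Chernoff, and the second term is at most $pK n^{1/2}\log n$ by hypothesis. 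Since $K \ge 1$, the sum is bounded by $K n^{1/2}\log n$. The case $I = I'$ (not containing $m+1$) is completely analogous, working with $|T_{I'} \sm S_{m+1}|$ and its conditional mean $(1-p)|T_{I'}|$.

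A union bound over the $2^{m+1}$ choices of $I$ (a constant number, since $m$ is fixed) preserves the $n^{-\omega(1)}$ failure probability, yielding that with probability $1 - n^{-\omega(1)}$ every atom $S_I$ satisfies the required bound, i.e.\ $S_1, \dots, S_{m+1}$ are in $(p, K)$-general position. No step is a genuine obstacle here: the whole argument amounts to noting that the "conditional expectation" of the new size is exactly what general position predicts (up to the old error), and applying Chernoff at the scale $n^{1/2}\log n$, which is precisely matched to the polylogarithmic slack in the general position definition.
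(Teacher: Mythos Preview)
Your proposal is correct and takes essentially the same approach as the paper: partition $R$ into the old atoms $T_{I'}$, apply Chernoff with deviation $(1-p)n^{1/2}\log n$ (respectively $p\,n^{1/2}\log n$) to $|T_{I'}\cap S_{m+1}|$ (respectively $|T_{I'}\setminus S_{m+1}|$), and combine with the hypothesis via the triangle inequality so that the $(1-p)+pK\le K$ trick keeps the same constant $K$. The paper's proof is identical up to notation.
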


\begin{proof}%[Proof of \cref{lem:sets-general-position-addition}]
For any subset $I\subseteq\left\{ 1,\dots,m\right\}$, let $S_{I}=R\cap \bigcap_{i\in I}S_{i}\cap \bigcap_{i\notin I}(R\sm S_{i})$ be as in \cref{def:general-position}. We need to show that with probability $1-n^{-\omega(1)}$ we have
\begin{equation}\label{eq-S-I-addition-to-show-1}
 \left|\left|S_{I}\cap S_{m+1}\right|-p^{\left|I\right|+1}\left(1-p\right)^{m-\left|I\right|}n\right|\le  K\cdot n^{1/2}\log n.
\end{equation}
and
\begin{equation}\label{eq-S-I-addition-to-show-2}
\left|\left|S_{I}\cap (R\sm S_{m+1})\right|-p^{\left|I\right|}\left(1-p\right)^{m-\left|I\right|+1}n\right|\le  K\cdot n^{1/2}\log n.
\end{equation}
for each $I\subseteq\left\{ 1,\dots,m\right\}$. By a union bound over all $2^m$ choices for $I$, it suffices that for each individual set $I\subseteq\left\{ 1,\dots,m\right\}$ each of these properties holds with probability $1-n^{-\omega(1)}$.

Fix some $I\subseteq\left\{ 1,\dots,m\right\}$. By assumption, the set $S_I\su R$ satisfies 
\begin{equation}\label{eq-S-I-assumption-addition}
 \left|\left|S_{I}\right|-p^{\left|I\right|}\left(1-p\right)^{m-\left|I\right|}n\right|\le  K\cdot n^{1/2}\log n.
\end{equation}
Note that $\left|S_{I}\cap S_{m+1}\right|$ is binomially distributed with parameters $(\left|S_{I}\right|, p)$. Thus, the probability to have 
\begin{equation}\label{eq-S-I-close-addition}
\left|\left|S_{I}\cap S_{m+1}\right|-p\left|S_{I}\right|\right|\le  (1-p)\cdot n^{1/2}\log n
\end{equation}
is by the Chernoff bound at least
\[1-2\exp\left(-\frac{\left((1-p)\cdot n^{1/2}\log n\right)^2}{\left|S_{I}\right|}\right)\geq 1-\exp\left(-\Omega\left((\log n)^2\right)\right)=1-n^{-\omega(1)}, \]
where we used that $\vert S_I\vert\leq n$ as $S_I\su R$. Whenever \cref{eq-S-I-close-addition} is satisfied, then together with \cref{eq-S-I-assumption-addition} we obtain by the triangle inequality
\[\left|\left|S_{I}\cap S_{m+1}\right|-p^{\left|I\right|+1}\left(1-p\right)^{m-\left|I\right|}n\right|\leq  (1-p)\cdot n^{1/2}\log n+p\cdot K\cdot n^{1/2}\log n\le  K\cdot n^{1/2}\log n,\]
as desired. Thus \cref{eq-S-I-addition-to-show-1} indeed holds with probability $1-n^{-\omega(1)}$. Similarly, we can show that \cref{eq-S-I-addition-to-show-2} holds with probability $1-n^{-\omega(1)}$ by considering the probability that
\[\left|\left|S_{I}\cap (R\sm S_{m+1})\right|-(1-p)\left|S_{I}\right|\right|\le  p\cdot n^{1/2}\log n\]
holds, using that $\left|S_{I}\cap (R\sm S_{m+1})\right|$ is binomially distributed with parameters $(\left|S_{I}\right|, 1-p)$.
\end{proof}

\begin{lem}\label{lem:sets-general-position-deletion}
Fix some positive integer $\ell$. Then the following holds for sufficiently large $n$. Let $R$ be an $n$-element set and let $S_{1},\dots,S_{m}\su R$ be in $(p,K)$-general position for some integer $K\geq 1$. Let $K'$ be an integer with $K'>K$. Then for any subset $R'\su R$ obtained by deleting $\ell$ elements from $R$, the sets $S_1\cap R',\dots,S_m\cap R'\su R'$ are in $(p,K')$-general position.
\end{lem}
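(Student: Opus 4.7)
The plan is to show that deleting $\ell$ elements from the ground set perturbs each of the $2^m$ cells $|S_I|$ by at most $\ell$, which is negligible compared to the $Kn^{1/2}\log n$ tolerance once $n$ is sufficiently large. The key observation is that for each $I\su\{1,\dots,m\}$, if $S_I = R\cap \bigcap_{i\in I}S_i\cap \bigcap_{i\notin I}(R\sm S_i)$ is the cell appearing in \cref{def:general-position} for the original family, and $S'_I$ is the corresponding cell defined for $S_1\cap R',\dots,S_m\cap R'$ in the ground set $R'$, then a direct unpacking using $R'\sm(S_i\cap R') = R'\sm S_i$ gives $S'_I = S_I\cap R'$. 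In particular $\bigl||S'_I|-|S_I|\bigr|\le \ell$, and $|n'|:=|R'|=n-\ell$ where $n=|R|$.

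Writing $n=|R|$ and $n'=n-\ell$, the triangle inequality then yields
\[
\bigl||S'_I| - p^{|I|}(1-p)^{m-|I|}n'\bigr| \le \bigl||S'_I|-|S_I|\bigr| + \bigl||S_I|-p^{|I|}(1-p)^{m-|I|}n\bigr| + p^{|I|}(1-p)^{m-|I|}\ell \le Kn^{1/2}\log n + 2\ell,
\]
using the $(p,K)$-general position hypothesis for the middle term and $p^{|I|}(1-p)^{m-|I|}\le 1$ for the last. It remains to verify that $Kn^{1/2}\log n + 2\ell \le K'(n')^{1/2}\log n'$ for all sufficiently large $n$ (depending only on $\ell, K, K'$). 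Since $K,K'$ are integers with $K'>K$, we have $K'\ge K+1$, and since $(n-\ell)^{1/2}\log(n-\ell) = (1-o(1))\,n^{1/2}\log n$ as $n\to\infty$, the right-hand side eventually exceeds $(K+\tfrac12)n^{1/2}\log n$, which in turn dominates the left-hand side for all large enough $n$. Taking a union over the (constantly many) subsets $I$ is automatic since the bound holds for each $I$ separately.

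There is no real obstacle; the lemma is essentially a robustness statement for \cref{def:general-position}, saying that an additive $O(1)$ perturbation of each cell is absorbed as soon as the tolerance parameter is strictly increased, because the main error term $Kn^{1/2}\log n$ is of order much larger than $\ell$. The only small subtlety is that one must allow $n$ to be large enough in terms of $\ell$, $K$, $K'$, which is exactly what the statement permits.
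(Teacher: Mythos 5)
Your proof is correct and follows essentially the same route as the paper's: identify the cell $S'_I$ for the restricted family with $S_I\cap R'$, bound the perturbation of each cell and of the target value by $\ell$ each, and absorb the resulting $+2\ell$ into the strictly larger tolerance $K'\ge K+1$ for $n$ large. No issues.
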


\begin{proof}%[Proof of \cref{lem:sets-general-position-deletion}]
We need to show that for every subset $I\subseteq\left\{ 1,\dots,m\right\}$ we have
\[ \left|\left|S_{I}\cap R'\right|-p^{\left|I\right|}\left(1-p\right)^{m-\left|I\right|}|R'|\right|\le  K'\cdot |R'|^{1/2}\log |R'|.\]
So fix some $I\subseteq\left\{ 1,\dots,m\right\}$. By assumption, the set $S_I\su R$ satisfies 
\[ \left|\left|S_{I}\right|-p^{\left|I\right|}\left(1-p\right)^{m-\left|I\right|}|R|\right|\le  K\cdot |R|^{1/2}\log |R|.\]
Now, $S_{I}\cap R'$ is obtained from $S_{I}$ by deleting at most $\ell$ elements, and therefore
\[\left|\left|S_{I}\cap R'\right|-\left|S_{I}\right|\right|\le \ell.\]
Furthermore, we have
\[ \left|p^{\left|I\right|}\left(1-p\right)^{m-\left|I\right|}|R|-p^{\left|I\right|}\left(1-p\right)^{m-\left|I\right|}|R'|\right|=p^{\left|I\right|}\left(1-p\right)^{m-\left|I\right|}\left|\left|R\right|-|R'|\right|= p^{\left|I\right|}\left(1-p\right)^{m-\left|I\right|}\ell\le \ell.\]
Thus, the triangle inequality yields (using $\vert R'\vert=\vert R\vert-\ell$)
\[ \left|\left|S_{I}\cap R'\right|-p^{\left|I\right|}\left(1-p\right)^{m-\left|I\right|}|R'|\right|\le  K\cdot |R|^{1/2}\log |R|+2\ell\leq K'\cdot |R'|^{1/2}\log |R'|.\]
as long as $n=\vert R\vert$ is sufficiently large with respect to $\ell$.
\end{proof}

\begin{cor}
\label{lemma-general-deletion-weakly}
Fix $g,a_1,\dots,a_{g-1}, t_1,\dots,t_{g-1}$, and suppose $n$ is sufficiently large with respect to these values. Let $\mathcal{G}$ be a $p$-general colour system of order $n$ with parameters $(g-1, a_1,\dots,a_{g-1}, t_1,\dots,t_{g-1})$. Then, whenever we delete $2^{a_1t_1+\dots+a_{g-1}t_{g-1}}$ vertices in $\operatorname{U}(\mathcal{G})$, the resulting colour system is weakly $p$-general.
\end{cor}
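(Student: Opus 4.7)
The plan is to observe that this corollary is essentially a direct translation of \cref{lem:sets-general-position-deletion} (the deletion lemma for $(p,K)$-general position) into the language of colour systems. Unpacking the definitions, $\mathcal{G}$ being $p$-general with parameters $(g-1,a_1,\dots,a_{g-1},t_1,\dots,t_{g-1})$ means the following: setting $m = a_1 t_1 + \dots + a_{g-1} t_{g-1}$, the $\operatorname{U}(\mathcal{G})$-neighbourhoods $S_1,\dots,S_m$ of the coloured vertices in each of the shades of their respective colours form a family of subsets of $\operatorname{U}(\mathcal{G})$ which is in $(p, 3^{g-1})$-general position. Similarly, the conclusion that the resulting colour system is weakly $p$-general is precisely the statement that the restrictions of these sets to the remaining ground set lie in $(p, 2 \cdot 3^{g-1})$-general position.

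The crucial simplification is that deleting vertices from $\operatorname{U}(\mathcal{G})$ affects neither the coloured vertex set, the edges between coloured vertices, nor the collection of shades; in particular the index $m$ and the identities of the neighbourhood sets are unchanged, only their ground set is shrunk. So the question reduces purely to a statement about how $(p,K)$-general position behaves under a bounded number of deletions from the ambient set.

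Concretely, I would apply \cref{lem:sets-general-position-deletion} with ground set $R = \operatorname{U}(\mathcal{G})$, sets $S_1,\dots,S_m$ as above, constants $K = 3^{g-1}$ and $K' = 2 \cdot 3^{g-1}$ (which satisfy $K' > K$ since $K' - K = 3^{g-1} \geq 1$ and both are positive integers), and $\ell = 2^{a_1 t_1 + \dots + a_{g-1} t_{g-1}}$. Note that $|\operatorname{U}(\mathcal{G})| = n - (a_1 + \dots + a_{g-1})$, which is still $n - O(1)$ and in particular tends to infinity with $n$.

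The only thing to verify is that the hypothesis of \cref{lem:sets-general-position-deletion} (``$n$ sufficiently large with respect to $\ell$'') is implied by the corollary's hypothesis (``$n$ sufficiently large with respect to $g, a_1, \dots, a_{g-1}, t_1, \dots, t_{g-1}$''). This is immediate, because $\ell$ is itself an explicit function of these parameters. There is no real obstacle here: the corollary is essentially a notational bookkeeping step that packages \cref{lem:sets-general-position-deletion} in the form that will be convenient when we later need to delete a bounded number of vertices (a constant depending only on the fixed parameters) from the uncoloured part of a $p$-general colour system without losing control of its approximate generality.
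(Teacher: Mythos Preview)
Your proposal is correct and takes essentially the same approach as the paper: both proofs unpack the definition of $p$-general to obtain that the $m=a_1t_1+\dots+a_{g-1}t_{g-1}$ neighbourhood sets are in $(p,3^{g-1})$-general position, and then invoke \cref{lem:sets-general-position-deletion} with $K=3^{g-1}$, $K'=2\cdot 3^{g-1}$, and $\ell=2^{a_1t_1+\dots+a_{g-1}t_{g-1}}$ to conclude that the restricted sets are in $(p,2\cdot 3^{g-1})$-general position, which is precisely the definition of weak $p$-generality for a colour system with $g-1$ colours. Your extra remarks (that deleting uncoloured vertices does not change the parameter list, and that $|\operatorname{U}(\mathcal{G})|=n-O(1)$ so the ``sufficiently large'' hypothesis transfers) are correct and make the bookkeeping slightly more explicit than the paper's version.
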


\begin{proof}
Let $S_1,\dots,S_m\su \operatorname{U}(\mathcal{G})$ be the $\operatorname{U}(\mathcal{G})$-neighbourhoods of each of the coloured vertices of $\mathcal{G}$ in each shade of their respective colours. Then $S_1,\dots,S_m\su \operatorname{U}(\mathcal{G})$ are in $(p, 3^{g-1})$-general position. Thus, by \cref{lem:sets-general-position-deletion}, whenever we delete $2^{a_1t_1+\dots+a_{g-1}t_{g-1}}$ vertices in $\operatorname{U}(\mathcal{G})$, the resulting configuration of sets is in $(p, 2\cdot 3^{g-1})$-general position.
\end{proof}

\begin{lem}\label{lem:extension-p-general}
Fix $g,a_1,\dots,a_{g}, t_1,\dots,t_{g-1}$. If $\mathcal{G}$ is a an essentially weakly $p$-general restricted colour system of order $n$ with parameters $(g, a_1,\dots,a_g, t_1,\dots,t_{g-1})$, then the colour system $\mathcal{G_S}$ in \cref{defn:extension-restricted-colour-system} is $p$-general with probability  $1-n^{-\omega(1)}$.
\end{lem}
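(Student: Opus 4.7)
The plan is to reduce the claim to an iterated application of \cref{lem:sets-general-position-addition}. Let $\mathcal{G}$ have parameters $(g, a_1,\dots,a_g, t_1,\dots,t_{g-1})$ and put $m=a_1t_1+\dots+a_{g-1}t_{g-1}$. Let $S_1,\dots,S_m\subseteq \operatorname{U}(\mathcal{G})$ be the $\operatorname{U}(\mathcal{G})$-neighbourhoods in each shade of the coloured vertices of colours $1,\dots,g-1$. By the assumption that $\mathcal{G}$ is essentially weakly $p$-general, the sets $S_1,\dots,S_m$ are in $(p,\,2\cdot 3^{g-1})$-general position.

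Next I would observe that because $\mathcal{G}$ is a \emph{restricted} colour system, no edge of colour $g$ joins a coloured vertex to an uncoloured vertex of $\mathcal{G}$. Consequently, after forming $\mathcal{G_S}$, the $\operatorname{U}(\mathcal{G_S})=\operatorname{U}(\mathcal{G})$-neighbourhood of a colour-$g$ vertex $v$ in the unique shade of colour $g$ is exactly the random set $S_v$ from \cref{defn:extension-restricted-colour-system}. Thus the family of $\operatorname{U}(\mathcal{G_S})$-neighbourhoods of all coloured vertices of $\mathcal{G_S}$ in all their shades is precisely $S_1,\dots,S_m$ together with the $a_g$ independently sampled $p$-random sets $\{S_v: v\text{ has colour }g\}$, and $\mathcal{G_S}$ is $p$-general if and only if this combined family is in $(p,3^g)$-general position.

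Now I would apply \cref{lem:sets-general-position-addition} iteratively, enumerating the colour-$g$ vertices as $v_1,\dots,v_{a_g}$ and adding the random sets $S_{v_1},\dots,S_{v_{a_g}}$ one at a time. Since the lemma preserves the constant $K$ and the sampling of $S_{v_k}$ is independent of the previously exposed sets (which determine whether the system so far is in general position), each step keeps the family in $(p,\,2\cdot 3^{g-1})$-general position with probability $1-n^{-\omega(1)}$. A union bound over the constant number $a_g$ of additions gives that, with probability $1-n^{-\omega(1)}$, the final family $S_1,\dots,S_m,S_{v_1},\dots,S_{v_{a_g}}$ is in $(p,\,2\cdot 3^{g-1})$-general position, hence in $(p,3^g)$-general position since $2\cdot 3^{g-1}\le 3^g$. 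This is exactly what it means for $\mathcal{G_S}$ to be $p$-general.

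There is no real obstacle here: the only mild point to verify carefully is the bookkeeping that the $U$-neighbourhoods in $\mathcal{G_S}$ are the right family of sets (which uses the restricted-colour-system hypothesis to ensure the colour-$g$ vertices contribute no pre-existing $U$-edges), and that the constant $K=2\cdot 3^{g-1}$ supplied by the essentially-weakly-$p$-general hypothesis is slack enough to absorb the final conclusion at the $(p,3^g)$ threshold. Everything else is a direct invocation of \cref{lem:sets-general-position-addition} together with a union bound of constant size.
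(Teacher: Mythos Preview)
Your approach is essentially the same as the paper's, but there is one small gap in the bookkeeping. The hypothesis ``essentially weakly $p$-general'' says that the colour system $\mathcal{G}'$ obtained by \emph{ignoring} colour $g$ is weakly $p$-general. In $\mathcal{G}'$, the $a_g$ vertices that had colour $g$ become uncoloured, so $\operatorname{U}(\mathcal{G}')=\operatorname{U}(\mathcal{G})\cup\{\text{colour-}g\text{ vertices}\}$, and the $(p,2\cdot 3^{g-1})$-general position you get from the hypothesis is for the $\operatorname{U}(\mathcal{G}')$-neighbourhoods as subsets of the ground set $\operatorname{U}(\mathcal{G}')$, not $\operatorname{U}(\mathcal{G})$. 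Your first sentence asserts general position directly in $\operatorname{U}(\mathcal{G})$, which does not literally follow.

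The fix is exactly what the paper does: invoke \cref{lem:sets-general-position-deletion} to pass from $\operatorname{U}(\mathcal{G}')$ to $\operatorname{U}(\mathcal{G})$ by deleting the $a_g=O(1)$ colour-$g$ vertices. That lemma requires strictly increasing the constant, so one takes $K'=3^g>2\cdot 3^{g-1}$ and obtains that $S_1\cap\operatorname{U}(\mathcal{G}),\dots,S_m\cap\operatorname{U}(\mathcal{G})$ are in $(p,3^g)$-general position in $\operatorname{U}(\mathcal{G})$. After this, your iterated application of \cref{lem:sets-general-position-addition} and the union bound go through verbatim (preserving $K=3^g$ throughout rather than $2\cdot 3^{g-1}$), and the conclusion is immediate. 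So the argument is correct once this ground-set change is inserted; the paper's proof differs from yours only in that it makes this deletion step explicit before adding the random sets.
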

\begin{proof}
Let $\mathcal{G}'$ be the colour system of order $n$ with parameters $(g-1, a_1,\dots,a_{g-1}, t_1,\dots,t_{g-1})$ obtained from the restricted colour system $\mathcal{G}$ by ignoring colour $g$ (so $\operatorname{U}(\mathcal{G}')$ consists of the set $\operatorname{U}(\mathcal{G})$, together with the $a_g$ vertices of colour $g$ in $\mathcal{G}$). By the assumption on $\mathcal G$, the colour system $\mathcal{G}'$ is weakly $p$-general. Let  $S_1,\dots,S_m\su \operatorname{U}(\mathcal{G}')$ be the $\operatorname{U}(\mathcal{G}')$-neighbourhoods of each of the coloured vertices of $\mathcal{G}'$ in each shade of their respective colours (so we have $m=a_1\cdot t_1+\dots+a_{g-1}\cdot t_{g-1}$). Then the sets $S_1,\dots,S_m\su \operatorname{U}(\mathcal{G}')$ are in $(p, 2\cdot 3^{g-1})$-general position. Note that $\vert \operatorname{U}(\mathcal{G})\vert=n-a_1+\dots+a_g=n-O(1)$ and $\vert \operatorname{U}(\mathcal{G}')\vert- |\operatorname{U}(\mathcal{G})|=a_g=O(1)$, and that $3^g>2\cdot 3^{g-1}$, so by \cref{lem:sets-general-position-deletion} (assuming $n$ is large), the sets $S_1\cap \operatorname{U}(\mathcal{G}),\dots,S_m\cap\operatorname{U}(\mathcal{G})\subseteq \operatorname{U}(\mathcal{G})$ are in $(p,3^g)$-general position. Applying \cref{lem:sets-general-position-addition} $a_g$ times, we see that with probability at least $1-a_g\cdot \vert \operatorname{U}(\mathcal{G})\vert^{-\omega(1)}=1-n^{-\omega(1)}$ these sets together with the random sets in $\mathcal{S}$ are in $(p,3^g)$-general position.
This proves \cref{lem:extension-p-general}.
\end{proof}

\section{Random neighbourhoods: \texorpdfstring{\cref{thm:general-subgraph-counts} implies \cref{cor:dispersed}}{Theorem~\ref{thm:general-subgraph-counts} implies Proposition~\ref{cor:dispersed}}}
\label{sect:thm-implies-cor}

In this section we will prove that if \cref{thm:general-subgraph-counts}  holds for some $1\leq g\leq h-1$, then \cref{cor:dispersed} also holds for this value of $g$.

Fix some $1\leq g\leq h-1$ and assume that \cref{thm:general-subgraph-counts} holds for this value of $g$. Let $a_1,\dots,a_g, t_1,\dots,t_{g-1}$ be arbitrary positive integers. Our goal in this section is to prove \cref{cor:dispersed} for these values of $g$, $a_1,\dots,a_g, t_1,\dots,t_{g-1}$.

For the entirety of this section, we fix these values of $g,a_1,\dots,a_g, t_1,\dots,t_{g-1}$, and define $T=t_1\dotsm t_{g-1}$. In all our asymptotics, these values will be treated as constants, while $n\to \infty$.

In \cref{subsec:dispersedness-prep} we will start with some preparations. First, we use a martingale concentration inequality to prove that $\psi_H(\mathcal{G_S}, G_0,\cdot)$ is very likely to be close to its conditional expectation $\mu_{\mathcal G, \mathcal S}$ given $\mathcal S$ (by symmetry, this conditional expectation actually only depends on the sizes of the intersections between the sets in $\mathcal S$ and the neighbourhoods of the various coloured vertices). Second, we state (but do not yet prove) an anticoncentration lemma for $\mu_{\mathcal G, \mathcal S}$, subject to the randomness in $\mathcal S$. 

In \cref{subsec:thm-implies-cor-key-lemma} we will use our preparatory lemmas and \cref{thm:general-subgraph-counts} to prove an anticoncentration bound for certain joint probabilities concerning the values of $\psi_H(\mathcal{G_S}, G_0, \cdot)$ for different choices of $\mathcal{S}$. This will be the key input for a moment argument (as outlined in \cref{sec:outline}) with which we will deduce that $G_0$ is very likely to be dispersed, proving the desired case of \cref{cor:dispersed}. The details of this deduction will be presented in \cref{subsec:cor-completion}.

\subsection{Preparations}
\label{subsec:dispersedness-prep}

\begin{defn}\label{defn:mu}
For a restricted colour system $\mathcal{G}$ with parameters $(g, a_1,\dots,a_g, t_1,\dots,t_{g-1})$ and an outcome of the random collection of sets $\mathcal{S}=(S_v)_{v}$  in \cref{defn:extension-restricted-colour-system}, let $\mu_{\mathcal{G}, \mathcal{S}}: [t_1]\times \dots\times [t_{g-1}]\times [1]\to \RR$ be the function given by
\[\mu_{\mathcal{G}, \mathcal{S}}(j_1,\dots,j_g)=\E_{G_0}[\psi_H(\mathcal{G_S}, G_0, j_1,\dots,j_g)]\]
for all $(j_1,\dots,j_g)\in [t_1]\times \dots\times [t_{g-1}]\times [1]$. Here, $G_0\sim\GG(\operatorname{U}(\mathcal{G}), p)$ is a random graph on the vertex set $\operatorname{U}(\mathcal{G})$.
\end{defn}

We remark that $\mu_{\mathcal{G}, \mathcal{S}}$ only depends on $\mathcal{G}$ and $\mathcal{S}$, and not $G_0$.

\begin{lem}
\label{lemma-graph-good} For any essentially $p$-general complete restricted colour system $\mathcal{G}$ which has parameters $(g, a_1,\dots,a_g, t_1,\dots,t_{g-1})$, the following holds. If we choose a random graph $G_0\sim\GG(\operatorname{U}(\mathcal{G}), p)$ and independently choose $\mathcal{S}$ randomly as in \cref{defn:extension-restricted-colour-system}, then
\[\Pr\left(\left\|\psi_H(\mathcal{G_S}, G_0,\cdot)-\mu_{\mathcal{G}, \mathcal{S}}\right\|_\infty\leq n^{h-g-1}\cdot \log n\right)=1-n^{-\omega(1)}.\]
\end{lem}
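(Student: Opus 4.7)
The plan is to apply the Azuma--Hoeffding inequality to the edge-exposure martingale for $G_0$, conditional on $\mathcal{S}$, and then average out the conditioning. Fix any $\mathcal{S}$ and any $(j_1,\dots,j_g)\in [t_1]\times\dots\times [t_{g-1}]\times [1]$. The random variable $\psi_H(\mathcal{G_S}, G_0, j_1,\dots,j_g)$ is a function of the at most $\binom{n}{2}$ independent Bernoulli indicators for the edges of $G_0$, and $\mu_{\mathcal{G},\mathcal{S}}(j_1,\dots,j_g)$ is its conditional expectation given $\mathcal S$.

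The key estimate is the Lipschitz constant of this function under a single edge flip. If we change the status of an edge $e=\{u,v\}$ inside $\operatorname{U}(\mathcal{G})$, the change in $\psi_H(\mathcal{G_S}, G_0, j_1,\dots,j_g)$ is bounded by the number of labelled copies of $H$ in $\mathcal{G}(G_0,j_1,\dots,j_g)$ which use the edge $e$ and contain at least one vertex of each of the $g$ colours. Any such copy of $H$ has two prescribed vertices (namely $u$ and $v$), and among its remaining $h-2$ vertices, at least $g$ must be coloured (one from each colour class, of which there are a bounded number). Thus at most $h-g-2$ vertices are ``free'' uncoloured vertices, giving a total of at most $O(n^{h-g-2})$ copies (interpreted as $O(1)$ when the exponent is non-positive, in which case the number is in fact $0$).

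With this Lipschitz constant $L=O(n^{h-g-2})$ and $N=O(n^{2})$ coin flips, Azuma--Hoeffding gives
\[
\Pr\bigl(|\psi_H(\mathcal{G_S}, G_0, j_1,\dots,j_g)-\mu_{\mathcal{G},\mathcal{S}}(j_1,\dots,j_g)|> n^{h-g-1}\log n \,\big|\, \mathcal{S}\bigr)\le 2\exp\!\left(-\frac{(n^{h-g-1}\log n)^2}{2NL^2}\right),
\]
and the exponent on the right is $\Omega((\log n)^2)$, so this probability is $n^{-\omega(1)}$. Taking a union bound over the $T=t_1\cdots t_{g-1}$ choices of $(j_1,\dots,j_g)$ (a constant) and then averaging over $\mathcal{S}$ yields the claimed bound.

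The only non-routine step is the Lipschitz estimate, but it follows directly from the ``at least one vertex of each colour'' requirement in the definition of $\psi_H$: this forces copies using a $G_0$-edge to spend two vertices on the edge plus at least $g$ vertices on the coloured classes, leaving only $h-g-2$ uncoloured vertices free. Everything else is a standard application of Azuma--Hoeffding, with no need to use either the $p$-generality or the completeness assumption on $\mathcal{G}$.
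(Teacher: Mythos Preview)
Your proof is correct and follows essentially the same approach as the paper: condition on $\mathcal{S}$, bound the single-edge Lipschitz constant by $O(n^{h-g-2})$ via the ``two fixed uncoloured vertices plus at least $g$ coloured vertices'' count, apply Azuma--Hoeffding to the edge-exposure martingale, and union-bound over the $T$ coordinates. Your observation that neither the $p$-generality nor the completeness hypothesis is used in this lemma is also accurate.
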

\begin{proof}
Condition on an arbitrary outcome of $\mathcal{S}$. By a union bound, it suffices to prove that for each $(j_1,\dots,j_g)\in [t_1]\times \dots\times [t_{g-1}]\times [1]$ the probability that
\begin{equation}\label{eqn:lemma-graph-good-to-show}
\left|\psi_H(\mathcal{G_S}, G_0, j_1,\dots,j_g)-\mu_{\mathcal{G}, \mathcal{S}}(j_1,\dots,j_g)\right|>n^{h-g-1}\cdot \log n
\end{equation}
is $n^{-\omega(1)}$. So fix some $(j_1,\dots,j_g)\in [t_1]\times \dots\times [t_{g-1}]\times [1]$. The expectation of $\psi_H(\mathcal{G_S}, G_0, j_1,\dots,j_g)$ is precisely $\mu_{\mathcal{G}, \mathcal{S}}(j_1,\dots,j_g)$ (recall that we are conditioning on an outcome of $\mathcal S$).

Note that changing the status of an edge of $G_0$ changes $\psi_H(\mathcal{G_S}, G_0, j_1,\dots,j_g)$ by at most $O(n^{h-g-2})$. This is because are at most $h^{g+2} \cdot a_1\dotsm a_{g}\cdot n^{h-g-2}=O(n^{h-g-2})$ different labelled copies of $H$ in the $n$-vertex graph $\mathcal{G_S}(G_0, j_1,\dots,j_g)$ which contain any particular pair of vertices of $\operatorname{U}(\mathcal G)$, and contain at least one vertex of each of the $g$ colours. Thus, by the Azuma--Hoeffding inequality (see for example \cite[Theorem~7.2.1]{alon-spencer}) with the edge-exposure martingale, the probability that \cref{eqn:lemma-graph-good-to-show} occurs is at most
\[\exp\left(-\Omega\left(\frac{n^{2h-2g-2}\cdot (\log n)^{2}}{n^2\cdot n^{2(h-g-2)}}\right)\right)= \exp\left(-\Omega\left((\log n\right)^{2})\right)= n^{-\omega(1)}.\]
This finishes the proof of \cref{lemma-graph-good}.
\end{proof}

Recall that we fixed $g,h,a_1,\dots,a_g$ and $t_1,\dots,t_{g-1}$ throughout this section, and that we defined $T=t_1\dotsm t_{g-1}$.

\begin{lem}\label{lem:medium-scale} For any essentially $p$-general complete restricted colour system $\mathcal{G}$ of order $n$ with parameters $(g, a_1,\dots,a_g, t_1,\dots,t_{g-1})$ and any $\lambda: [t_1]\times \dots\times [t_{g-1}]\times [1]\to \ZZ$ the following holds. If we choose $\mathcal{S}$ randomly as in \cref{defn:extension-restricted-colour-system} then we have
\[\Pr\left(\left\Vert\mu_{\mathcal{G}, \mathcal{S}}-\lambda\right\Vert_\infty\leq n^{h-g-1}\cdot \log n\right)\leq n^{-T/2+o(1)}.\]
\end{lem}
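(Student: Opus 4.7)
The plan is to apply \cref{thm:rough-halasz} in dimension $d=T$ to the random vector $\boldsymbol{f}(\boldsymbol{\xi}):=\mu_{\mathcal{G},\mathcal{S}}\in\RR^T$, where the Bernoulli variables $\xi_{v,u}=\one[u\in S_v]$ are indexed by pairs $(v,u)$ with $v$ a colour-$g$ vertex of $\mathcal{G}$ and $u\in\operatorname{U}(\mathcal{G})$; there are $N:=a_g\cdot|\operatorname{U}(\mathcal{G})|=\Theta(n)$ of them. Setting $s:=n^{h-g-1}$ and $r:=2n^{h-g-3/2}\sqrt{\log n}$, one checks that $r\sqrt{N\log N}\ge n^{h-g-1}\log n$ (matching the window in the statement), $r\sqrt{N\log N}\ge s$, and $(r\sqrt{\log N}/s)^T=n^{-T/2+o(1)}$, so the conclusion of \cref{thm:rough-halasz} is precisely the bound claimed in the lemma.

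To supply the non-degeneracy hypothesis, classify each uncoloured vertex $u$ by its \emph{type} $\tau(u)$, recording for each coloured vertex of $\mathcal{G}$ of colour $i\le g-1$ which shades of colour $i$ have an edge to $u$. There are $O(1)$ possible types, and by essential $p$-generality combined with \cref{lem:sets-general-position-intersection} each realised type is realised by $\Omega(n)$ uncoloured vertices; hence for each colour-$g$ vertex $v$ and realised type $\tau$ the set $I_{v,\tau}:=\{(v,u):\tau(u)=\tau\}$ has $|I_{v,\tau}|=\Omega(N)$. Now $\Delta_{v,u}\mu_{\mathcal{G},\mathcal{S}}(j_1,\dots,j_{g-1},1)$ is the $G_0$-expected number of labelled copies of $H$ in the modified graph that use the new edge $(v,u)$ and cover all $g$ colours; its dominant contribution, of order $s=n^{h-g-1}$, comes from embeddings in which some $a,b\in V(H)$ with $ab\in E(H)$ are mapped to $v,u$, exactly $g-1$ further vertices of $H$ are placed in the colour classes $1,\dots,g-1$ (one per colour), and the remaining $h-g-1$ vertices lie in $\operatorname{U}(\mathcal{G})\setminus\{u\}$. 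This leading part depends on $\mathcal{S}$ only through $S_v$, so after replacing the few fluctuating quantities appearing in sub-leading terms by their expectations it defines a deterministic vector $\boldsymbol{v}_{v,\tau}\in\RR^T$. A standard Chernoff bound on those sub-leading fluctuations then gives $\Pr\bigl(\|\Delta_{v,u}\mu_{\mathcal{G},\mathcal{S}}-s\boldsymbol{v}_{v,\tau}\|_\infty\ge r\bigr)\le n^{-\omega(1)}\le n^{-6T^2}$, as required.

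The main obstacle is verifying that the $O(1)$ direction vectors $\{\boldsymbol{v}_{v,\tau}\}_{v,\tau}$ span all of $\RR^T=\RR^{t_1\dotsm t_{g-1}}$. Intuitively, the completeness of $\mathcal{G}$ guarantees that every combinatorially possible $\tau$ (adjacency pattern to the coloured vertices of colours $<g$) is realised, and the $a_g$ choices of $v$ at colour $g$ further enrich the collection of derivative directions, so this $O(1)$-sized family should cover $\RR^T$. Making this rigorous is the purpose of the ``core'' formalism introduced in \cref{sec:cores}: one expresses each $\boldsymbol{v}_{v,\tau}$ as a suitable combination of core-indexed vectors and invokes the linear-independence lemma proved there, with the final combinatorial verification carried out in \cref{sec:halasz-prep}. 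Once spanning is in hand, \cref{thm:rough-halasz} immediately delivers $\Pr(\|\mu_{\mathcal{G},\mathcal{S}}-\lambda\|_\infty<r\sqrt{N\log N})\le n^{-T/2+o(1)}$, which completes the proof.
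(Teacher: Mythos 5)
Your proposal is correct and follows essentially the same route as the paper: encode $\mathcal{S}$ as a Bernoulli vector of length $N=a_g\cdot|\operatorname{U}(\mathcal{G})|$, observe that the discrete derivatives of $\mu_{\mathcal{G},\mathcal{S}}$ are (whp, via $p$-generality of $\mathcal{G_S}$) within $O(n^{h-g-3/2}\log n)$ of $n^{h-g-1}$ times core-indexed vectors $\Gamma_{\mathcal{C},e}$, invoke \cref{lem:the-Gamma-e-span} for spanning, and apply \cref{thm:rough-halasz} with $s=n^{h-g-1}$. The only (immaterial) difference is that you partition the uncoloured vertices by adjacency type, whereas the paper simply takes $I_e=\operatorname{U^*}(\mathcal{G})\times\{v\}$ and lets the $a_g$ colour-$g$ vertices of the complete core supply the needed diversity of directions.
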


We defer the proof of \cref{lem:medium-scale} to \cref{sec:halasz-prep}.

\subsection{Joint anticoncentration}
\label{subsec:thm-implies-cor-key-lemma}

The following statement is the key lemma for the moment argument we will use to finish the proof of \cref{cor:dispersed}.

\begin{lem}\label{lem:key-lemma-proof-cor}
Fix $t\in \NN$. Then for any essentially $p$-general complete restricted colour system $\mathcal{G}$ of order $n$ with parameters $(g, a_1,\dots,a_g, t_1,\dots,t_{g-1})$, and any function $\lambda: [t_1]\times \dots\times [t_{g-1}]\times [1]\to \ZZ$, the following holds. If we choose, all independently, a random graph $G_0\sim\GG(\operatorname{U}(\mathcal{G}), p)$ on the vertex set $\operatorname{U}(\mathcal{G})$ as well as $t$ random collections $\mathcal{S}_1,\dots, \mathcal{S}_t$ chosen as in \cref{defn:extension-restricted-colour-system}, then
\[
\Pr\left(\psi_H(\mathcal{G}_{\mathcal{S}_i}, G_0, \cdot)=\lambda\text{ for all }i=1,\dots,t\right)\le n^{(g-h+(1/2))\cdot T\cdot t+o(1)}.
\]
\end{lem}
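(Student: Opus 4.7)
The plan is to encode the $t$ independent extensions as shades of a single combined colour system, so that the joint anticoncentration question reduces to a standard anticoncentration question to which \cref{thm:general-subgraph-counts} (at the inductively available level $g$) applies. Concretely, I form a colour system $\hat{\mathcal{G}}$ with parameters $(g, a_1,\dots,a_g, t_1,\dots,t_{g-1}, t)$ by taking $\mathcal{G}$ together with its colour-$g$ vertices, placing a copy of every internal colour-$g$ edge (between two colour-$g$ vertices of $\mathcal{G}$) in each of the $t$ shades of colour $g$, and for each $i\in [t]$ installing the random edges from $\mathcal{S}_i$ as shade $i$ of colour $g$. Unrolling the definitions gives the identity $\psi_H(\mathcal{G}_{\mathcal{S}_i}, G_0, j_1,\dots,j_{g-1}, 1) = \psi_H(\hat{\mathcal{G}}, G_0, j_1,\dots,j_{g-1}, i)$ for every $(j_1,\dots,j_{g-1})$ and every $i\in [t]$, so the event in the lemma is exactly $\psi_H(\hat{\mathcal{G}}, G_0, \cdot)=\hat{\lambda}$ for the natural extension $\hat{\lambda}$ of $\lambda$ to the larger index set of size $Tt$.

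Next I will verify that $\hat{\mathcal{G}}$ is complete and, with probability $1-n^{-\omega(1)}$, $p$-general. Completeness is automatic, since the completeness condition for colour $i\le g$ only refers to shades of colours $<i$, which are identical in $\mathcal{G}$ and $\hat{\mathcal{G}}$. For $p$-generality, the coloured-vertex neighbourhoods in $\operatorname{U}(\hat{\mathcal{G}})=\operatorname{U}(\mathcal{G})$ come in two flavours: those inherited from $\mathcal{G}$ (which, after stripping off the $a_g$ colour-$g$ vertices via \cref{lem:sets-general-position-deletion} applied to the essential $p$-generality of $\mathcal{G}$, are in $(p, 3^g)$-general position) and the $ta_g$ fresh random sets drawn in forming the $\mathcal{S}_i$'s. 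Iterating \cref{lem:sets-general-position-addition} a constant number of times ($t$ and $a_g$ are fixed) folds these random sets into the general-position family, so $\hat{\mathcal{G}}$ is $p$-general with probability $1-n^{-\omega(1)}$. On this high-probability event, applying \cref{thm:general-subgraph-counts} at level $g$ to $\hat{\mathcal{G}}$ yields
\[
\Pr_{G_0}\!\left(\psi_H(\hat{\mathcal{G}}, G_0, \cdot)=\hat{\lambda} \,\middle|\, \mathcal{S}_1,\dots,\mathcal{S}_t\right)\le n^{(g-h+1)\cdot Tt+o(1)}.
\]

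On its own this is weaker than the target by a factor of $n^{Tt/2}$; the missing saving will come from the medium-scale anticoncentration of \cref{lem:medium-scale}. By \cref{lemma-graph-good} applied to each $i$ together with a union bound, with probability $1-n^{-\omega(1)}$ we have $\|\psi_H(\mathcal{G}_{\mathcal{S}_i},G_0,\cdot)-\mu_{\mathcal{G},\mathcal{S}_i}\|_\infty\le n^{h-g-1}\log n$ simultaneously for every $i$; outside this negligible bad event, the event of the lemma forces the medium-scale event $\|\mu_{\mathcal{G},\mathcal{S}_i}-\lambda\|_\infty\le n^{h-g-1}\log n$ for every $i$. Since $\mu_{\mathcal{G},\mathcal{S}_i}$ depends only on $\mathcal{S}_i$ and the $\mathcal{S}_i$'s are mutually independent, \cref{lem:medium-scale} applied to each factor gives that this medium-scale event has probability at most $\prod_{i=1}^t n^{-T/2+o(1)} = n^{-Tt/2+o(1)}$. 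Conditioning on $\mathcal{S}_1,\dots,\mathcal{S}_t$ first, absorbing into an additive $n^{-\omega(1)}$ error both the rare failure of \cref{lemma-graph-good} and the rare failure of $\hat{\mathcal{G}}$ to be $p$-general, and combining the two estimates above, I obtain
\[
\Pr\!\left(\psi_H(\mathcal{G}_{\mathcal{S}_i}, G_0, \cdot)=\lambda\text{ for all }i\right)\le n^{(g-h+1)Tt+o(1)}\cdot n^{-Tt/2+o(1)}+n^{-\omega(1)} = n^{(g-h+1/2)Tt+o(1)},
\]
as required.

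Most of the work is bookkeeping. The main conceptual step is the observation that the $t$ independent extensions can be packaged into a single colour system so that the joint event becomes the single-colour-system event $\psi_H(\hat{\mathcal{G}},G_0,\cdot)=\hat{\lambda}$, after which the extra saving over the naive induction bound falls out by pairing \cref{thm:general-subgraph-counts} with the medium-scale estimate \cref{lem:medium-scale} (exploiting the independence of the $\mathcal{S}_i$'s). The only mildly delicate detail is confirming $p$-generality and completeness of $\hat{\mathcal{G}}$, which is a clean application of the general-position lemmas from \cref{sec:general-position}.
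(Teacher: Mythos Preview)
Your proposal is correct and follows essentially the same approach as the paper's proof: you build the combined colour system (the paper calls it $\mathcal{G}^*$), establish the identity \cref{eq:key-lemma-0}, verify completeness and $p$-generality, and then multiply the bound from \cref{thm:general-subgraph-counts} (conditioned on the $\mathcal{S}_i$) against the independent medium-scale bound from \cref{lem:medium-scale}, absorbing the failure events into $n^{-\omega(1)}$. The only cosmetic difference is that the paper invokes \cref{lem:extension-p-general} for the first batch of random sets before iterating \cref{lem:sets-general-position-addition}, whereas you unpack that step directly via \cref{lem:sets-general-position-deletion}.
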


\begin{proof}
Fix an essentially $p$-general complete restricted colour system $\mathcal{G}$ of order $n$ with parameters $(g, a_1,\dots,a_g, t_1,\dots,t_{g-1})$, and a function $\lambda: [t_1]\times \dots\times [t_{g-1}]\times [1]\to \ZZ$.

For an outcome of the random collections $\mathcal{S}_1,\dots, \mathcal{S}_t$, we obtain a colour system $\mathcal{G}^*$ with parameters $(g, a_1,\dots,a_g, t_1,\dots,t_{g-1}, t)$ as follows. Recall that by \cref{defn:restricted-colour-system}, $\mathcal{G}$ has only one shade of colour $g$ and all edges of colour $g$ are between the vertices of colour $g$. Replace each of these edges by $t$ parallel edges with all shades $\{1,\dots,t\}$ of colour $g$. Also, for each $i=1,\dots,t$ and each vertex $v$ with colour $g$, add edges, in shade $i$ of colour $g$, between $v$ and all vertices in the set $S_{i,v}$ (where $S_{i,v}$ is the set corresponding to the vertex $v$ in the collection $\mathcal{S}_i$). We emphasise that $\mathcal{G}^*$ only depends on the random choices of $\mathcal{S}_1,\dots, \mathcal{S}_t$, and not on the random choice of $G_0\sim\GG(\operatorname{U}(\mathcal{G}), p)$.

Note that $\operatorname{U}(\mathcal{G}^*)=\operatorname{U}(\mathcal{G})$ and that for any $i=1,\dots, t$, deleting all edges of colour $g$ except those of shade $i$ yields precisely the colour system $\mathcal{G}_{\mathcal{S}_i}$ with parameters $(g, a_1,\dots,a_g, t_1,\dots,t_{g-1}, 1)$. Thus, for any outcome of $G_0\sim\GG(\operatorname{U}(\mathcal{G}), p)$, we have
\begin{equation}\label{eq:key-lemma-0}
\psi_H(\mathcal{G}_{\mathcal{S}_i}, G_0, j_1,\dots,j_{g-1},1)=\psi_H(\mathcal{G}^*, G_0, j_1,\dots,j_{g-1},i)
\end{equation}
for all $(j_1,\dots,j_{g-1})\in [t_1]\times\dots\times [t_{g-1}]$ and all $i=1,\dots,t$. Also note that $\mathcal{G}^*$ is always complete, because $\mathcal{G}$ is complete and being complete does not depend on the edges of colour $g$.

Say that an outcome of $(G_0,\mathcal{S}_1,\dots, \mathcal{S}_t)$ is \emph{common} if both of the following conditions hold.
\begin{itemize}
\item[(i)] we have
\[\left\|\psi_H(\mathcal{G}_{\mathcal{S}_i}, G_0, \cdot)-\mu_{\mathcal{G}, \mathcal{S}_i}\right\|_\infty\leq n^{h-g-1}\cdot \log n\]
for all $i=1,\dots,t$;
\item[(ii)] the colour system $\mathcal{G}^*$ given by $\mathcal{G}$ and $\mathcal{S}_1,\dots, \mathcal{S}_t$ is $p$-general.
\end{itemize}

\begin{claim}
\label{claim:common-outcome}
$(G_0,\mathcal{S}_1,\dots, \mathcal{S}_t)$ is common with probability $1-n^{-\omega(1)}$.
\end{claim}

\begin{proof}
By \cref{lemma-graph-good}, for each $i=1,\dots,t$ the probability that (i) fails is at most $n^{-\omega(1)}$. Thus, (i) holds with probability at least $1-t\cdot n^{-\omega(1)}=1-n^{-\omega(1)}$.

For (ii), note that by \cref{lem:extension-p-general} the colour system $\mathcal{G}_{\mathcal{S}_1}$ is $p$-general with probability $1-n^{-\omega(1)}$. We can then apply \cref{lem:sets-general-position-addition} $(t-1)\cdot a_g$ times for all the additional sets in $\mathcal S_2,\dots,\mathcal S_t$.
\end{proof}

Now, in order to prove \cref{lem:key-lemma-proof-cor}, we need to bound the probability that
\begin{equation}\label{eq:key-lemma-1}
\psi_H(\mathcal{G}_{\mathcal{S}_i}, G_0,\cdot)=\lambda\text{ for all $i\in [t]$.}
\end{equation}
We remark that if we condition on an outcome for $\mathcal{S}_1,\dots, \mathcal{S}_t$ satisfying (ii), then one can apply \cref{thm:general-subgraph-counts} to get an upper bound on this probability (using only the randomness of $G_0$). However, this bound is weaker than the one claimed in \cref{lem:key-lemma-proof-cor}. We will be able to obtain a stronger bound by using the randomness of both $G_0$ and of $\mathcal{S}_1,\dots, \mathcal{S}_t$. For the argument, both of the conditions (i) and (ii) will be relevant.

Whenever the outcome of $(G_0,\mathcal{S}_1,\dots, \mathcal{S}_t)$ is common (specifically, when (i) is satisfied), in order to satisfy \cref{eq:key-lemma-1} we must have
\begin{equation}\label{eq:key-lemma-new}
\left\Vert\mu_{\mathcal{G}, \mathcal{S}_i}-\lambda\right\Vert_\infty\leq n^{h-g-1}\cdot \log n\text{ for }i=1,\dots,t.
\end{equation}
Note that \cref{eq:key-lemma-new} does not depend on $G_0$, only on the random sets in $\mathcal S_1,\dots,\mathcal S_t$. 
By \cref{lem:medium-scale}, for each $i=1,\dots,t$ the probability of having $\left\Vert\mu_{\mathcal{G}, \mathcal{S}_i}-\lambda\right\Vert_\infty\leq n^{h-g-1}\cdot \log n$ is at most $n^{-T/2+o(1)}$. So, by the independence of the $\mathcal S_i$, the probability that \cref{eq:key-lemma-new} holds is bounded as follows.
\begin{equation}\label{eq:key-lemma-2}
\Pr\left(\left\Vert\mu_{\mathcal{G}, \mathcal{S}_i}-\lambda\right\Vert_\infty\leq n^{h-g-1}\cdot \log n\text{ for }i=1,\dots,t\right)\leq \left( n^{-T/2+o(1)}\right)^t=n^{-T\cdot t/2+o(1)}.
\end{equation}
Now, fix any outcomes of $\mathcal{S}_1,\dots, \mathcal{S}_t$ 
satisfying (ii) and \cref{eq:key-lemma-new}. By condition (ii), the colour system $\mathcal{G}^*$ is $p$-general, and furthermore recall that $\mathcal{G}^*$ is complete. Hence, applying \cref{thm:general-subgraph-counts} to the colour system $\mathcal{G}^*$ (which has parameters $(g, a_1,\dots,a_g, t_1,\dots,t_{g-1}, t)$), conditioned on our outcomes of $\mathcal{S}_1,\dots, \mathcal{S}_t$, we have
\[\Pr\left(\psi_H(\mathcal{G^*}, G_0,\cdot,i)=\lambda(\cdot,1)\text{ for }i=1,\dots,t\right)\le n^{(g-h+1)\cdot t_1\dotsm t_{g-1}\cdot t+o(1)}=n^{(g-h+1)\cdot T\cdot t+o(1)}.\]
(Recall that in this section we are assuming that \cref{thm:general-subgraph-counts} holds for $g$).
In other words, recalling \cref{eq:key-lemma-0}, if we condition on any outcomes of $\mathcal{S}_1,\dots, \mathcal{S}_t$ such that 
(ii) and \cref{eq:key-lemma-new} hold,
then the random choice $G_0\sim\GG(\operatorname{U}(\mathcal{G}), p)$ satisfies \cref{eq:key-lemma-1} with probability at most $n^{(g-h+1)\cdot T\cdot t+o(1)}$.

Combining this with \cref{eq:key-lemma-2}, we see that the probability that $(G_0,\mathcal{S}_1,\dots, \mathcal{S}_t)$ simultaneously satisfies \cref{eq:key-lemma-new}, (ii) and \cref{eq:key-lemma-1} is at most
\[n^{-T\cdot t/2+o(1)}\cdot n^{(g-h+1)\cdot T\cdot t+o(1)}=n^{(g-h+(1/2))\cdot T\cdot t+o(1)}.\]
Recall that any common outcome satisfying condition \cref{eq:key-lemma-1} also needs to satisfy \cref{eq:key-lemma-new}, and recall from \cref{claim:common-outcome} that $(G_0,\mathcal{S}_1,\dots, \mathcal{S}_t)$ is common with probability at least $1-n^{-\omega(1)}$. Thus, the total probability that \cref{eq:key-lemma-1}  holds is at most $n^{(g-h+(1/2))\cdot T\cdot t+o(1)}+n^{-\omega(1)}=n^{(g-h+(1/2))\cdot T\cdot t+o(1)}$. This finishes the proof of \cref{lem:key-lemma-proof-cor}.
\end{proof}

In the proof of \cref{cor:dispersed}, we will use the following corollary of \cref{lem:key-lemma-proof-cor}. In contrast to the setting of \cref{lem:key-lemma-proof-cor}, in \cref{lem:key-lemma-proof-cor-cor} we will not require $t\in \NN$ to be fixed, but just that $n$ is sufficiently large with respect to $t$ (and with respect to the values for $g,h,a_1,\dots,a_g$ and $t_1,\dots,t_{g-1}$ that we fixed for this entire section). Also note that the statement of \cref{lem:key-lemma-proof-cor-cor} does not contain any asymptotic notation.

\begin{cor}
\label{lem:key-lemma-proof-cor-cor}
As before, let $g,h,a_1,\dots,a_g$ and $t_1,\dots,t_{g-1}$ be fixed. Then for every $t\in \NN$, there exists $N(t)\in \NN$ such that the following holds for any essentially $p$-general complete restricted colour system $G$ of order $n\geq N(t)$ with parameters $(g,a_1,\dots,a_g,t_1,\dots,t_{g-1})$, and any function $\lambda: [t_1]\times \dots\times [t_{g-1}]\times [1]\to \ZZ$. If we choose, all independently, a random graph $G_0\sim\GG(\operatorname{U}(\mathcal{G}), p)$ on the vertex set $\operatorname{U}(\mathcal{G})$ as well as $t$ random collections $\mathcal{S}_1,\dots, \mathcal{S}_t$ chosen as in \cref{defn:extension-restricted-colour-system}, then
\[
\Pr\left(\psi_H(\mathcal{G}_{\mathcal{S}_i}, G_0, \cdot)=\lambda\text{ for all }i=1,\dots,t\right)\le n^{(g-h+(1/2))\cdot T\cdot t+1}.
\]
\end{cor}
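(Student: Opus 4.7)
The approach is straightforward: Corollary \ref{lem:key-lemma-proof-cor-cor} is simply the quantitative version of Lemma \ref{lem:key-lemma-proof-cor} obtained by unpacking the definition of the $o(1)$-asymptotics. In Lemma \ref{lem:key-lemma-proof-cor}, the integer $t$ is treated as a fixed constant along with $g, h, a_1,\dots,a_g, t_1,\dots,t_{g-1}$, so the implicit $o(1)$-term in the exponent may depend on all of these parameters (in particular on $t$), but it tends to zero as $n \to \infty$.

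The plan is as follows. Fix $t \in \NN$ and apply Lemma \ref{lem:key-lemma-proof-cor}, treating $t$ as an additional constant. For each essentially $p$-general complete restricted colour system $\mathcal{G}$ of order $n$ with the specified parameters, and each function $\lambda$, the lemma gives a bound of the form $n^{(g-h+(1/2))\cdot T\cdot t + \varepsilon_{\mathcal G, \lambda}(n)}$, where $\varepsilon_{\mathcal G,\lambda}(n) \to 0$ as $n\to\infty$. To convert this into a uniform statement of the type required, note that since $g,h,a_1,\dots,a_g,t_1,\dots,t_{g-1}$ and $t$ are all fixed, and since the conclusion of Lemma \ref{lem:key-lemma-proof-cor} makes no reference to $\mathcal{G}$ or $\lambda$ beyond the stated assumptions, the $o(1)$-term can be taken uniformly over all valid choices of $\mathcal{G}$ and $\lambda$ (this is implicit in how the asymptotic notation is used throughout the paper).

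Thus, by the definition of $o(1)$, there exists a threshold $N(t) \in \NN$ such that for every $n \geq N(t)$, the $o(1)$-exponent in the bound of Lemma \ref{lem:key-lemma-proof-cor} is at most $1$, uniformly in the choice of $\mathcal{G}$ and $\lambda$. Consequently, for $n \geq N(t)$, the probability in question is bounded above by $n^{(g-h+(1/2))\cdot T\cdot t+1}$, as required.

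The main (and only) ``obstacle'' is really just a bookkeeping point: one must observe that the $o(1)$-term in Lemma \ref{lem:key-lemma-proof-cor} is uniform in $\mathcal G$ and $\lambda$ (though it does depend on the fixed constants, including $t$). This uniformity is clear from inspection of the proof of Lemma \ref{lem:key-lemma-proof-cor}, since the two $n^{\bullet+o(1)}$-style bounds that are multiplied together (the bound coming from Lemma \ref{lem:medium-scale} and the bound coming from the inductive hypothesis Theorem \ref{thm:general-subgraph-counts}) are themselves uniform in the corresponding inputs. No additional ideas are needed.
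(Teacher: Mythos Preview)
Your proof is correct and follows essentially the same approach as the paper: fix $t$, apply Lemma~\ref{lem:key-lemma-proof-cor}, and choose $N(t)$ large enough that the $o(1)$-term in the exponent is at most $1$. Your explicit remark about uniformity of the $o(1)$-term in $\mathcal{G}$ and $\lambda$ is a useful clarification but is indeed implicit in the paper's asymptotic conventions.
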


\begin{proof}
For each $t\in \NN$, the $o(1)$-term in the statement of \cref{lem:key-lemma-proof-cor} with this particular value of $t$ converges to zero (as $n$ goes to infinity). Hence there is some $N(t)\in \NN$ such that this $o(1)$-term is at most $1$ for all $n\geq N(t)$. In other words, this means that whenever $n\geq N(t)$, the probability appearing in the statements of \cref{lem:key-lemma-proof-cor} and \cref{lem:key-lemma-proof-cor-cor} is indeed at most $n^{(g-h+(1/2))\cdot T\cdot t+1}$, as desired.
\end{proof}

\subsection{Completing the proof of \texorpdfstring{\cref{cor:dispersed}}{Proposition~\ref{cor:dispersed}}}
\label{subsec:cor-completion}

In this subsection, we will finally deduce \cref{cor:dispersed} for our given value of $g$. In order to do so, we will apply \cref{lem:key-lemma-proof-cor-cor} with a carefully chosen value of $t$. Recall that $t$ is not required to be fixed in \cref{lem:key-lemma-proof-cor-cor}, it is only required that $n$ is sufficiently large with respect to $t$ (and the values of $g,a_1,\dots,a_g$ and $t_1,\dots,t_{g-1}$ that we fixed throughout this section). Let $N:\NN\to\NN$ be a function mapping each $t\in \NN$ to some $N(t)\in \NN$ such that \cref{lem:key-lemma-proof-cor-cor} holds.

In order to prove \cref{cor:dispersed}, we can assume that $n\geq N(1)$. For any integer $n\geq N(1)$, let us define $t(n)$ to be the maximum $t\in \NN$ with $N(t)\leq n$. Note that then we have $N(t(n))\leq n$ for all $n\geq N(1)$, and that $t(n)=\omega(1)$.

Define the function $\delta$ by $\delta(n)=t(n)^{-1/2}$ for all $n\geq N(1)$, and observe that $\lim_{n\to\infty}\delta(n)=0$. Let $\mathcal{G}$ be any essentially $p$-general complete restricted colour system $\mathcal{G}$ of order $n\geq N(1)$ with parameters $(g, a_1,\dots,a_g, t_1,\dots,t_{g-1})$. We need to show that a randomly chosen graph $G_0\sim\GG(\operatorname{U}(\mathcal{G}), p)$ on the vertex set $\operatorname{U}(\mathcal{G})$ is $(p,q,\mathcal{G})$-dispersed with probability $1-n^{-\omega(1)}$, where
\[q=n^{(g-h+(1/2))\cdot T+\delta(n)}.\]
Recall from \cref{defn:dispersedness} that a graph $G_0$ on the vertex set $\operatorname{U}(\mathcal{G})$ being $(p,q,\mathcal{G})$-dispersed means that for all functions $\lambda: [t_1]\times \dots\times [t_{g-1}]\times [1]\to \ZZ$ the following condition holds: when choosing $\mathcal{S}$ randomly as in \cref{defn:extension-restricted-colour-system}, $\Pr\left(\psi_H(\mathcal{G_S}, G_0,\cdot)=\lambda\right)\le q$. Let $A_{\lambda}$ be the event that $G_0$ fails to satisfy this condition (i.e. that the probability is strictly larger than $q$) for some particular $\lambda$. So, we need to show that with probability $1-n^{-\omega(1)}$, no $A_{\lambda}$ occurs. Note that we always have $0\le \psi_H(\mathcal{G_S}, G_0, j_1,\dots,j_g)\leq n^h$, so we only need to consider $(n^h+1)^T$ possibilities for $\lambda$. It therefore suffices to show that $\Pr(A_\lambda)=n^{-\omega(1)}$ for each $\lambda$: we then can take a union bound over all possibilities for $\lambda$. So, fix a function $\lambda: [t_1]\times \dots\times [t_{g-1}]\times [1]\to \lbrace 0,\dots,n^h\rbrace$.

Let $t=t(n)$. All independently, choose a random graph $G_0\sim\GG(\operatorname{U}(\mathcal{G}), p)$, and choose $t$ collections $\mathcal{S}_1,\dots, \mathcal{S}_t$ as in \cref{defn:extension-restricted-colour-system}. By \cref{lem:key-lemma-proof-cor-cor} (using that $N(t)=N(t(n))\leq n$), we have
\begin{equation}
\Pr\left(\psi_H(\mathcal{G}_{\mathcal{S}_i}, G_0, \cdot)=\lambda\text{ for all }i=1,\dots,t\right)\le n^{(g-h+(1/2))\cdot T\cdot t+1}.\label{eq:joint-probability}
\end{equation}
We remark that the probability on the left-hand side of \cref{eq:joint-probability} can be interpreted as the $t$-th moment of the random variable (depending on $G_0\sim\GG(\operatorname{U}(\mathcal{G}), p)$) measuring the conditional probability that $\psi_H(\mathcal{G}_{\mathcal{S}}, G_0,\cdot)=\lambda$, for a random choice of $\mathcal S$ as in \cref{defn:extension-restricted-colour-system}. The rest of the proof, to follow, can basically be interpreted as applying Markov's inequality with this $t$-th moment.

By the definition of $A_\lambda$, if we condition on an outcome of $G_0$ for which $A_\lambda$ holds, then for each $i$, with probability at least $q$ we have $\psi_H(\mathcal{G}_{\mathcal{S}_i}, G_0, \cdot)=\lambda$. Since the $\mathcal{S}_i$ are independent, we deduce that \[\Pr\left(\psi_H(\mathcal{G}_{\mathcal{S}_i}, G_0, \cdot)=\lambda\text{ for }i=1,\dots,t\right)\ge \Pr(A_{\lambda})\cdot \Pr\left(\psi_H(\mathcal{G}_{\mathcal{S}_i}, G_0, \cdot)=\lambda\text{ for } i=1,\dots,t\mid A_{\lambda}\right)\ge\Pr(A_{\lambda})q^t.\]

Together with \cref{eq:joint-probability}, this implies
\[\Pr\left(A_{\lambda}\right)\cdot q^t\leq n^{(g-h+(1/2))\cdot T\cdot t+1},\]
and therefore, recalling $q=n^{(g-h+(1/2))\cdot T+\delta(n)}$,
\[\Pr\left(A_{\lambda}\right)\leq \frac{n^{(g-h+(1/2))\cdot T\cdot t+1}}{n^{(g-h+(1/2))\cdot T\cdot t+\delta(n)\cdot t}}=n^{1-\delta(n)\cdot t}=n^{1-\sqrt t}=n^{-\omega(1)}.\]
(Recall that $\delta(n)=t(n)^{-1/2}=t^{-1/2}$ and that $t=t(n)=\omega(1)$). This concludes the proof.

\section{Completing the induction step: \texorpdfstring{\cref{cor:dispersed}}{Proposition~\ref{cor:dispersed}} for $g$ implies \texorpdfstring{\cref{thm:general-subgraph-counts}}{Theorem~\ref{thm:general-subgraph-counts}} for $g-1$}
\label{sect:cor-implies-thm}

For the duration of this section, we fix some $1\leq g\leq h-1$ and assume that \cref{cor:dispersed} holds for this value of $g$. Our goal for this section is to prove \cref{thm:general-subgraph-counts} for $g-1$. Our approach is as outlined in \cref{sec:outline}: we decompose $\psi_H(\mathcal{G}, G_0, \cdot)$ into two parts, use a ``coarse-scale'' anticoncentration lemma to handle the larger part, and use our assumed case of \cref{cor:dispersed} to handle the smaller part.

Let $a_1,\dots,a_{g-1}, t_1,\dots,t_{g-1}$ be arbitrary positive integers. Our goal is to prove \cref{thm:general-subgraph-counts} for $g-1$ and these values of $a_1,\dots,a_{g-1}, t_1,\dots,t_{g-1}$.

For this entire section, let us fix the values of $a_1,\dots,a_{g-1}, t_1,\dots,t_{g-1}$, and define $T=t_1\dotsm t_{g-1}$ and $a_g=2^{a_1t_1+\dots+a_{g-1}t_{g-1}}$. In all our asymptotics, these values will be treated as constants, while $n\to \infty$. In \cref{subsec:completion-prep} we will prove a concentration lemma which we will use to control the fluctuation of the smaller of our two parts. We also state (but do not yet prove) an anticoncentration lemma which we will use for the larger of our two parts. In \cref{subsec:completion-proof} we combine these lemmas with our assumed case of \cref{cor:dispersed} to prove our desired case of \cref{thm:general-subgraph-counts}. 

\subsection{Preparations}
\label{subsec:completion-prep}

\begin{defn}\label{defn:mu-G}
Consider an essentially $p$-general complete restricted colour system $\mathcal{G}$ with parameters $(g, a_1,\dots,a_g, t_1,\dots,t_{g-1})$. Let $\mu_{\mathcal{G}}: [t_1]\times \dots\times [t_{g-1}]\times [1]\to \RR$ be the function given by
\[\mu_{\mathcal{G}}(j_1,\dots,j_g)=\E_{G_0, \mathcal{S}}[\psi_H(\mathcal{G_S}, G_0, j_1,\dots,j_g)]\]
for all $(j_1,\dots,j_g)\in [t_1]\times \dots\times [t_{g-1}]\times [1]$. Here, $G_0\sim\GG(\operatorname{U}(\mathcal{G}), p)$ is a random graph on the vertex set $\operatorname{U}(\mathcal{G})$ and $\mathcal{S}=(S_v)_{v}$ is a randomly chosen collection of sets as in \cref{defn:extension-restricted-colour-system} (and $G_0$ and $\mathcal{S}$ are chosen independently).
\end{defn}

We remark that $\mu_{\mathcal{G}}$ only depends on $\mathcal{G}$.

\begin{lem}
\label{lemma-graph-good-2} The following holds for any essentially $p$-general complete restricted colour system $\mathcal{G}$ with parameters $(g, a_1,\dots,a_g, t_1,\dots,t_{g-1})$. If we choose a random graph $G_0\sim\GG(\operatorname{U}(\mathcal{G}), p)$ and independently choose $\mathcal{S}$ randomly as in \cref{defn:extension-restricted-colour-system}, then
\[\Pr\left(\left\|\psi_H(\mathcal{G_S}, G_0,\cdot)-\mu_{\mathcal{G}}\right\|_\infty\leq n^{h-g-(1/2)}\cdot \log n\right)=1-n^{-\omega(1)}.\]
\end{lem}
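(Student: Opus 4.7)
The plan is to split the deviation via the triangle inequality as $\psi_H(\mathcal{G_S}, G_0, \cdot) - \mu_{\mathcal{G}}= (\psi_H(\mathcal{G_S}, G_0, \cdot) - \mu_{\mathcal{G}, \mathcal{S}}) + (\mu_{\mathcal{G}, \mathcal{S}} - \mu_{\mathcal{G}})$. The first difference is already controlled by \cref{lemma-graph-good}, which gives $\Vert\psi_H(\mathcal{G_S}, G_0, \cdot) - \mu_{\mathcal{G}, \mathcal{S}}\Vert_\infty \le n^{h-g-1}\log n$ with probability $1-n^{-\omega(1)}$ (comfortably better than what is needed). It therefore suffices to show that $\Vert\mu_{\mathcal{G}, \mathcal{S}} - \mu_{\mathcal{G}}\Vert_\infty \le \tfrac{1}{2}n^{h-g-(1/2)}\log n$ with probability $1-n^{-\omega(1)}$, using only the randomness coming from $\mathcal{S}$.

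For this I plan to apply the Azuma--Hoeffding / bounded-differences inequality, viewing $\mu_{\mathcal{G}, \mathcal{S}}(j_1,\dots,j_g)$ (for any fixed $(j_1,\dots,j_g) \in [t_1]\times\dots\times[t_{g-1}]\times[1]$) as a function of the $a_g\cdot |\operatorname{U}(\mathcal{G})| = O(n)$ independent $\Ber(p)$ indicators $\one[u \in S_v]$ ranging over colour-$g$ vertices $v$ of $\mathcal{G}$ and uncoloured vertices $u\in \operatorname{U}(\mathcal{G})$, and noting that the full expectation over $\mathcal{S}$ equals $\mu_{\mathcal{G}}(j_1,\dots,j_g)$ by definition. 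The key step is to show that flipping any single one of these indicators changes $\mu_{\mathcal{G}, \mathcal{S}}(j_1,\dots,j_g)$ by at most $O(n^{h-g-1})$.

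To establish this Lipschitz bound I would argue as follows. Flipping $\one[u\in S_v]$ toggles precisely the single edge between $v$ and $u$ in $\mathcal{G_S}$, so the only labelled copies of $H$ whose expected contribution to $\psi_H(\mathcal{G_S}, G_0, j_1,\dots,j_g)$ is affected are those mapping a pair of adjacent vertices of $H$ onto $\{v,u\}$ and using at least one vertex of each of the $g$ colours. Choosing which edge of $H$ covers $\{v,u\}$ costs $O(1)$; having at least one image in each of the colours $1,\dots,g-1$ forces at least $g-1$ of the remaining $h-2$ vertices of $H$ onto the $O(1)$ coloured vertices of $\mathcal{G}$; and the at most $h-g-1$ remaining vertices of $H$ map into $\operatorname{U}(\mathcal{G})$ in at most $O(n^{h-g-1})$ ways. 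Since each such copy contributes at most $1$ to the expectation over $G_0$, the claimed Lipschitz constant $O(n^{h-g-1})$ follows.

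Feeding this Lipschitz constant and the $O(n)$ variables into Azuma--Hoeffding yields
\[\Pr\left(|\mu_{\mathcal{G}, \mathcal{S}}(j_1,\dots,j_g) - \mu_{\mathcal{G}}(j_1,\dots,j_g)| \ge \tfrac{1}{2}n^{h-g-(1/2)}\log n\right)\le 2\exp\left(-\Omega((\log n)^{2})\right) = n^{-\omega(1)},\]
and a union bound over the $T = t_1\dotsm t_{g-1}$ choices of $(j_1,\dots,j_g)$ (a constant number) followed by \cref{lemma-graph-good} and the triangle inequality completes the proof. I expect the main (minor) obstacle to be the combinatorial Lipschitz estimate: the saving of a factor $n^{g-1}$ coming from the requirement in \cref{defn:psi} that each counted copy contains a representative of each of the $g$ colours is precisely what produces the exponent $h-g-(1/2)$ rather than a worse one.
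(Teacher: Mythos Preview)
Your proposal is correct and uses essentially the same ingredients as the paper: Azuma--Hoeffding with the Lipschitz bound $O(n^{h-g-1})$ for changing one uncoloured vertex's membership in the sets of $\mathcal{S}$, combined with the $O(n^{h-g-2})$ edge-Lipschitz bound for $G_0$. The only structural difference is that you split the deviation via the triangle inequality and invoke \cref{lemma-graph-good} for the $(\psi_H-\mu_{\mathcal G,\mathcal S})$ part, then run a separate bounded-differences argument over the $a_g\cdot|\operatorname U(\mathcal G)|$ Bernoulli indicators for the $(\mu_{\mathcal G,\mathcal S}-\mu_{\mathcal G})$ part; the paper instead runs a single Doob martingale that first exposes the edges of $G_0$ and then exposes, one uncoloured vertex $u$ at a time, the full vector $(\one[u\in S_v])_v$. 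Both routes give the same $n^2\cdot O(n^{2(h-g-2)})+n\cdot O(n^{2(h-g-1)})$ sum of squared increments and hence the same $\exp(-\Omega((\log n)^2))$ tail, so this is a cosmetic rather than a substantive difference.
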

\begin{proof}
By a union bound, it suffices to prove that for each $g$-tuple $(j_1,\dots,j_g)\in [t_1]\times \dots\times [t_{g-1}]\times [1]$ the probability to have
\begin{equation}\label{eqn:lemma-graph-good-2-to-show}
\left|\psi_H(\mathcal{G_S}, G_0, j_1,\dots,j_g)-\mu_{\mathcal{G}}(j_1,\dots,j_g)\right|>n^{h-g-(1/2)}\cdot \log n
\end{equation}
is $n^{-\omega(1)}$. So fix some $g$-tuple $(j_1,\dots,j_g)\in [t_1]\times \dots\times [t_{g-1}]\times [1]$. Note that the expectation of $\psi_H(\mathcal{G_S}, G_0, j_1,\dots,j_g)$ is precisely $\mu_{\mathcal{G}}(j_1,\dots,j_g)$.

Note that the random collection $\mathcal{S}=(S_v)_v$ as in \cref{defn:extension-restricted-colour-system} consists of $a_g$ different sets $S_v\su \operatorname{U}(\mathcal{G})$, one for each vertex of colour $g$. For any vertex $u\in \operatorname{U}(\mathcal{G})$, changing which of the sets $S_v$ the vertex $u$ belongs to, changes the edges in the graph $\mathcal{G_S}(G_0, j_1,\dots,j_g)$ between $u$ and the vertices of colour $g$. Hence it changes the value of $\psi_H(\mathcal{G_S}, G_0, j_1,\dots,j_g)$ by at most $h^{g+1} \cdot a_1\dotsm a_{g}\cdot n^{h-g-1}=O(n^{h-g-1})$ (since there are at most that many labelled copies of $H$ in the $n$-vertex graph $\mathcal{G_S}(G_0, j_1,\dots,j_g)$ which contain $u$ as well as at least one vertex of each of the $g$ colours).

Consider the Doob martingale with respect to $\psi_H(\mathcal{G_S}, G_0, j_1,\dots,j_g)$ obtained by first exposing the graph $G_0\sim\GG(\operatorname{U}(\mathcal{G}), p)$ one edge at a time, and then exposing the random collection $\mathcal{S}$ in the following way. In each step, for one vertex $u\in \operatorname{U}(\mathcal{G})$, we expose which of the sets $S_v$ of the collection $\mathcal{S}$ contain the vertex $u$. As in the proof of \cref{lemma-graph-good}, changing the status of an edge of $G_0$ changes $\psi_H(\mathcal{G_S}, G_0, j_1,\dots,j_g)$ by at most $h^{g+2} \cdot a_1\dotsm a_{g}\cdot n^{h-g-2}=O(n^{h-g-2})$. So, by the Azuma--Hoeffding inequality the probability that \cref{eqn:lemma-graph-good-2-to-show} occurs is at most
\[\exp\left(-\Omega\left(\frac{n^{2(h-g-1/2)}\cdot (\log n)^{2}}{n^2\cdot n^{2(h-g-2)}+n\cdot n^{2(h-g-1)}}\right)\right)\leq \exp\left(-\Omega\left((\log n)^{2}\right)\right)= n^{-\omega(1)}.\]
This finishes the proof of \cref{lemma-graph-good-2}.
\end{proof}

\begin{lem}
\label{lem:rough-scale} The following holds for any weakly $p$-general complete colour system $\mathcal{G}$ of order $n$ with parameters $(g-1, a_1,\dots,a_{g-1}, t_1,\dots,t_{g-1})$ and any $\lambda: [t_1]\times \dots\times [t_{g-1}]\to \ZZ$. If we choose a random graph $G_0\sim\GG(\operatorname{U}(\mathcal{G}), p)$ on the vertex set $\operatorname{U}(\mathcal{G})$ then
\[\Pr\left(\left\|\psi_H(\mathcal{G}, G_0, \cdot)-\lambda\right\|_\infty\leq n^{h-g-(1/2)}\cdot \log n\right)\le n^{-T/2+o(1)}.\]
\end{lem}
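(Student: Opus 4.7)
The plan is to view $\psi_H(\mathcal{G}, G_0, \cdot)$ as a vector-valued function of the edge-indicator vector of $G_0$ and to apply the multivariate anticoncentration inequality \cref{thm:rough-halasz}. Write $N=\binom{|\operatorname{U}(\mathcal{G})|}{2}$, so $N=\Theta(n^{2})$, and identify $\{0,1\}^{N}$ with the potential edges of $G_0$. Define $\boldsymbol{f}\colon\{0,1\}^{N}\to\RR^{T}$ by $\boldsymbol{f}(\boldsymbol{\xi})=\psi_H(\mathcal{G},G_0(\boldsymbol{\xi}),\cdot)$. I would take $d=T$, $r$ of order $n^{h-g-3/2}\sqrt{\log n}$, and $s$ of order $n^{h-g-1}$, so that $r\sqrt{N\log N}$ matches the threshold $n^{h-g-1/2}\log n$ appearing in the lemma while $(r\sqrt{\log N}/s)^{d}=n^{-T/2+o(1)}$, which is exactly the target bound.

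The next step is to classify the pairs $\{u,v\}\in\binom{\operatorname{U}(\mathcal{G})}{2}$ by the \emph{profile} of their endpoints: for every coloured vertex $w$ of $\mathcal{G}$ and every shade of its colour, record whether $u$ (resp.\ $v$) is a $\operatorname{U}(\mathcal{G})$-neighbour of $w$ in that shade. Weak $p$-generality together with \cref{lem:sets-general-position-intersection} guarantees that each possible profile is attained by $\Theta(n)$ uncoloured vertices, and hence each unordered profile pair is realised by $\Theta(n^{2})=\Theta(N)$ edges. These edge classes furnish the disjoint index sets of size at least $\varepsilon N$ required by \cref{thm:rough-halasz}.

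For an edge $e=\{u,v\}$, the derivative $\Delta_e\psi_H(\mathcal{G},G_0,j_1,\dots,j_{g-1})$ counts labelled copies of $H$ in $\mathcal{G}(G_0+e,j_1,\dots,j_{g-1})$ that use $e$ as a designated edge and contain at least one vertex of each of the $g-1$ colours. Its expectation over the remaining edges of $G_0$ depends only on the profile pair of $(u,v)$, has magnitude $\Theta(n^{h-g-1})$, and can therefore be written as $s\,\boldsymbol{v}_{I_1,I_2}$ for a fixed direction vector $\boldsymbol{v}_{I_1,I_2}\in\RR^{T}$. Concentration of $\Delta_e\boldsymbol{f}$ around this expectation with probability $1-n^{-\omega(1)}$ follows from the Azuma--Hoeffding inequality applied to the edge-exposure martingale on $G_0\setminus e$, exactly as in the proof of \cref{lemma-graph-good}; this is far stronger than the $N^{-6d^{2}}$ bound demanded by \cref{thm:rough-halasz}.

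The main obstacle is the non-degeneracy condition: I must show that the family $\{\boldsymbol{v}_{I_1,I_2}\}$, as $(I_1,I_2)$ ranges over profile pairs, spans $\RR^{T}$. This is where the completeness hypothesis on $\mathcal{G}$ becomes essential, and it is the step that I would defer to the ``core'' formalism announced for \cref{sec:cores}. The heuristic is that, for each target coordinate $(j_1^{*},\dots,j_{g-1}^{*})\in[t_1]\times\dots\times[t_{g-1}]$, completeness lets one exhibit profile pairs whose direction vectors have controlled, nonzero entries in this coordinate, while the entries in other coordinates can be disentangled by inclusion--exclusion over the binary profile data. Once the spanning property is established, I can select a basis $\boldsymbol{v}_1,\dots,\boldsymbol{v}_T$ among the $\boldsymbol{v}_{I_1,I_2}$ and feed the resulting data into \cref{thm:rough-halasz}, which delivers the claimed bound $n^{-T/2+o(1)}$.
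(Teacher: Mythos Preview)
Your proposal is essentially the paper's approach: encode $G_0$ as a Bernoulli vector, identify $\Delta_{\{u,v\}}\boldsymbol f$ with $\kappa_H(\mathcal G,G_0,\cdot,u,v)$, group edges by the adjacency profiles of their endpoints, and feed the resulting direction vectors into \cref{thm:rough-halasz}, deferring the spanning property to the core machinery of \cref{sec:cores}. The paper streamlines your profile-pair classification by only using pairs with $u\in U_*$ (the ``full'' profile) and $v\in U_e$; this makes the direction vectors literally equal to the functions $\Gamma_{\mathcal C,e}(\cdot,1)$ for the \emph{extended core} $\mathcal C$, so that \cref{lem:the-Gamma-e-span} applies directly rather than via the inclusion that your larger collection contains these vectors.

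One technical slip: invoking the edge-exposure martingale ``exactly as in \cref{lemma-graph-good}'' gives a deviation bound of order $n^{h-g-1}$ for $\kappa_H$, which is too weak---you need $O(n^{h-g-3/2}\log n)$. The fix is either to use vertex exposure (as the paper does in \cref{lem:kappa-close-nu}), or to use Azuma--Hoeffding with non-uniform Lipschitz constants, noting that only the $O(n)$ edges meeting $\{u,v\}$ have Lipschitz constant $O(n^{h-g-2})$ while the remaining $O(n^2)$ edges have Lipschitz constant $O(n^{h-g-3})$.
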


We defer the proof of \cref{lem:rough-scale} to \cref{sec:halasz-prep}.

\subsection{Proof of \texorpdfstring{\cref{thm:general-subgraph-counts}}{Theorem~\ref{thm:general-subgraph-counts}} for $g-1$}
\label{subsec:completion-proof}

In this subsection, we deduce \cref{thm:general-subgraph-counts} for $g-1$ from \cref{cor:dispersed} for $g$. Consider any $p$-general complete colour system $\mathcal{G}$ of order $n$ with parameters $(g-1, a_1,\dots,a_{g-1}, t_1,\dots,t_{g-1})$ and any function $\lambda: [t_1]\times \dots\times [t_{g-1}]\to \ZZ$. We may assume that $n$ is sufficiently large with respect to the parameters $g-1, a_1,\dots,a_{g-1}, t_1,\dots,t_{g-1}$ that were fixed throughout this section.

Since $\mathcal{G}$ is a $p$-general colour system, the $\operatorname{U}(\mathcal{G})$-neighbourhoods of each of the coloured vertices of $\mathcal{G}$ in each of the shades of the respective colour are in $p$-general position. This implies that there exist vertices in $\operatorname{U}(\mathcal G)$ representing all possible ways to be adjacent to the coloured vertices. To be specific, for each $1\leq j\leq g-1$ and each vertex $v$ in $\mathcal{G}$ of colour $j$, consider any subset $I_v\su [t_j]$. There is a vertex $w\in \operatorname{U}(\mathcal{G})$ such that for every $1\leq j\leq g-1$ and every vertex $v$ of colour $j$, between $w$ and $v$ there are edges of exactly those shades of colour $j$ that belong to the set $I_{v}$. For each choice of the subsets $I_v$, fix a particular such vertex $w$, and let $W$ be the set of all these fixed vertices $w$. Then, $W$ has size
\[(2^{t_1})^{a_1}\dotsm (2^{t_{g-1}})^{a_{g-1}}=2^{a_1t_1+\dots+a_{g-1}t_{g-1}}=a_g.\]
To prove \cref{thm:general-subgraph-counts}, we need to show that for a random graph $G_0\sim\GG(\operatorname{U}(\mathcal{G}), p)$, we have
\[
\Pr\left(\psi_H(\mathcal{G}, G_0,\cdot)=\lambda\right)\le n^{((g-1)-h+1)\cdot T+o(1)}=n^{(g-h)\cdot T+o(1)}.
\]
Fix any possible outcome of the edges of the graph $G_0[W]$ between the vertices in $W$. We will prove that the event $\psi_H(\mathcal{G}, G_0,\cdot)=\lambda$ occurs with probability at most $n^{(g-h)\cdot T+o(1)}$ conditioned on this outcome.

We can obtain a colour system $\mathcal{G}^-$ of order $n-a_g$ with parameters $(g-1, a_1,\dots,a_{g-1},t_1,\dots,t_{g-1})$ from the colour system $\mathcal{G}$ by deleting the $a_g$ vertices in $W\su \operatorname{U}(\mathcal{G})$. By \cref{lemma-general-deletion-weakly}, since $\mathcal{G}$ is $p$-general, $\mathcal{G}^-$ is weakly $p$-general. Also, $\mathcal{G}^-$ is complete, because  $\mathcal{G}$ is complete and being complete does not depend on any of the uncoloured vertices.

Furthermore, from $\mathcal{G}$ and our fixed outcome of $G_0[W]$, we can obtain a restricted colour system $\mathcal{G}'$ of order $n$ with parameters $(g, a_1,\dots,a_{g-1},a_g, t_1,\dots,t_{g-1})$ in the following way. First, colour all the $a_g$ vertices in $W\su \operatorname{U}(\mathcal{G})$ with colour $g$. Then, take a single shade of colour $g$ and colour all edges in $G_0[W]$ with this single shade of colour $g$. The restricted colour system $\mathcal{G}'$ so obtained is complete and essentially $p$-general, by our choice of $W$ and the assumptions that $\mathcal{G}$ is complete and $p$-general.

Given our fixed outcome of $G_0[W]$, we can choose the rest of the random graph $G_0\sim\GG(\operatorname{U}(\mathcal{G}), p)$ in two steps as follows. First, we choose a random graph $G_0^{-}\sim\GG(\operatorname{U}(\mathcal{G})\sm W, p)$ on the vertex set $\operatorname{U}(\mathcal{G})\sm W$. Then, choose random subsets $S_v \su \operatorname{U}(\mathcal{G})\sm W$ for each vertex $v\in W$ by taking each element of $\operatorname{U}(\mathcal{G})\sm W$ independently with probability $p$ (and choose all these sets $S_v$ independently of each other and independent of $G_0^-$). Now, take $G_0$ to be the graph on the vertex set $\operatorname{U}(\mathcal{G})$ obtained by starting with the union of the edges of the graphs $G_0^-$ and $G_0[W]$, and adding edges between each vertex $v\in W$ and all the vertices in $S_v\su \operatorname{U}(\mathcal{G})\sm W$.

Let $\mathcal{S}=(S_v)_{v\in W}$ be the collection of random sets chosen above. Note that the random choice of $\mathcal{S}$ is precisely what is described in \cref{defn:extension-restricted-colour-system} for the restricted colour system $\mathcal{G'}$ (recall that $W$ is the set of vertices of colour $g$ in the restricted colour system $\mathcal{G'}$).

Then, for any outcome of $\mathcal{S}$ and $G_0^{-}\sim\GG(\operatorname{U}(\mathcal{G})\sm W, p)$, and any $(j_1,\dots,j_{g-1})\in [t_1]\times\dots\times [t_{g-1}]$, the graph $\mathcal{G}(G_0, j_1,\dots,j_{g-1})$ is the same as the graph $\mathcal{G'_S}(G_0^-, j_1,\dots,j_{g-1},1)$. (Here $\mathcal{G'_S}$ is the colour system obtained from the restricted colour system $\mathcal{G'}$ as in \cref{defn:extension-restricted-colour-system}.) Recall that by definition $\psi_H(\mathcal{G}, G_0, j_1,\dots,j_{g-1})$ is the number of labelled copies of $H$ in the graph $\mathcal{G}(G_0, j_1,\dots,j_{g-1})$ which use at least one vertex of each of the colours $1,\dots,g-1$. If such a copy of $H$ contains at least one vertex in $W$, then it is one of the $\psi_H(\mathcal{G'_S}, G_0^-, j_1,\dots,j_{g-1}, 1)$ labelled copies of $H$ in the graph $\mathcal{G'_S}(G_0^-, j_1,\dots,j_{g-1},1)=\mathcal{G}(G_0, j_1,\dots,j_{g-1})$ which use at least one vertex of each of the $g$ colours of $\mathcal{G}'$. On the other hand, if it does not contain a vertex of $W$, then it is one of the $\psi_H(\mathcal{G}^-, G_0^-, j_1,\dots,j_{g-1})$ labelled copies of $H$ in the graph $\mathcal{G^-}(G_0^-, j_1,\dots,j_{g-1})=\mathcal{G}(G_0, j_1,\dots,j_{g-1})\sm W$ which use at least one vertex of each of the colours $1,\dots,g-1$. Thus, for any outcome of $\mathcal{S}$ and $G_0^{-}$, we have
\[\psi_H(\mathcal{G}, G_0, \cdot)=\psi_H(\mathcal{G'_S}, G_0^-, \cdot, 1)+\psi_H(\mathcal{G}^-, G_0^-, \cdot).\]
For random $\mathcal{S}=(S_v)_{v\in W}$ and $G_0^{-}\sim\GG(\operatorname{U}(\mathcal{G})\sm W, p)$ as above, it therefore suffices to prove that
\begin{equation}\label{eqn:proof-thms-2}
\Pr\left(\psi_H(\mathcal{G'_S}, G_0^-, \cdot, 1)+\psi_H(\mathcal{G}^-, G_0^-, \cdot)
=\lambda\right)\le n^{(g-h)\cdot T+o(1)}.
\end{equation}
Let $\delta:\NN\to \RR_{\geq 0}$ be the function obtained by applying \cref{cor:dispersed} with parameters $g, a_1,\dots,a_g$, $t_1,\dots,t_{g-1}$ (recall that we are assuming that \cref{cor:dispersed} holds for $g$). So $\delta(n)=o(1)$. Call an outcome of $(\mathcal{S},G_0^{-})$ \emph{typical} if the following two conditions hold.
\begin{itemize}
\item[(a)] For $q=n^{(g-h+(1/2))\cdot T+\delta(n)}$, the graph $G_0^-$ is $(p,q,\mathcal{G'})$-dispersed;
\item[(b)] we have $\left\|\psi_H(\mathcal{G'_S}, G_0^-,\cdot)-\mu_{\mathcal{G'}}\right\|_\infty\leq n^{h-g-(1/2)}\cdot \log n$.
\end{itemize}

\begin{claim}\label{claim:typical-outcome}
$(\mathcal{S},G_0^{-})$ is typical with probability $1-n^{-\omega(1)}$.
\end{claim}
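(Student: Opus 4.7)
The plan is to verify conditions (a) and (b) separately, noting that the colour system $\mathcal{G}'$ has been constructed precisely so that the two main tools of this section (Proposition~\ref{cor:dispersed} for $g$ and Lemma~\ref{lemma-graph-good-2}) both apply to it, and then to conclude by a union bound. Recall the text preceding the claim already verified that $\mathcal{G}'$ is an essentially $p$-general complete restricted colour system of order $n$ with parameters $(g, a_1,\dots,a_{g-1}, a_g, t_1,\dots,t_{g-1})$, and that $G_0^-$ is distributed as $\GG(\operatorname{U}(\mathcal{G}'), p)$ independently of $\mathcal{S}$, while $\mathcal{S}$ is distributed exactly as in \cref{defn:extension-restricted-colour-system} for $\mathcal{G}'$.

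For condition (a), I would invoke \cref{cor:dispersed} (whose case ``$g$'' we are assuming throughout this section) applied to the restricted colour system $\mathcal{G}'$. Since the function $\delta$ we fixed at the start of the proof of \cref{thm:general-subgraph-counts} for $g-1$ was chosen to be exactly the function supplied by \cref{cor:dispersed} for parameters $(g, a_1,\dots,a_g, t_1,\dots,t_{g-1})$, the corollary yields that $G_0^-$ is $(p, q, \mathcal{G}')$-dispersed with probability $1 - n^{-\omega(1)}$ (where $q = n^{(g-h+1/2) T + \delta(n)}$, matching the choice in condition (a)). Notably, dispersedness is a property of $G_0^-$ alone, so this failure probability does not depend on the subsequent random draw of $\mathcal{S}$.

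For condition (b), I would apply \cref{lemma-graph-good-2} to $\mathcal{G}'$ with the joint random choice $(G_0^-, \mathcal{S})$. Since this joint distribution matches the hypothesis of \cref{lemma-graph-good-2} (independent random graph on $\operatorname{U}(\mathcal{G}')$ together with the random collection of sets from \cref{defn:extension-restricted-colour-system}), the lemma directly gives $\|\psi_H(\mathcal{G}'_{\mathcal{S}}, G_0^-, \cdot) - \mu_{\mathcal{G}'}\|_\infty \le n^{h-g-(1/2)} \log n$ with probability $1 - n^{-\omega(1)}$.

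A union bound over the two failure events then shows both (a) and (b) hold with probability $1 - n^{-\omega(1)}$, which is precisely the claim. I do not anticipate any real obstacle here: the substantive work has already been done in the setup, where $\mathcal{G}'$ was shown to inherit completeness and essential $p$-generality from $\mathcal{G}$ and from the careful choice of the vertex set $W$; once those properties are in place, the claim is a two-line application of the tools developed earlier in the section.
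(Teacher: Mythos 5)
Your proposal is correct and matches the paper's own proof: condition (a) is handled by applying \cref{cor:dispersed} (assumed for $g$) to the essentially $p$-general complete restricted colour system $\mathcal{G}'$, condition (b) by applying \cref{lemma-graph-good-2} to $\mathcal{G}'$, and the two are combined by a union bound. Nothing is missing.
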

\begin{proof}
Recall that $\mathcal{G}'$ is an essentially $p$-general complete restricted colour system of order $n$ with parameters $(g, a_1,\dots,a_{g-1},a_g, t_1,\dots,t_{g-1})$. Its set of uncoloured vertices is $\operatorname{U}(\mathcal{G})\sm W$. Thus, by \cref{cor:dispersed} (which we assumed to hold for $g$), the random graph $G_0^{-}\sim\GG(\operatorname{U}(\mathcal{G})\sm W, p)$ is $(p,q,\mathcal{G'})$-dispersed with probability $1-n^{-\omega(1)}$. This shows that (a) holds with probability $1-n^{-\omega(1)}$.

On the other hand, (b) holds with probability $1-n^{-\omega(1)}$ by \cref{lemma-graph-good-2} applied to the restricted colour system $\mathcal{G'}$.
\end{proof}

Note that whenever an outcome of $(\mathcal{S},G_0^{-})$ is typical (specifically, whenever (b) holds), we cannot have $\psi_H(\mathcal{G'_S}, G_0^-, \cdot, 1)+\psi_H(\mathcal{G}^-, G_0^-, \cdot)
=\lambda$ unless
\begin{equation}\label{eqn:proof-thms-3}
\left\|\psi_H(\mathcal{G}^-, G_0^-, \cdot)+\mu_{\mathcal{G'}}(\cdot, 1)-\lambda\right\|_\infty\\
\leq n^{h-g-(1/2)}\cdot \log n\leq  2\cdot (n-a_g)^{h-g-(1/2)}\cdot \log (n-a_g).
\end{equation}
(here we used that $n$ is sufficiently large with respect to $a_g=2^{a_1t_1+\dots+a_{g-1}t_{g-1}}$).

Consider the function $\lambda': [t_1]\times \dots\times [t_{g-1}]\to \ZZ$ defined by $\lambda'( j_1,\dots, j_{g-1})=\lambda( j_1,\dots, j_{g-1})-\mu_{\mathcal{G'}}(j_1,\dots,j_{g-1}, 1)$. By \cref{lem:rough-scale} applied with the function $\lambda'$ and the weakly $p$-general complete colour system $\mathcal{G}^-$ of order $n-a_g$ with parameters $(g-1, a_1,\dots,a_{g-1}, t_1,\dots,t_{g-1})$, we see that \cref{eqn:proof-thms-3} holds with probability at most $(n-a_g)^{-T/2+o(1)}=n^{-T/2+o(1)}$.

Note that both \cref{eqn:proof-thms-3} and (a) only depend on the random choice of $G_0^-$ and not on the random choice of $\mathcal{S}$. For any outcome of $G_0^-$ such that \cref{eqn:proof-thms-3} and (a) hold, the conditional probability of the event that 
$\psi_H(\mathcal{G'_S}, G_0^-, \cdot, 1)=\lambda-\psi_H(\mathcal{G}^-, G_0^-, \cdot)$  is at most $q=n^{(g-h+(1/2))\cdot T+\delta(n)}$. (This follows directly from the definition of being $(p,q,\mathcal{G'})$-dispersed; see \cref{defn:dispersedness}).

Thus, the total probability that $(G_0^-,\mathcal{S})$ is typical and satisfies $\psi_H(\mathcal{G'_S}, G_0^-, \cdot, 1)+\psi_H(\mathcal{G}^-, G_0^-, \cdot)
=\lambda$ is at most
\[n^{-T/2+o(1)}\cdot n^{(g-h+(1/2))\cdot T+\delta(n)}=n^{(g-h)\cdot T+o(1)},\]
recalling that $\delta(n)=o(1)$. By \cref{claim:typical-outcome}, the probability that $(G_0^-,\mathcal{S})$ is not typical is $n^{-\omega(1)}$, so the probability in \cref{eqn:proof-thms-2} is at most $n^{(g-h)\cdot T+o(1)}+n^{-\omega(1)}=n^{(g-h)\cdot T+o(1)}$. This finishes the proof of \cref{thm:general-subgraph-counts} for $g-1$.

\section{Cores and non-degeneracy}
\label{sec:cores}

It remains to prove \cref{lem:medium-scale,lem:rough-scale}. Both of these lemmas will be proved using our new multivariate anticoncentration inequality (\cref{thm:rough-halasz}), which requires us to check a non-degeneracy condition for a certain collection of vectors. In this section we introduce the formalism of \emph{cores}, which are special types of colour systems of bounded size. For a core $\mathcal C$, we will define certain functions $\Gamma_{\mathcal{C},e}$ (which may be interpreted as belonging to a vector space of functions), and we show that under certain conditions these functions span the entire space.

The point of this section is that in the settings of both \cref{lem:medium-scale} and \cref{lem:rough-scale}, the collections of vectors that we need to study to apply \cref{thm:rough-halasz} are in correspondence with collections of functions $\Gamma_{\mathcal{C},e}$, for appropriately chosen cores $\mathcal C$. In the next section (\cref{sec:halasz-prep}) we will specify how to actually choose the cores for these correspondences. Throughout this section, fix any $0\le g\le h-1$ (recall that $h$ is the number of vertices of our fixed graph $H$).

\begin{defn}\label{def:core}
For integers $0\le g\le h-1$ and $a_1,\dots,a_g, t_1,\dots,t_{g-1}\geq 1$, a \emph{core} $\mathcal{C}$ with parameters $(g, a_1,\dots,a_g, t_1,\dots,t_{g-1})$ is a colour system with parameters $(g, a_1,\dots,a_g, t_1,\dots,t_{g-1}, 1)$ which has exactly one uncoloured vertex, such that the uncoloured vertex has edges to all coloured vertices in all possible shades (meaning that for each $1\leq i\leq g$, the uncoloured vertex has edges in all $t_i$ shades of colour $i$ to all vertices of colour $i$).
\end{defn}

Note that for fixed parameters $(g, a_1,\dots,a_g, t_1,\dots,t_{g-1})$ there are only finitely many different cores $\mathcal{C}$ with these parameters.

A core $\mathcal{C}$ with  parameters $(g, a_1,\dots,a_g, t_1,\dots,t_{g-1})$ is called \emph{complete} if it is complete when viewed as a colour system with parameters $(g, a_1,\dots,a_g, t_1,\dots,t_{g-1}, 1)$.

\begin{defn}
\label{def:partial-copy} A \emph{partial copy} of $H$ in a core $\mathcal{C}$ with parameters $(g, a_1,\dots,a_g, t_1,\dots,t_{g-1})$ is a labelled copy of a $(g+1)$-vertex induced subgraph of $H$ which uses one vertex of each colour in $\mathcal{C}$ and the uncoloured vertex. More formally, a partial copy of $H$ in a core $\mathcal{C}$ is given by a graph homomorphism $\phi: H[V']\to \mathcal{C}$ for some subset $V'\su V(H)$ of size $g+1$ such that the image of $\phi$ contains one vertex of each colour and the uncoloured vertex.
\end{defn}

Note that the homomorphism $\phi: H[V']\to \mathcal{C}$ in \cref{def:partial-copy} is automatically injective on vertices (and therefore also on edges).

\begin{defn}
Consider a core $\mathcal{C}$ with parameters $(g, a_1,\dots,a_g, t_1,\dots,t_{g-1})$. For a subset $V'\su V(H)$ of size $g+1$, define the \emph{weight} of a partial copy $\phi: H[V']\to \mathcal{C}$ of $H$ to be $p^{e(H)-e_H(V')}$. Also, for a $g$-tuple $(j_1,\dots,j_g)\in [t_1]\times\dots\times[t_{g-1}]\times [1]$, say that a partial copy $\phi: H[V']\to \mathcal{C}$ of $H$ in $\mathcal{C}$ is \emph{$(j_1,\dots,j_g)$-coloured} if for each $1\leq i\leq g$ all the edges of colour $i$ in the image of $\phi$ have shade $j_i$.
\end{defn}

Note that if a partial copy $\phi: H[V']\to \mathcal{C}$ of $H$ in $\mathcal{C}$ does not contain any edges of colour $i$ in its image, then it can be $(j_1,\dots,j_g)$-coloured for different values of $j_i$. However, if $\phi$ has at least one edge of colour $i$ in its image, then it can only be $(j_1,\dots,j_g)$-coloured if $j_i$ is the shade of the edges of colour $i$ in the image of $\phi$ (if there are different edges of colour $i$ with different shades in the image of $\phi$, then $\phi$ is not $(j_1,\dots,j_g)$-coloured for any $(j_1,\dots,j_g)$).

\begin{defn}\label{defn:gamma}
For a core $\mathcal{C}$ with parameters $(g, a_1,\dots,a_g, t_1,\dots,t_{g-1})$ and a collection of edges $F\su E(\mathcal{C})$, let $\Gamma_{\mathcal{C},F}: [t_1]\times\dots\times[t_{g-1}]\times [1]\to \mathbb{R}$ be the function defined as follows. For all $(j_1,\dots,j_g)\in [t_1]\times\dots\times[t_{g-1}]\times [1]$, let $\Gamma_{\mathcal{C},F}(j_1,\dots,j_g)$ be the sum of the weights of all $(j_1,\dots,j_g)$-coloured partial copies of $H$ in $\mathcal{C}$ whose image contains all edges in $F$. If $F$ just consists of one edge $e$, we write $\Gamma_{\mathcal{C},e}$ instead of $\Gamma_{\mathcal{C},\lbrace e\rbrace}$.
\end{defn}

Note that we have $\Gamma_{\mathcal{C},F}(j_1,\dots,j_g)=0$ if for some $1\leq i\leq g$ the set $F$ contains an edge of colour $i$ with a shade distinct from $j_i$, because then there are no $(j_1,\dots,j_g)$-coloured partial copies of $H$ in $\mathcal{C}$ whose image contains this edge.

Now, the main result of this section is as follows, showing that the functions $\Gamma_{\mathcal{C},e}$ satisfy a non-degeneracy condition.

\begin{lem}\label{lem:the-Gamma-e-span}
Let $\mathcal{C}$ be a complete core with parameters $(g, a_1,\dots,a_g, t_1,\dots,t_{g-1})$. Consider the functions $\Gamma_{\mathcal{C},e}$, where $e$ ranges over all edges between the uncoloured vertex of $\mathcal{C}$ and a vertex of colour $g$. Then these functions span the real vector space of all functions $[t_1]\times\dots\times[t_{g-1}]\times [1]\to \mathbb{R}$.
\end{lem}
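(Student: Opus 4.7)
The plan is to unfold the definition of $\Gamma_{\mathcal{C},e}$ and then exploit the completeness of $\mathcal{C}$ via a Möbius inversion argument to produce enough indicator functions to span $\mathbb{R}^T$, where $T = t_1 \dotsm t_{g-1}$.

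First, for a colour-$g$ vertex $w$, I would expand $\Gamma_{\mathcal{C},\{u,w\}}(\vec j, 1)$ as a sum indexed by ordered $(g+1)$-tuples $V' = (v_H^0, \dots, v_H^g)$ of distinct vertices of $H$ and tuples $\vec\omega = (\omega_1, \dots, \omega_{g-1})$ with $\omega_i$ a colour-$i$ vertex of $\mathcal{C}$: the $(V', \vec\omega)$-term corresponds to the partial copy with $v_H^0 \mapsto u$, $v_H^i \mapsto \omega_i$, $v_H^g \mapsto w$, and contributes $p^{e(H) - e_H(V')}$ times an indicator factor. Writing $I^{(a)}_{x,y}$ for the set of shades of colour-$a$ edges between $x$ and $y$ in $\mathcal{C}$, this factor splits as $C_{\vec\omega}(\vec j) \cdot D_{\vec\omega, w}(\vec j)$, where $C_{\vec\omega}$ is a $w$-independent product of indicators $\one[j_a \in I^{(a)}_{\omega_a, \omega_b}]$ coming from edges $v_H^a v_H^b \in E(H)$ with $1 \le a < b \le g-1$, while $D_{\vec\omega, w}(\vec j) = \prod_{a \in A_0(V')} \one[j_a \in I^{(a)}_{\omega_a, w}]$ comes from edges $v_H^a v_H^g \in E(H)$, with $A_0(V') := \{a \in [g-1] : v_H^a v_H^g \in E(H)\}$.

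Next, I would exploit completeness. Fix anchors $\omega_a^0$ of each colour $a < g$, chosen inductively (via completeness applied successively to colours $1, 2, \dots, g-1$) so that every pair of anchors is connected in every shade of every colour; this ensures $C_{\vec\omega^0} \equiv 1$. For each subset $U \subseteq [g-1]$ and tuple $\vec s \in \prod_{a \in U} [t_a]$, use completeness again to produce a colour-$g$ vertex $w_{U, \vec s}$ whose only colour-$<g$ incidences are single shade-$s_a$ colour-$a$ edges to $\omega_a^0$, one for each $a \in U$. For such $w_{U, \vec s}$, a $(V', \vec\omega)$-term contributes only when $A_0(V') \subseteq U$ and $\omega_a = \omega_a^0$ for all $a \in A_0(V')$, in which case its $D$-factor becomes $\prod_{a \in A_0(V')} \one[j_a = s_a]$. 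Möbius inversion over $U$ then shows that the span contains $\sum_{U \subseteq [g-1]} (-1)^{g-1-|U|} \Gamma_{\mathcal{C}, \{u, w_{U, \vec s}\}}$, and this combination equals $c \cdot \prod_{a=1}^{g-1} \one[j_a = s_a]$, where $c = \sum_{V'} p^{e(H) - e_H(V')}$ with the sum running over ordered $(g+1)$-tuples $V' \subseteq V(H)$ in which $v_H^g$ is adjacent in $H$ to every other $v_H^i$.

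If $c \ne 0$, then varying $\vec s$ over $[t_1] \times \dots \times [t_{g-1}]$ yields all indicator functions $\one_{\vec s}$, which form a basis of $\mathbb{R}^T$, and we are done. I expect the main obstacle to be the case $c = 0$, which occurs whenever $H$ has maximum degree less than $g$ (for example $H$ a long path with $g \ge 3$, where no $V'$ with $A_0(V') = [g-1]$ exists). To handle this, one must work with partial copies satisfying $A_0(V') \subsetneq [g-1]$ and use the $C_{\vec\omega}$-factors, tuned by varying the anchors $\vec\omega^0$, to impose the shade constraints on the indices in $[g-1] \setminus A_0(V')$ that are not provided by $D$. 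Since $H$ is connected with $h \ge g+1$ vertices, a BFS construction from $v_H^g$ (after choosing $v_H^0 \in N_H(v_H^g)$) produces $(g+1)$-tuples $V'$ in which every $a \in [g-1]$ is constrained by some edge of $H[V']$, and a more elaborate inclusion-exclusion, jointly over $U$ and anchor patterns, should supply the missing ingredients to span $\mathbb{R}^T$.
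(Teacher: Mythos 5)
Your expansion of $\Gamma_{\mathcal C,e}$ over partial copies, the choice of anchors via completeness, and the M\"obius inversion over $U$ are all sound, and in the case where some vertex of $H$ has degree at least $g$ (so that $c>0$) this does yield the indicator functions and hence the lemma. But that is not the hard case: as the paper notes in \cref{sec:outline}, the non-degeneracy conditions are easy when $H$ has a high-degree vertex, and the entire colour-system/completeness apparatus exists precisely to handle connected $H$ \emph{without} one (paths, cycles, bounded-degree trees, \dots). For all such $H$ your argument lands in the final paragraph, where ``a more elaborate inclusion-exclusion, jointly over $U$ and anchor patterns, should supply the missing ingredients'' is a statement of intent rather than a proof. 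That step is exactly the content of the lemma, and it is not routine.

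Concretely, when no vertex of $H$ is adjacent to all of $v_H^0,\dots,v_H^{g-1}$, the shade constraints on $j_1,\dots,j_{g-1}$ must be imposed by a connected subgraph of $H$ rather than a star. The paper takes a $g$-edge subtree $F_H$ of $H$ and maps it to a ``downward tree'' $F$ in $\mathcal C$ with exactly one edge of each colour, so that $\Gamma_{\mathcal C,F}$ is a positive multiple of the desired indicator (this matches your BFS idea and is \cref{lem:Gamma-pos-value}). The real work is then showing $\Gamma_{\mathcal C,F}\in L$, which the paper does by induction on the size of $F$ (\cref{lem:downward-tree-in-L}): peel off the edge $e^*$ at the lowest-coloured leaf, and use completeness to build a parallel copy $F'$ of $F\setminus\{e^*\}$ on \emph{fresh} coloured vertices $v'_{g-b+2},\dots,v'_g$ that replicate every adjacency of $v_{g-b+2},\dots,v_g$ except that the single shade of $e^*$ is deleted; a bijection between partial copies then gives the telescoping identity $\Gamma_{\mathcal C,F}=\Gamma_{\mathcal C,F\setminus\{e^*\}}-\Gamma_{\mathcal C,F'}$. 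Note that all the higher-coloured vertices must be replaced simultaneously --- perturbing a single anchor changes every term in which that anchor appears, not only those where the relevant edge of $H$ is present, so a one-anchor-at-a-time inclusion-exclusion does not obviously close. Your proposal contains no analogue of this identity, so the lemma remains unproved for general connected $H$.
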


\subsection{Proof of \texorpdfstring{\cref{lem:the-Gamma-e-span}}{Lemma~\ref{lem:the-Gamma-e-span}}}

For this subsection, fix a core $\mathcal{C}$ with parameters $(g, a_1,\dots,a_g, t_1,\dots,t_{g-1})$. Let $L$ be the span of the functions $\Gamma_{\mathcal{C},e}$, for all edges $e$ between the uncoloured vertex of $\mathcal{C}$ and a vertex of colour $g$. Note that $L$ is a subspace of the real vector space of all functions $[t_1]\times\dots\times[t_{g-1}]\times [1]\to \mathbb{R}$. Our goal is to show that $L$ is actually the entire space of  functions $[t_1]\times\dots\times[t_{g-1}]\times [1]\to \mathbb{R}$. 

First, let us make some more definitions.

\begin{defn}
For an integer $1\leq b\leq g$, a \emph{downward tree} of size $b$ in $\mathcal{C}$ is a collection $F\su E(\mathcal{C})$ of $b$ edges of colours $g-b+1,\dots,g$ which form a tree containing the uncoloured vertex of $\mathcal{C}$ as a leaf.
\end{defn}

\begin{figure}
\begin{center}
\begin{tikzpicture}[scale=0.75]
\clip(-4.2,-0.2) rectangle (6.6,6.2);
\draw [line width=1.5pt] (0,6)-- (0,5);
\draw [line width=1.5pt] (0,5)-- (-2,4);
\draw [line width=1.5pt] (0,5)-- (0.5,3);
\draw [line width=1.5pt] (0.5,3)-- (1.5,2);
\draw [line width=1.5pt] (0.5,3)-- (-0.5,1);
\draw [fill=black] (0,0) circle (2pt);
\draw [fill=black] (-0.5,1) circle (2pt);
\draw [fill=black] (0.5,1) circle (2pt);
\draw [fill=black] (-0.5,2) circle (2pt);
\draw [fill=black] (0.5,2) circle (2pt);
\draw [fill=black] (1.5,2) circle (2pt);
\draw [fill=black] (-1.5,2) circle (2pt);
\draw [fill=black] (0.5,3) circle (2pt);
\draw [fill=black] (1.5,3) circle (2pt);
\draw [fill=black] (-0.5,3) circle (2pt);
\draw [fill=black] (-1.5,3) circle (2pt);
\draw [fill=black] (-2.5,3) circle (2pt);
\draw [fill=black] (2.5,3) circle (2pt);
\draw [fill=black] (0,4) circle (2pt);
\draw [fill=black] (1,4) circle (2pt);
\draw [fill=black] (2,4) circle (2pt);
\draw [fill=black] (3,4) circle (2pt);
\draw [fill=black] (-1,4) circle (2pt);
\draw [fill=black] (-2,4) circle (2pt);
\draw [fill=black] (-3,4) circle (2pt);
\draw [fill=black] (0,5) circle (2pt);
\draw [fill=black] (1,5) circle (2pt);
\draw [fill=black] (2,5) circle (2pt);
\draw [fill=black] (3,5) circle (2pt);
\draw [fill=black] (4,5) circle (2pt);
\draw [fill=black] (-1,5) circle (2pt);
\draw [fill=black] (-2,5) circle (2pt);
\draw [fill=black] (-3,5) circle (2pt);
\draw [fill=black] (-4,5) circle (2pt);
\draw [fill=black] (0,6) circle (2pt);
\node at (5.5,6)  {uncoloured};
\node at (5.5,5)  {colour 6};
\node at (5.5,4)  {colour 5};
\node at (5.5,3)  {colour 4};
\node at (5.5,2)  {colour 3};
\node at (5.5,1)  {colour 2};
\node at (5.5,0)  {colour 1};
\node at (0.25,5.5)  {6};
\node at (-1.3,4.65)  {5};
\node at (0.5,4.05)  {4};
\node at (1.18,2.68)  {3};
\node at (-0.1,2.4)  {2};
\end{tikzpicture}
\caption{An example of a downward tree, where $g=6$ and $b=5$. The numbers on the edges indicate the respective colours.}
\label{fig:pict-tree}
\end{center}
\end{figure}
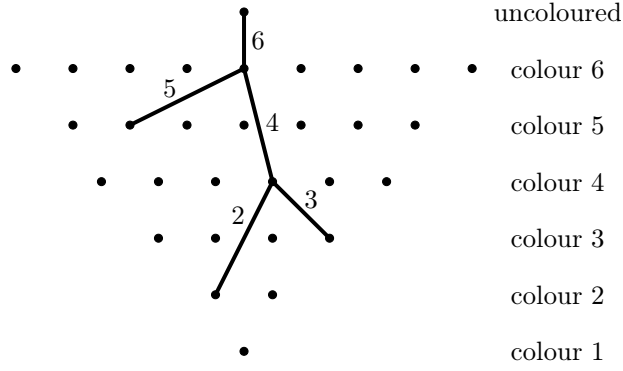

Although formally a downward tree $F$ is just a collection of edges, we say that $F$ contains a vertex of $\mathcal{C}$ if this vertex is part of the tree formed by the edges in $F$. For an example of a downward tree, see \cref{fig:pict-tree}.

\begin{lem}\label{lem:structure-downward-tree}
For any integer $1\leq b\leq g$, every downward tree $F$ of size $b$ in $\mathcal{C}$ contains the uncoloured vertex and exactly one vertex in each of the colours $g-b+1,\dots,g$ (and no vertices in the colours $1,\dots,g-b$). Furthermore, the vertex of colour $g-b+1$ is a leaf in the tree $F$. Finally, if $b\geq 2$ and $e$ is the unique edge of $F$ incident to the vertex of colour $g-b+1$, then $e$ is not incident to the uncoloured vertex and $F\setminus \lbrace e\rbrace$ is a downward tree of size $b-1$.
\end{lem}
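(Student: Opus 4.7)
The plan is to prove all three assertions simultaneously by induction on $b$. The base case $b=1$ is immediate: the tree $F$ consists of a single edge of colour $g$ incident to the uncoloured vertex (since the uncoloured vertex is a leaf of $F$), and the colouring rules of \cref{def:colour-system} force the other endpoint to have colour exactly $g$; the remaining conclusions are then trivial (the colour-$g$ vertex is a leaf, and the third assertion is vacuous when $b=1$).

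For the inductive step with $b \geq 2$, I would rely on the following monotonicity consequence of the colouring rules: any edge incident to a coloured vertex $v$ has colour at most the colour of $v$. This immediately rules out the presence in $F$ of any vertex of colour $\leq g-b$, since such a vertex would have to be incident to some edge of $F$ of colour $\leq g-b$, contradicting the assumption on edge colours of $F$. Next, let $e$ be the unique edge of $F$ of colour $g-b+1$. The colouring rules force $e$ to have an endpoint $w$ of colour exactly $g-b+1$, and by monotonicity every edge of $F$ incident to $w$ has colour at most $g-b+1$, hence equals $e$; thus $w$ is a leaf of $F$.

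The crux is the case analysis for the other endpoint $u$ of $e$. I would rule out both $u$ being uncoloured and $u$ having colour $g-b+1$: in either case $u$ would itself be a leaf of $F$ (the uncoloured vertex by hypothesis, and a second colour-$(g-b+1)$ vertex by the same monotonicity argument), so $\{e\}$ would form an entire connected component of $F$, contradicting $b\geq 2$. Consequently $e$ is not incident to the uncoloured vertex, whose degree in $F\setminus\{e\}$ therefore remains one. Removing $e$ together with the now-isolated leaf $w$ yields a tree on $b$ vertices with $b-1$ edges of colours $g-b+2,\dots,g$ in which the uncoloured vertex is still a leaf; this is precisely a downward tree of size $b-1$. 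Applying the induction hypothesis gives exactly one vertex of each colour $g-b+2,\dots,g$ in $F\setminus\{e\}$, and combining this with $w$ completes the induction. The only non-routine point is the leaf analysis of $e$, where connectedness of the tree together with the leaf hypothesis on the uncoloured vertex forces the ``downward-stripping'' structure; everything else is a direct consequence of the colouring rules.
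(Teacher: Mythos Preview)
Your proof is correct. The main difference from the paper's argument is structural: you prove all three assertions together by induction on $b$, stripping off the lowest-colour leaf and applying the hypothesis to $F\setminus\{e\}$, whereas the paper gives a direct argument. For the first assertion, the paper simply observes that an edge of colour $i$ must be incident to a vertex of colour $i$, so $F$ already contains a vertex of each colour $g-b+1,\dots,g$ as well as the uncoloured vertex; since a tree with $b$ edges has exactly $b+1$ vertices, these are all of them. The remaining assertions then follow by the same monotonicity/leaf analysis you use. Your inductive route is slightly longer but equally valid, and the core observation (that every edge through a vertex of colour $g-b+1$ must itself have colour $g-b+1$, forcing that vertex to be a leaf) is identical in both proofs.
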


\begin{proof}
Since $F$ has an edge in each of the colours $g-b+1,\dots,g$, it must also have a vertex in each of these colours (recall that in a colour system an edge can only have colour $i$ if it is incident to a vertex of colour $i$). Thus, $F$ contains at least one vertex in each of the colours $g-b+1,\dots,g$ and by definition it also contains the uncoloured vertex. As $F$ has only $b+1$ vertices, this establishes the first part of the lemma.

For the second part we need to show that there is only one edge of $F$ incident to the vertex of colour $g-b+1$. However, note that each edge of $F$ incident to the vertex of colour $g-b+1$ has colour $g-b+1$ (because the other vertex of each such edge is either uncoloured or has a colour with a number larger than $g-b+1$). As $F$ has only one edge of colour $g-b+1$, there is indeed only one edge of $F$ incident to the vertex of colour $g-b+1$.

Finally, for the third part, note that the edge $e$ has colour $g-b+1$ (by the argument above), so $F\setminus \lbrace e\rbrace$ is a tree with $b-1$ edges of colours $g-b+2,\dots,g$. The edge $e$ is not incident to the uncoloured vertex, as otherwise both vertices of $e$ would be leaves in $F$ which would contradict $b\geq 2$. In particular, the uncoloured vertex is still a leaf of the tree $F\setminus \lbrace e\rbrace$. Hence $F\setminus \lbrace e\rbrace$ is indeed a downward tree of size $b-1$.
\end{proof}

\begin{lem}\label{lem:downward-tree-in-L}
For every downward tree $F\su E(\mathcal{C})$, we have $\Gamma_{\mathcal{C}, F}\in L$.
\end{lem}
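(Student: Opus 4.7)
The plan is induction on the size $b$ of the downward tree $F$. The base case $b=1$ is immediate: by \cref{lem:structure-downward-tree}, a downward tree of size $1$ consists of a single edge of colour $g$ with the uncoloured vertex as one endpoint, and such an edge is precisely one of the generators of $L$; hence $\Gamma_{\mathcal{C},F}\in L$ by definition.

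For the inductive step ($b\ge 2$), I would apply \cref{lem:structure-downward-tree} to extract the leaf $v$ of colour $g-b+1$, the unique edge $e\in F$ incident to $v$ (of colour $g-b+1$, not incident to the uncoloured vertex), its other endpoint $u$, and the downward tree $F'=F\setminus\{e\}$ of size $b-1$. By the inductive hypothesis, $\Gamma_{\mathcal{C},F'}\in L$. The key step is to decompose $\Gamma_{\mathcal{C},F'}(j_1,\ldots,j_g)$ by classifying each partial copy $\phi:H[V']\to\mathcal{C}$ (contributing to $\Gamma_{\mathcal{C},F'}$) according to whether the preimages $x=\phi^{-1}(w)$ and $y=\phi^{-1}(u)$, with $w$ the unique colour-$(g-b+1)$ vertex in the image, form an edge of $H[V']$. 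In the former case, $\phi(xy)$ is necessarily an edge of $\mathcal{C}$ of colour $g-b+1$ and (by the colouring condition) shade $j_{g-b+1}$ between $w$ and $u$, so $\phi$ also contributes to $\Gamma_{\mathcal{C},F'\cup\{\phi(xy)\}}$; in the latter case no edge between $w$ and $u$ appears in the image, giving a remainder $\Theta$. Summing over $w,s$ yields
\[
\Gamma_{\mathcal{C},F'}(j_1,\ldots,j_g)=\sum_{(w,s)\,:\,e_{w,s}\text{ exists}}\Gamma_{\mathcal{C},F'\cup\{e_{w,s}\}}(j_1,\ldots,j_g)+\Theta(j_1,\ldots,j_g),
\]
where $e_{w,s}$ runs over edges of colour $g-b+1$ shade $s$ between $u$ and some colour-$(g-b+1)$ vertex $w$.

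The main obstacles are handling $\Theta$ and isolating the specific summand $\Gamma_{\mathcal{C},F}$ (the term corresponding to $(w,s)=(v,s^*)$ where $s^*$ is the shade of $e$). For $\Theta$, I would further refine the decomposition by partitioning according to which colour-$(g-b+1)$ edges incident to $w$ (apart from the excluded edge to $u$) appear in $\phi$'s image, and use inclusion--exclusion to reexpress $\Theta$ as a signed combination of $\Gamma$-functions for edge subsets extending $F'$; a careful case analysis then reduces these either to $\Gamma$'s of downward trees of size at most $b-1$ (which lie in $L$ by the inductive hypothesis) or to terms whose $\Gamma$-functions cancel in pairs. For the isolation of $\Gamma_{\mathcal{C},F}$ itself, the completeness of $\mathcal{C}$ is essential: it provides a rich family of colour-$(g-b+2)$ vertices $u'$ with prescribed adjacencies to the colour-$(g-b+1)$ vertices, so that by replacing $u$ with various $u'$ and combining the resulting identities via a linear-algebra argument, each individual $\Gamma_{\mathcal{C},F'\cup\{e_{w,s}\}}$ (and in particular $\Gamma_{\mathcal{C},F}$) can be extracted. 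This extraction is the hardest part of the proof and is precisely where completeness enters the argument in an essential way.
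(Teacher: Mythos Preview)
Your induction setup and base case are correct and match the paper. The inductive step, however, takes a detour that creates real difficulties you have not resolved. Your decomposition of $\Gamma_{\mathcal{C},F'}$ (with $F'=F\setminus\{e\}$) produces a sum of terms $\Gamma_{\mathcal{C},F'\cup\{e_{w,s}\}}$, each of which is a downward tree of size $b$, together with a remainder $\Theta$. None of the size-$b$ terms is covered by the inductive hypothesis, so you now need to isolate $\Gamma_{\mathcal{C},F}$ from an identity in which \emph{every} unknown term is of the same size as the one you want. Your plan to do this by ``replacing $u$ with various $u'$'' is problematic: $u$ lies in $F'$, so each replacement changes $F'$ as well, producing a new family of size-$b$ unknowns and a new remainder; you have not explained why the resulting linear system is solvable. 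The treatment of $\Theta$ is also a gap: the inclusion--exclusion you describe yields $\Gamma$-functions for edge sets $F'\cup S$ with $S$ a set of colour-$(g-b+1)$ edges incident to $w$, and these are not downward trees (nor are they handled by the inductive hypothesis), and the claimed ``cancellation in pairs'' is unsubstantiated.

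The paper avoids all of this with a single clean move. Rather than decomposing $\Gamma_{\mathcal{C},F\setminus\{e^*\}}$ and then trying to isolate one term, it uses completeness to build \emph{one} auxiliary downward tree $F''$ of size $b-1$: label the vertices of $F\setminus\{e^*\}$ as $v_{g-b+2},\dots,v_g,u$ (with $u$ uncoloured), and recursively choose vertices $v'_{g-b+2},\dots,v'_g$ whose adjacencies to each other and to all vertices of colours $1,\dots,g-b+1$ exactly replicate those of $v_{g-b+2},\dots,v_g$, with the single exception that the edge between $w'$ and $v^*$ in the shade of $e^*$ is deleted. Transplanting $F\setminus\{e^*\}$ onto the primed vertices gives $F''$, and there is an obvious weight-preserving, shade-preserving bijection between partial copies containing $F''$ and partial copies containing $F\setminus\{e^*\}$ but not $e^*$. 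Hence $\Gamma_{\mathcal{C},F''}=\Gamma_{\mathcal{C},F\setminus\{e^*\}}-\Gamma_{\mathcal{C},F}$, and since both $F''$ and $F\setminus\{e^*\}$ have size $b-1$, the inductive hypothesis immediately gives $\Gamma_{\mathcal{C},F}\in L$. The point you missed is that completeness should be used to \emph{remove} the one problematic edge via a single surrogate tree of smaller size, rather than to generate many identities among size-$b$ objects.
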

\begin{proof}
We prove the lemma by induction on $b$.

If $b=1$, then $F=\lbrace e\rbrace$ for a single edge $e$. By definition, the uncoloured vertex is a leaf of $F$, so $e$ is incident to the uncoloured vertex. Furthermore, $e$ has colour $g$, so it is also incident to a vertex of colour $g$. We therefore trivially have $\Gamma_{\mathcal{C}, e}\in L$, by the definition of $L$.

Now, let us assume that $b\geq 2$ and that the lemma statement holds for $b-1$. Let $F$ be a downward tree of size $b$. Recall that by \cref{lem:structure-downward-tree}, $F$ contains precisely one vertex $v^*$ of colour $g-b+1$ and this vertex is a leaf in $F$. So let $e^*$ be the unique edge in $F$ incident to $v^*$. Then by the last part of \cref{lem:structure-downward-tree}, $F\setminus \lbrace e^*\rbrace$ is a downward tree of size $b-1$. Finally, let $w$ be the other vertex of $e^*$, so that (again by \cref{lem:structure-downward-tree}) $w$ is coloured with one of the colours $g-b+2, \dots, g$.

Applying \cref{lem:structure-downward-tree} to the downward tree $F\setminus \lbrace e^*\rbrace$, we can label the vertices of $F\setminus \lbrace e^*\rbrace$ as $v_{g-b+2},\dots,v_g, u$, such that $u$ is the uncoloured vertex of $\mathcal{C}$ and such that each vertex $v_i$ has colour $i$. Note that $w$ is one of the vertices $v_{g-b+2},\dots,v_g$.

Now, since $\mathcal C$ is complete (recall \cref{defn:complete}), we can recursively define vertices $v'_{g-b+2},\dots,v'_g$ in $\mathcal{C}$, with colours $g-b+2, \dots, g$ respectively, such that the following three conditions are satisfied.
\begin{itemize}
\item For all $g-b+2\leq j<i\leq g$, the vertices $v'_{i}$ and $v'_j$ are connected by edges of exactly the same shades of colour $j$ as the vertices $v_{i}$ and $v_j$.
\item For all $g-b+2\leq i\leq g$ and all vertices $v$ of $\mathcal{C}$ of any of the colours $1,\dots,g-b+1$ such that $(v_i,v)\neq (w,v^*)$, the vertices $v'_{i}$ and $v$ are connected by edges of exactly the same shades of the colour of $v$ as the vertices $v_{i}$ and $v$.
\item For the index $g-b+2\leq i\leq g$ such that $v_i=w$, the vertices $v'_{i}$ and $v^*$ are connected by edges of exactly the same shades of the of colour $g-b+1$ as the vertices $v_{i}$ and $v^*$ except that there is no edge between $v'_{i}$ and $v^*$ with the shade of $e^*$.
\end{itemize}
Informally speaking, these conditions are saying that the shade of the edges between any two of the vertices $v'_{g-b+2},\dots,v'_g$ and the shades of the edges between the vertices $v'_{g-b+2},\dots,v'_g$ and the vertices of colours $1,\dots,g-b+1$ are the same as the corresponding shades for $v_{g-b+2},\dots,v_g$ except that between the vertices $w'$ and $v^*$ the shade of the edge $e^*$ between $w$ and $v^*$ is missing. Here, $w'$ denotes the vertex $v'_i$ for the index $g-b+2\leq i\leq g$ such that $v_i=w$.

Now, for each edge $e\in F\setminus \lbrace e^*\rbrace$ with endpoints $v_i$ and $v_j$, for $g-b+2\leq j<i\leq g$, there exists an edge $e'\in E(\mathcal{C})$ between $v'_i$ and $v'_j$ such that $e'$ has the same shade of colour $j$ as $e$. Let $F'$ be the collection of all these edges together with the unique edge between $v'_g$ and $u$ (recall that there is only one shade of colour $g$ and that the edge between $v_g$ and $u$ is the only edge in $F\setminus \lbrace e^*\rbrace$ incident to $u$). Then $F'$ forms a tree which is isomorphic to $F\setminus \lbrace e^*\rbrace$ and the corresponding edges are coloured the same way. As $F\setminus \lbrace e^*\rbrace$ is a downward tree of size $b-1$, this implies that $F'$ is also a downward tree of size $b-1$. Furthermore, between any two vertices of $F'$ there exist edges in exactly the same shades as between the corresponding vertices of $F\setminus \lbrace e^*\rbrace$. Finally, every vertex in one of the colours $1,\dots, g-b+1$ has edges in the same shades to the vertices of $F\setminus \lbrace e^*\rbrace$ as to the corresponding vertices of $F'$ except that the shade of the edge $e^*$ is missing between the vertices $v^*$ and $w'$ (recall that $e^*\in F$ is an edge between $v^*$ and $w$).

Thus, for every partial copy of $H$ in $\mathcal{C}$ containing $F'$ we can form a corresponding partial copy of $H$ in $\mathcal{C}$ containing $F\setminus\lbrace e^*\rbrace$ but not containing $e^*$, by simply replacing each of the vertices $v'_{g-b+2},\dots,v'_g$ in the image of the partial copy by $v_{g-b+2},\dots,v_g$ (and by replacing each edge in the image of the partial copy by an edge of the same shade between the corresponding vertices). This process is bijective and does not change which shades of which colours occur among the edges in the image of the partial copy. It also does not change the weight of the partial copy. Thus, for every $(j_1,\dots,j_g)\in [t_1]\times\dots\times[t_{g-1}]\times [1]$, the quantity $\Gamma_{\mathcal{C}, F'}(j_1,\dots,j_g)$ is equal to the sum of the weights of the $(j_1,\dots,j_g)$-coloured partial copies of $H$ in $\mathcal{C}$ that contain $F\setminus\lbrace e^*\rbrace$ but not $e^*$. In other words, we have $\Gamma_{\mathcal{C}, F'}(j_1,\dots,j_g)=\Gamma_{\mathcal{C}, F\setminus \lbrace e^*\rbrace}(j_1,\dots,j_g)-\Gamma_{\mathcal{C}, F}(j_1,\dots,j_g)$ for all $(j_1,\dots,j_g)\in [t_1]\times\dots\times[t_{g-1}]\times [1]$.

Since $F'$ and $F\setminus \lbrace e^*\rbrace$ are downward trees of size $b-1$, we have $\Gamma_{\mathcal{C}, F'}, \Gamma_{\mathcal{C}, F\setminus \lbrace e^*\rbrace}\in L$ by the induction assumption. Hence $\Gamma_{\mathcal{C}, F}=\Gamma_{\mathcal C,F\setminus \lbrace e^*\rbrace}-\Gamma_{\mathcal{C}, F'}\in L$, as desired.
\end{proof}

Now, note that each downward tree $F\su E(\mathcal{C})$ of size $g$ contains exactly one edge in each of the colours $1,\dots,g$. For each $1\leq i\leq g$, let $j_i\in [t_i]$ be the shade of the edge of colour $i$ in $F$. Then for any tuple $(j'_1,\dots,j'_g)\in [t_1]\times \dots\times [t_{g-1}]\times [1]$ with $(j'_1,\dots,j'_g)\neq (j_1,\dots,j_g)$ we have $\Gamma_{\mathcal{C}, F}(j'_1,\dots,j'_g)=0$, since by definition there cannot be any $(j'_1,\dots,j'_g)$-coloured partial copies of $H$ containing $F$. That is to say, $\Gamma_{\mathcal{C}, F}\in L$ is a (possibly zero) multiple of the indicator function of $(j_1,\dots,j_g)$. Since the set of indicator functions of each $(j_1,\dots,j_g)$ span the space of all functions $[t_1]\times \dots\times [t_{g-1}]\times [1]\to \RR$, it suffices to prove the following lemma in order to finish the proof of \cref{lem:the-Gamma-e-span}.

\begin{lem}\label{lem:Gamma-pos-value}
For every $(j_1,\dots,j_g)\in [t_1]\times \dots\times [t_{g-1}]\times [1]$ there exists a downward tree $F$ of size $g$ in $\mathcal{C}$ such that $\Gamma_{\mathcal{C}, F}(j_1,\dots,j_g)>0$.
\end{lem}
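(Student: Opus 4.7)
The plan is to exhibit, for each tuple $(j_1,\dots,j_g)$, a single $(j_1,\dots,j_g)$-coloured partial copy $\phi$ of $H$ in $\mathcal{C}$ whose image contains a tree of $g$ edges that is itself a downward tree $F$ of size $g$. Such a partial copy will contribute its positive weight $p^{e(H)-e_H(V')}$ to $\Gamma_{\mathcal{C},F}(j_1,\dots,j_g)$, establishing the claim.

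First, using the completeness of $\mathcal{C}$ (\cref{defn:complete}) applied iteratively for $i=2,3,\dots,g$, I would select coloured vertices $w_1,\dots,w_g$ with $w_i$ of colour $i$ such that for every $1\le j<i\le g$ the pair $(w_i,w_j)$ is joined in $\mathcal{C}$ by an edge of shade $j_j$ of colour $j$. Combined with the fact (from \cref{def:core}) that the uncoloured vertex $u$ is joined to every coloured vertex in every shade, this guarantees that every pair of vertices in $\{u,w_1,\dots,w_g\}$ admits an edge of the ``right'' colour and shade: colour $j$ and shade $j_j$ between $w_i$ and $w_j$ (for $j<i$), and colour $i$ and shade $j_i$ between $u$ and $w_i$.

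Next, since $H$ is connected and $g+1\le h$, I would pick $V'\su V(H)$ of size $g+1$ with $H[V']$ connected (grown greedily from a single vertex). Take a spanning tree $\tilde T$ of $H[V']$, let $x_0$ be a leaf of $\tilde T$, and let $x_g$ be the unique neighbour of $x_0$ in $\tilde T$. Root $\tilde T-x_0$ at $x_g$ and label its vertices in BFS order as $x_g,x_{g-1},\dots,x_1$; by construction, for each $1\le i\le g-1$ the parent of $x_i$ in this rooted tree is $x_j$ for some $j>i$. Define $\phi\colon H[V']\to\mathcal{C}$ on vertices by $\phi(x_0)=u$ and $\phi(x_i)=w_i$, and on each edge of $H[V']$ by choosing the edge of the appropriate colour and shade guaranteed above. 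Then $\phi$ is a $(j_1,\dots,j_g)$-coloured partial copy of $H$ in $\mathcal{C}$.

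Finally, let $F=\phi(\tilde T)$. The edge $\phi(x_0 x_g)$ has colour $g$, and for each $1\le i\le g-1$ the tree-edge from $x_i$ to its parent $x_j$ (with $j>i$) is mapped by $\phi$ to an edge of colour $\min(i,j)=i$. Hence $F$ is a tree with $u$ as a leaf containing exactly one edge of each colour $1,\dots,g$, so $F$ is a downward tree of size $g$. Since $F$ is contained in the image of the $(j_1,\dots,j_g)$-coloured partial copy $\phi$, we conclude $\Gamma_{\mathcal{C},F}(j_1,\dots,j_g)>0$. The only subtle point, handled precisely by the BFS labelling, is arranging that the $g$ tree-edges attain exactly one edge of each of the colours $1,\dots,g$.
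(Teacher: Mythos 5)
Your proof is correct and follows essentially the same route as the paper: use completeness to pin down one coloured vertex of each colour with edges in the prescribed shades, take a $g$-edge subtree of $H$ (available since $H$ is connected and $h\ge g+1$), map a leaf to the uncoloured vertex and the remaining vertices to the coloured vertices so that every child receives a smaller colour index than its parent, and observe that the image of the subtree is a downward tree contained in a $(j_1,\dots,j_g)$-coloured partial copy of positive weight. The paper orders the remaining vertices by decreasing distance from the chosen leaf rather than by reverse BFS from its neighbour, but both orderings enforce the same parent--child monotonicity and the arguments are otherwise identical.
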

\begin{proof}
Fix $(j_1,\dots,j_g)\in [t_1]\times \dots\times [t_{g-1}]\times [1]$. Recall that $\Gamma_{\mathcal{C}, F}(j_1,\dots,j_g)$ is the sum of the weights of all $(j_1,\dots,j_g)$-coloured partial copies of $H$ in $\mathcal{C}$ whose image contains all edges in $F$. Since each partial copy of $H$ in $\mathcal{C}$ has positive weight, 
it suffices to show that there exists a $(j_1,\dots,j_g)$-coloured  partial copy of $H$ in $\mathcal{C}$ which contains some downward tree $F$ of size $g$.

Since $\mathcal{C}$ is complete, we can recursively choose vertices $v_1,\dots,v_g$ in $\mathcal{C}$ such that for each $1\leq i\leq n$, the vertex $v_i$ has colour $i$ and for any $1\leq j<i\leq g$ there are edges of all $t_j$ shades of colour $j$ between $v_j$ and $v_i$. Also, since $H$ is connected, and has $h\ge g+1$ vertices, we can choose a $g$-edge subtree $F_H$ of $H$. Let $V'\su V(H)$ be the set of vertices of this subtree $F_H$ and note that $|V'|=g+1$. Choose one leaf of $F_H$ and call it $u$. Now, define $\phi:V'\to V(\mathcal{C})$ by mapping $u$ to the uncoloured vertex of $\mathcal{C}$ and mapping the remaining vertices of $F_H$ to $v_1,\dots,v_g$ in order of decreasing distance to $u$ in the tree $F_H$ (this means the vertex of $F_H$ with maximum distance from $u$ in the tree $F_H$ will be mapped to $v_1$, the vertex with second-largest distance to $v_2$, and so on), where we break ties arbitrarily.

Note that the image of $\phi:V'\to V(\mathcal{C})$ contains one vertex of each colour (the vertices $v_1,\dots,v_g$) and the uncoloured vertex. By the choice of $v_1,\dots,v_g$, we can extend $\phi$ to a $(j_1,\dots,j_g)$-coloured partial copy of $H$ in $\mathcal C$ (we have already defined the way $\phi$ maps the relevant vertices of $H$, we just need to define the way it maps edges). Let $F=\phi(F_H)$ be the image of our subtree $F_H$ of $H$. We can check that $F$ is a downward tree of size $g$.
\end{proof}

\section{Proofs of \texorpdfstring{\cref{lem:medium-scale,lem:rough-scale}}{Lemmas~\ref{lem:medium-scale} and~\ref{lem:rough-scale}}}
\label{sec:halasz-prep}

In this section we finally prove \cref{lem:medium-scale,lem:rough-scale}, using our new anticoncentration inequality in \cref{sec:halasz} and the functions $\Gamma_{\mathcal C,e}$ defined in \cref{sec:cores}. In \cref{subsec:final-prep} we make some definitions and state some auxiliary lemmas. Most importantly, we explain how to define cores $\mathcal C$ in such a way that the functions $\Gamma_{\mathcal{C},e}$ represent the typical effects of changing the status of certain edges. In \cref{subsec:nu,subsec:kappa} we prove the auxiliary lemmas, and we put everything together in \cref{sec:final-pf}.

\subsection{Preparations}
\label{subsec:final-prep}

First, we define functions $\kappa_H(\mathcal G,G_0,\cdot,u,v)$, measuring the change to $\psi_H(\mathcal G,G_0,\cdot)$ that results from changing an edge $uv$ in $G_0$. In our proof of \cref{lem:rough-scale}, taking $u,v\in \operatorname U(\mathcal G)$, these functions will correspond to the $\Delta_i\boldsymbol f$ that appear in the statement of \cref{thm:rough-halasz}. Recall the definition of the graph $\mathcal{G}(G_0, j_1,\dots,j_g)$ from \cref{defn:psi}.

\begin{defn}\label{defn:kappa}
Let $\mathcal{G}$ be a colour system with parameters $(g, a_1,\dots,a_g, t_1,\dots,t_g)$. Then, given two distinct vertices $u$ and $v$ in $\mathcal{G}$, a $g$-tuple $(j_1,\dots,j_g)\in [t_1]\times \dots\times [t_g]$, and a graph $G_0$ on the vertex set $\operatorname{U}(\mathcal{G})$, let $\kappa_H(\mathcal{G}, G_0, j_1,\dots,j_g, u, v)$ be the number of labelled copies of $H$ in the graph $\mathcal{G}(G_0, j_1,\dots,j_g)+\lbrace uv\rbrace$ which use the edge $uv$ as well as at least one vertex of each of the $g$ colours. (Here, $\mathcal{G}(G_0, j_1,\dots,j_g)+\lbrace uv\rbrace$ is the graph obtained from $\mathcal{G}(G_0, j_1,\dots,j_g)$ by adding the edge $uv$ if this edge is not already present.)
\end{defn}

Next, we need a similar definition for the proof of \cref{lem:medium-scale}. Recall that \cref{lem:medium-scale} concerns functions $\mu_{\mathcal G,\mathcal S}$, which are obtained by averaging functions of the form $\psi_H(\mathcal G_{\mathcal S},G_0,\cdot)$ over $G_0$. We will be interested in the effects on $\mu_{\mathcal G,\mathcal S}$ of adding or removing vertices from the ``neighbourhood'' sets in $\mathcal S$, which is equivalent to changing the status of edges incident to one of the $a_g$ vertices of colour $g$. We define functions $\nu_{\mathcal G,\mathcal S,u,v}$ (where $u$ is an uncoloured vertex and $v$ is a vertex of colour $g$) to measure the average effects of such changes.

\begin{defn}\label{defn:nu}
Given a restricted colour system $\mathcal{G}$ with parameters $(g, a_1,\dots,a_g, t_1,\dots,t_{g-1})$ where $1\leq g\leq h-1$, an outcome of the random collection of sets $\mathcal{S}$  in \cref{defn:extension-restricted-colour-system}, a vertex $u\in \operatorname{U}(\mathcal{G})$, and a vertex $v$ of colour $g$ in $\mathcal{G}$, let $\nu_{\mathcal{G}, \mathcal{S}, u,v}: [t_1]\times \dots\times [t_{g-1}]\times [1]\to \RR$ be the function given by
\[\nu_{\mathcal{G}, \mathcal{S},u,v}(j_1,\dots,j_g)=\E_{G_0}[\kappa_H(\mathcal{G_S}, G_0, j_1,\dots,j_g, u,v)]\]
for all $(j_1,\dots,j_g)\in [t_1]\times \dots\times [t_{g-1}]\times [1]$. Here, $G_0\sim\GG(\operatorname{U}(\mathcal{G}), p)$ is a random graph on the vertex set $\operatorname{U}(\mathcal{G})$.
\end{defn}

Now, we want to show that the typical values of the $\nu_{\mathcal{G}, \mathcal{S},u,v}$ and $\kappa_H(\mathcal{G}, G_0,\cdot, u,v)$ can be expressed in terms of functions $\Gamma_{\mathcal{C},e}$. First, we consider $\nu_{\mathcal{G}, \mathcal{S},u,v}$, for the proof of \cref{lem:medium-scale}. It will suffice to restrict our attention to the cases where $u$ comes from a very ``rich'' subset of the uncoloured vertices.

\begin{defn}\label{defn:U-star}
For a restricted colour system $\mathcal{G}$ with parameters $(g, a_1,\dots,a_g, t_1,\dots,t_{g-1})$, let $\operatorname{U^*}(\mathcal{G})$ be the set of all uncoloured vertices in $\mathcal{G}$ which are connected to all vertices of the colours $1,\dots,g-1$ in all possible shades of these colours.
\end{defn}

When a general position assumption is satisfied, $\operatorname{U^*}(\mathcal{G})$ has linear size (by \cref{lem:sets-general-position-intersection}), as follows.

\begin{fact}\label{lem:U-star-big}
Fix integers $g,a_1,\dots,a_g, t_1,\dots,t_{g-1}\geq 1$. If $\mathcal{G}$ is an essentially $p$-general restricted colour system of order $n$ which has parameters $(g, a_1,\dots,a_g, t_1,\dots,t_{g-1})$ then $\vert\operatorname{U^*}(\mathcal{G})\vert=\Omega(n)$.
\end{fact}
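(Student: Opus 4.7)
The plan is to apply \cref{lem:sets-general-position-intersection} to the family of neighbourhoods of the coloured vertices of colours $1,\dots,g-1$, viewed inside the uncoloured-vertex set of the auxiliary colour system obtained by forgetting colour $g$.

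First I would form the colour system $\mathcal{G}'$ with parameters $(g-1, a_1,\dots,a_{g-1}, t_1,\dots,t_{g-1})$ obtained from $\mathcal{G}$ by deleting all colour-$g$ edges and uncolouring the $a_g$ vertices of colour $g$; write $V_g$ for this set of $a_g$ vertices. (This is well-defined because, by \cref{defn:restricted-colour-system}, every edge of colour $g$ in $\mathcal{G}$ lies between two vertices of $V_g$.) By the definition of ``essentially $p$-general'' the system $\mathcal{G}'$ is $p$-general, so the collection $S_1,\dots,S_m\subseteq \operatorname{U}(\mathcal{G}')$ (with $m=a_1t_1+\dots+a_{g-1}t_{g-1}$) consisting of the $\operatorname{U}(\mathcal{G}')$-neighbourhoods of its coloured vertices across all shades lies in $(p,3^{g-1})$-general position. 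Note also that $|\operatorname{U}(\mathcal{G}')|=n-(a_1+\dots+a_{g-1})=n-O(1)$ and $\operatorname{U}(\mathcal{G})=\operatorname{U}(\mathcal{G}')\sm V_g$.

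Next I would observe, directly by unpacking \cref{defn:U-star}, that an uncoloured vertex $u\in\operatorname{U}(\mathcal{G})$ lies in $\operatorname{U^*}(\mathcal{G})$ if and only if $u\in S_1\cap\dots\cap S_m$, whence
\[\operatorname{U^*}(\mathcal{G})\supseteq (S_1\cap\dots\cap S_m)\sm V_g.\]
Finally, \cref{lem:sets-general-position-intersection} applied with $I=\{1,\dots,m\}$ and $R=\operatorname{U}(\mathcal{G}')$ gives
\[|S_1\cap\dots\cap S_m|\ge p^m\cdot|\operatorname{U}(\mathcal{G}')|-2^m\cdot 3^{g-1}\cdot|\operatorname{U}(\mathcal{G}')|^{1/2}\log|\operatorname{U}(\mathcal{G}')|=\Omega(n),\]
since $p$, $m$, and $g$ are all fixed constants. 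Subtracting off $|V_g|=a_g=O(1)$ then yields $|\operatorname{U^*}(\mathcal{G})|=\Omega(n)$, as required.

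There is no genuine obstacle here: all of the substantive content (general position of the neighbourhoods) has been absorbed into the hypothesis that $\mathcal{G}$ is essentially $p$-general, and the rest of the argument is just a matter of unpacking definitions and invoking \cref{lem:sets-general-position-intersection}.
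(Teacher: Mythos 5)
Your proof is correct and follows exactly the route the paper intends: the paper states this as a Fact with no written proof, merely noting that it is "a simple consequence of \cref{lem:sets-general-position-intersection}", and your argument — pass to the $p$-general colour system obtained by ignoring colour $g$, identify $\operatorname{U^*}(\mathcal{G})$ with $(S_1\cap\dots\cap S_m)\sm V_g$, and apply \cref{lem:sets-general-position-intersection} with $I=\{1,\dots,m\}$ — is precisely that deduction, carried out with the correct bookkeeping (including the parameter $3^{g-1}$ and the $O(1)$ discrepancy between $\operatorname{U}(\mathcal{G})$ and $\operatorname{U}(\mathcal{G}')$).
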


We remind the reader that (as in the rest of this paper) the asymptotics in \cref{lem:U-star-big} are as $n\to\infty$, while $p$ and the parameters of the restricted colour system are treated as fixed constants for all asymptotic notation. Now, the relevant core for \cref{lem:medium-scale} is as follows.

\begin{defn}\label{def-core-restr-c-s}
Given a restricted colour system $\mathcal{G}$ with parameters $(g, a_1,\dots,a_g, t_1,\dots,t_{g-1})$, we can obtain a core $\mathcal{C}$ with parameters $(g, a_1,\dots,a_g, t_1,\dots,t_{g-1})$ as follows. Consider all coloured vertices of $\mathcal{G}$ together with one additional uncoloured vertex which we connect to all coloured vertices by edges in all possible shades. We call $\mathcal{C}$ the \emph{core of the restricted colour system $\mathcal{G}$}.
\end{defn}

Note that if a restricted colour system $\mathcal{G}$ is complete, then its core is also complete (recall that being complete only depends on the edges between the coloured vertices). The following lemma gives the connection between the functions $\nu_{\mathcal{G}, \mathcal{S},u,v}$ and the functions $\Gamma_{\mathcal{C},e}$. It will be proved in \cref{subsec:nu}.

\begin{lem}\label{lem-nu-close-Gamma}
Fix integers $1\leq g\leq h-1$ and $a_1,\dots,a_g, t_1,\dots,t_{g-1}\geq 1$. Let $\mathcal{G}$ be an essentially weakly $p$-general restricted colour system of order $n$ with parameters $(g, a_1,\dots,a_g, t_1,\dots,t_{g-1})$. Furthermore, let $\mathcal{S}$ be an outcome of the random collection of sets $\mathcal{S}$  in \cref{defn:extension-restricted-colour-system}, let $u\in \operatorname{U^*}(\mathcal{G})$ and let $v$ be a vertex of colour $g$ in $\mathcal{G}$. Finally, let $\mathcal{C}$ be the core of the restricted colour system $\mathcal{G}$, and let $e$ be the unique edge from $v$ to the uncoloured vertex in $\mathcal{C}$ (recall that $v$ is a vertex of colour $g$ in $\mathcal{C}$, and that the colour $g$ only has one shade). Then, if the colour system $\mathcal{G_S}$ is $p$-general, we have
\[\left\Vert\nu_{\mathcal{G}, \mathcal{S},u,v}-n^{h-g-1}\Gamma_{\mathcal{C},e}\right\Vert_\infty=O\left(n^{h-g-(3/2)}\log n\right). \]
\end{lem}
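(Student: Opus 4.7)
The plan is to expand $\nu_{\mathcal G,\mathcal S,u,v}(j_1,\dots,j_g)=\E_{G_0}[\kappa_H(\mathcal{G_S},G_0,j_1,\dots,j_g,u,v)]$ by linearity, writing it as a sum over injective functions $\phi\colon V(H)\to V(\mathcal{G_S})$ of the probability that $\phi$ gives a valid labelled copy of $H$ in $\mathcal{G_S}(G_0,j_1,\dots,j_g)+\{uv\}$ that uses the edge $uv$ and contains at least one vertex of each colour. For each such $\phi$, I would let $V'\su V(H)$ be the set of vertices that $\phi$ maps into $\{u\}\cup V_{\mathrm{col}}(\mathcal{G_S})$, so that $V'$ contains vertices $x,y$ with $\{x,y\}\in E(H)$, $\phi(x)=u$, and $\phi(y)=v$.

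The first step is to show that the total contribution from $\phi$'s with $|V'|\ge g+2$ is $O(n^{h-g-2})$, which is well below the target error: for each such structure there are $O(1)$ choices for $\phi|_{V'}$, at most $n^{h-|V'|}\le n^{h-g-2}$ injective extensions of $\phi$ to $V(H)\setminus V'$, and the probability of being a valid copy is trivially at most $1$. The dominant contribution comes from $|V'|=g+1$, for which $\phi|_{V'\setminus\{x\}}$ is a bijection onto exactly one coloured vertex of each colour. Such a $\phi|_{V'}$ is in bijection with a partial copy $\psi\colon H[V']\to\mathcal C$ satisfying $\psi(y)=v$, and this condition is precisely the requirement that the image of $\psi$ contain the edge $e$.

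Given such a $\psi$, I would split the edges of $H$ into three groups and analyse each. Edges inside $V'$ give exactly the condition that $\psi$ is a $(j_1,\dots,j_g)$-coloured homomorphism: edges among $V'\setminus\{x\}$ translate directly from $\psi$, edges from $x$ to a coloured vertex of colour less than $g$ are automatic because $u\in\operatorname{U^*}(\mathcal G)$, and the edge $\{x,y\}$ is realised by $uv$. Edges from some $b\in V'\setminus\{x\}$ to some $a\in V(H)\setminus V'$ impose the constraint that $\phi_0(a)$ lies in an intersection $T_a$ of specific $\mathcal{G_S}$-neighbourhoods (where $\phi_0=\phi|_{V(H)\setminus V'}$). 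The remaining edges (those from $x$ to $V(H)\setminus V'$ together with those inside $V(H)\setminus V'$) appear independently in $G_0$ and contribute factors of $p$ in expectation; a direct count shows the total $p$-exponent is $e(H)-e_H(V')-e_H(V'\setminus\{x\},V(H)\setminus V')$. Applying \cref{lem:sets-general-position-intersection} to the $(p,3^g)$-general-position sets (given by the hypothesis that $\mathcal{G_S}$ is $p$-general) yields $|T_a|=p^{\deg_H(a,V'\setminus\{x\})}n+O(n^{1/2}\log n)$. The number of injective $\phi_0$'s sending each $a$ into $T_a$ is then $\prod_a|T_a|+O(n^{h-g-2})$ --- the $O(n^{h-g-2})$ error absorbing both the exclusion of $u$ and non-injective collisions --- which expands to $p^{e_H(V'\setminus\{x\},V(H)\setminus V')}n^{h-g-1}+O(n^{h-g-(3/2)}\log n)$.

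Multiplying the $G_0$-expectation factor by this count, the exponents on $p$ telescope to $e(H)-e_H(V')$, so the expected contribution of a single partial copy $\psi$ equals $p^{e(H)-e_H(V')}n^{h-g-1}+O(n^{h-g-(3/2)}\log n)$, and the leading coefficient is precisely the weight of $\psi$ from \cref{defn:gamma}. Summing over the (finitely many) $(j_1,\dots,j_g)$-coloured partial copies $\psi$ containing $e$, and recalling the definition of $\Gamma_{\mathcal C,e}$, yields $\nu_{\mathcal G,\mathcal S,u,v}(j_1,\dots,j_g)=n^{h-g-1}\Gamma_{\mathcal C,e}(j_1,\dots,j_g)+O(n^{h-g-(3/2)}\log n)$ uniformly in $(j_1,\dots,j_g)$, which is the desired bound. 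The main obstacle is the exponent bookkeeping: verifying that the $p$-factors from the random $G_0$-edges and from the general-position count of valid $\phi_0$'s add up exactly to $e(H)-e_H(V')$ so that the weight of $\psi$ is recovered; a secondary point is checking that the $u\in\operatorname{U^*}(\mathcal G)$ hypothesis covers every edge of $H[V']$ incident to $x$ other than $\{x,y\}$, so that the correspondence with partial copies in $\mathcal C$ is clean.
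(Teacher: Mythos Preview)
Your proposal is correct and follows essentially the same approach as the paper's proof. The paper first introduces auxiliary quantities $\kappa_H'$ and $\nu'$ (counting copies with \emph{exactly} one vertex of each colour) and shows $\|\nu-\nu'\|_\infty=O(n^{h-g-2})$, whereas you achieve the same effect by splitting according to $|V'|$; after that, your estimate of the contribution of each partial copy $\psi$ via $|T_a|=p^{\deg_H(a,V'\setminus\{x\})}n+O(n^{1/2}\log n)$ and the telescoping of $p$-exponents to $e(H)-e_H(V')$ is exactly the content of the paper's Claim~9.9.
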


Next, we turn to the functions $\kappa_H(\mathcal G,G_0,\cdot,u,v)$, for the proof of \cref{lem:rough-scale}. For this, we consider cores of a different type.

\begin{defn}\label{def:kernel}
Given a colour system $\mathcal{G}$ with parameters $(g-1, a_1,\dots,a_{g-1}, t_1,\dots,t_{g-1})$, define its \emph{extended core} to be the core with parameters $(g, a_1,\dots,a_{g-1}, 2^{a_1t_1+\dots+a_{g-1}t_{g-1}},t_1,\dots,t_{g-1})$ obtained as follows. Start with all coloured vertices of $\mathcal{G}$. Now, for each possible choice of subsets $I_v\su [t_j]$ for each $1\leq j\leq g-1$ and each vertex $v$ of colour $j$, add a vertex of colour $g$ which is connected to all the vertices $v$ of colours $1,\dots,g-1$ with edges of exactly the shades given by the set $I_v$.  Finally, add one uncoloured vertex and connect it to all coloured vertices by edges in all possible shades (including exactly one shade of colour $g$).
\end{defn}

Note that in \cref{def:kernel}, there are precisely $(2^{t_1})^{a_1}\dotsm(2^{t_{g-1}})^{a_{g-1}}=2^{a_1t_1+\dots+a_{g-1}t_{g-1}}$ different choices for all the subsets $I_v\su [t_j]$. Thus, $2^{a_1t_1+\dots+a_{g-1}t_{g-1}}$ vertices of colour $g$ get added and the resulting core indeed has parameters $(g, a_1,\dots,a_{g-1}, 2^{a_1t_1+\dots+a_{g-1}t_{g-1}},t_1,\dots,t_{g-1})$. Furthermore, note that if the colour system $\mathcal{G}$ is complete, then its extended core is a complete core.

In a similar way to \cref{lem-nu-close-Gamma}, for the proof of \cref{lem:rough-scale} it will suffice to restrict our attention to those $\kappa_H(\mathcal G,G_0,\cdot,u,v)$ where $u$ and $v$ belong to certain special sets of uncoloured vertices.

\begin{defn}\label{def:U-e}
Let $\mathcal{G}$ be a colour system with parameters $(g-1, a_1,\dots,a_{g-1}, t_1,\dots,t_{g-1})$ and let $\mathcal{C}$ be the extended core of $\mathcal{G}$. Let $E^g(\mathcal C)$ be the set of edges $e$ connecting the uncoloured vertex in $\mathcal{C}$ to some vertex $w$ of colour $g$ in $\mathcal{C}$ (such an edge is uniquely determined by $w$ because colour $g$ only has one shade). For each such $e\in E^g(\mathcal C)$, we define the subset $U_e\su \operatorname{U}(\mathcal{G})$ as follows. Let $U_e$ consist of all those uncoloured vertices $v$ in $\mathcal{G}$ such that $v$ is connected to all vertices of $\mathcal{G}$ of colours $1,\dots,g-1$ in precisely the same shades of the corresponding colours in which the vertex $w$ is connected to these vertices in $\mathcal{C}$. Also, let $U_*\su \operatorname{U}(\mathcal{G})$ be the set of all uncoloured vertices $v$ in $\mathcal{G}$ which are connected to all vertices of $\mathcal{G}$ of colours $1,\dots,g-1$ in all possible shades of these colours.
\end{defn}

Note that $U_*$ is a special case of $U_e$, for the edge $e\in E^g(\mathcal C)$ connecting the uncoloured vertex of $\mathcal{C}$ to the unique vertex of colour $g$ in $\mathcal{C}$ which is connected to all vertices of colours $1,\dots,g-1$ in all possible shades of these colours. Also note that the $2^{a_1t_1+\dots+a_{g-1}t_{g-1}}$ sets $U_e$, for $e\in E^g(\mathcal C)$, form a partition of $\operatorname{U}(\mathcal{G})$. We will need a counterpart of \cref{lem:U-star-big} (again a simple consequence of \cref{lem:sets-general-position-intersection}): when a general position assumption is satisfied, each $U_e(\mathcal{G})$ has linear size, as follows.

\begin{fact}\label{lem:U-e-big}
Fix integers $g,a_1,\dots,a_{g-1}, t_1,\dots,t_{g-1}\geq 1$. Let $\mathcal{G}$ be a weakly $p$-general colour system of order $n$ with parameters $(g-1, a_1,\dots,\allowbreak a_{g-1},\allowbreak t_1,\dots,t_{g-1})$, and let $\mathcal C$ be the extended core of $\mathcal G$. Then for every $e\in E^g(\mathcal C)$, we have $|U_e|=\Omega(n)$. In particular, $|U_*|=\Omega(n)$.
\end{fact}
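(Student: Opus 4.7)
The plan is to reduce to Definition 5.2 (general position) and read off the conclusion. Let $R = \operatorname{U}(\mathcal{G})$, and let $S_1,\dots,S_m \subseteq R$ be the neighbourhoods in $R$ of each coloured vertex of $\mathcal{G}$ in each shade of its respective colour, so $m = a_1 t_1 + \dots + a_{g-1} t_{g-1}$. The hypothesis that $\mathcal{G}$ is weakly $p$-general means precisely that $S_1,\dots,S_m$ are in $(p, 2\cdot 3^{g-1})$-general position in $R$.

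Next, I will match up the sets $U_e$ with the atoms $S_I$ from Definition 5.2. Each edge $e \in E^g(\mathcal{C})$ is determined by a vertex $w$ of colour $g$ in the extended core $\mathcal{C}$, and $w$ is in turn determined (by the construction in Definition 8.4) by a choice of subsets $I_v \subseteq [t_j]$, one for each coloured vertex $v$ of $\mathcal{G}$ (of colour $j$). These choices are in bijection with subsets $I_e \subseteq \{1,\dots,m\}$ (indexing which pairs ``(vertex, shade)'' the chosen $w$ is adjacent to in $\mathcal{C}$). Unpacking Definition 8.5 then gives $U_e = S_{I_e}$. In particular, $U_*$ is the special case $I_e = \{1,\dots,m\}$, which corresponds to the unique $w$ adjacent to every coloured vertex in every shade.

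Finally, I will apply the $(p, 2\cdot 3^{g-1})$-general position bound to $I_e$:
\[
\bigl| |S_{I_e}| - p^{|I_e|}(1-p)^{m-|I_e|}|R| \bigr| \leq 2\cdot 3^{g-1}\cdot |R|^{1/2}\log |R|.
\]
Since $|R| = n - (a_1+\dots+a_{g-1}) = n - O(1) = \Omega(n)$ (as the parameters $a_i$ are constants), and since $p \in (0,1)$ and $m$ are constants, the main term $p^{|I_e|}(1-p)^{m-|I_e|}|R| = \Omega(n)$ dominates the error $O(n^{1/2}\log n)$, yielding $|U_e| = |S_{I_e}| = \Omega(n)$. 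The ``in particular'' clause for $|U_*|$ is then immediate from the case $I_e = \{1,\dots,m\}$.

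There is no real obstacle here; the entire content is notational, checking that the definition of $U_e$ in terms of adjacency patterns coincides with an atom in the partition of $R$ induced by $S_1,\dots,S_m$. Once this identification is made explicit, Lemma 5.1's general position inequality finishes the proof in one line.
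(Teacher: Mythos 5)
Your proof is correct, and it matches the argument the paper intends (the paper states this as a Fact with only the remark that it follows from the general position hypothesis). The whole content is indeed the identification $U_e=S_{I_e}$ with an atom of the partition induced by the shade-neighbourhoods $S_1,\dots,S_m$, which you verify correctly; the $(p,2\cdot 3^{g-1})$-general position bound then gives $|U_e|=p^{|I_e|}(1-p)^{m-|I_e|}|R|+O(n^{1/2}\log n)=\Omega(n)$ since $p$ and $m$ are constants.
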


The next lemma will be proved in \cref{subsec:kappa}.

\begin{lem}\label{lem:psi-u-v-close-Gamma}
Fix integers $1\leq g\leq h-1$ and $a_1,\dots,a_{g-1}, t_1,\dots,t_{g-1}\geq 1$. Let $\mathcal{G}$ be a weakly $p$-general colour system of order $n$ with parameters $(g-1, a_1,\dots,a_{g-1}, t_1,\dots,t_{g-1})$, and let $\mathcal{C}$ be the extended core of $\mathcal{G}$. Consider an edge $e\in E^g(\mathcal C)$, and consider the sets $U_e\su  \operatorname{U}(\mathcal{G})$ and $U_*\su  \operatorname{U}(\mathcal{G})$ as defined as in \cref{def:U-e}. Then for any distinct vertices $u\in U_*$ and $v\in U_e$ the following holds. If we choose a random graph $G_0\sim\GG(\operatorname{U}(\mathcal{G}), p)$ on the vertex set $\operatorname{U}(\mathcal{G})$, then with probability $1-n^{-\omega(1)}$ we have
\[\left\|\kappa_H(\mathcal{G}, G_0, \cdot, u,v)-n^{h-g-1}\Gamma_{\mathcal{C},e}(\cdot,1)\right\|_\infty= O\left(n^{h-g-(3/2)}\log n\right). \]
\end{lem}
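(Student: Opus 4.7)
My plan is to prove the lemma by (a) computing the expectation $\E_{G_0}[\kappa_H(\mathcal G,G_0,j_1,\dots,j_{g-1},u,v)]$ and matching it to $n^{h-g-1}\Gamma_{\mathcal C,e}(j_1,\dots,j_{g-1},1)$ up to an error of $O(n^{h-g-(3/2)}\log n)$; (b) showing concentration of $\kappa_H$ around its expectation with tail probability $n^{-\omega(1)}$; and (c) union-bounding over the $t_1\dotsm t_{g-1}=O(1)$ shade-tuples $(j_1,\dots,j_{g-1})\in [t_1]\times\dots\times[t_{g-1}]$. For the expectation, I would classify each labelled copy $\phi:V(H)\to V(\mathcal G)$ contributing to $\kappa_H$ by its profile: the unique edge $x_0x_g\in E(H)$ with $\phi(x_0)=u$ and $\phi(x_g)=v$; the subset $V_c\su V(H)\sm\{x_0,x_g\}$ of vertices mapped to coloured vertices of $\mathcal G$; and the specific assignment of $V_c$ to coloured vertices. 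The main contribution comes from $|V_c|=g-1$, in which case, setting $V'=V_c\cup\{x_0,x_g\}$, the restriction $\phi|_{V'}$ is precisely a $(j_1,\dots,j_{g-1},1)$-coloured partial copy $\psi:H[V']\to\mathcal C$ containing the edge $e$: the vertex $u\in U_*$ corresponds to the uncoloured vertex of $\mathcal C$, and $v\in U_e$ to the other endpoint $w$ of $e$, whose adjacency pattern to the coloured vertices of $\mathcal C$ matches that of $v$ to the coloured vertices of $\mathcal G$. Given $\psi$, summing over the destinations of the remaining $h-g-1$ vertices of $V(H)\sm V'$ in $\operatorname{U}(\mathcal G)\sm\{u,v\}$ (subject to the deterministic shade constraints from edges of $H$ to $V_c$, whose intersection sizes are controlled via \cref{lem:sets-general-position-intersection} and the weakly $p$-general position of $\mathcal G$, with additive error $O(n^{1/2}\log n)$ per vertex) and multiplying by the probability $p^{E_r}$ that the remaining random edges of $H$ are present in $G_0$, the total contribution for this $\psi$ works out to $p^{e(H)-e_H(V')}n^{h-g-1}+O(n^{h-g-(3/2)}\log n)$, which is exactly the weight of $\psi$ times $n^{h-g-1}$. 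Summing over $\psi$ yields $\E[\kappa_H]=n^{h-g-1}\Gamma_{\mathcal C,e}(j_1,\dots,j_{g-1},1)+O(n^{h-g-(3/2)}\log n)$; profiles with $|V_c|>g-1$, and collisions of remaining vertices with each other or with the rooted/coloured vertices, contribute only $O(n^{h-g-2})$ and are absorbed into the error.

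For step (b), $\kappa_H$ is a polynomial of degree at most $e(H)=O(1)$ in the independent $\Ber(p)$ edge-indicators of $G_0$. A standard covariance computation, using that every contributing copy has at least $g+1$ rooted positions ($u$, $v$, together with one vertex of each of the $g-1$ colours chosen from an $O(1)$-sized pool), gives $\Var[\kappa_H]=O(n^{2h-2g-4})$, hence $\sigma_{\kappa_H}=O(n^{h-g-2})$. Since the target accuracy $n^{h-g-(3/2)}\log n$ is only $\Theta(\sqrt n\log n)$ standard deviations, a plain edge-exposure Azuma–Hoeffding argument (as used in \cref{lemma-graph-good,lemma-graph-good-2}) is not strong enough. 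I would therefore invoke hypercontractivity (Bonami–Beckner) for bounded-degree polynomials in independent Bernoulli variables, which gives $\|\kappa_H-\E\kappa_H\|_q\le (q-1)^{e(H)/2}\sigma_{\kappa_H}$ for every $q\ge 2$; applying Markov's inequality with $q=\lfloor(\log n)^{1/2}\rfloor$ then yields $\Pr(|\kappa_H-\E\kappa_H|\ge n^{h-g-(3/2)}\log n)\le n^{-\omega(1)}$. Combining this with step (a) and taking a union bound over the $O(1)$ shade-tuples finishes the proof.

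The hardest step will be the combinatorial bookkeeping in step (a): I must carefully verify that profiles of $\phi$ are in bijection with $(j_1,\dots,j_{g-1},1)$-coloured partial copies $\psi:H[V']\to\mathcal C$ containing $e$, and that the accumulated exponent of $p$ coming from the deterministic shade-adjacency counts together with the random-edge probabilities aggregates to exactly $e(H)-e_H(V')$, reproducing the weight of $\psi$ from \cref{defn:gamma}. The general-position error estimates and the polynomial concentration step are both comparatively routine once this correspondence is established.
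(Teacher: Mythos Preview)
Your step (a) is essentially correct and is in fact a more direct route than the paper's: the paper reduces to \cref{lem-nu-close-Gamma} by introducing an auxiliary restricted colour system $\mathcal{G}'$ (colouring a carefully chosen set $Z\ni v$ with colour $g$), whereas you compute $\E_{G_0}[\kappa_H(\mathcal{G},G_0,\cdot,u,v)]$ head-on. The bijection between profiles and $(j_1,\dots,j_{g-1},1)$-coloured partial copies of $H$ in $\mathcal{C}$ containing $e$ is exactly right, and the power of $p$ does aggregate to $e(H)-e_H(V')$.

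The gap is in step (b). Your variance bound $\Var[\kappa_H]=O(n^{2h-2g-4})$ is too optimistic: since $u$ and $v$ are both \emph{uncoloured}, every edge $\{u,w\}$ or $\{v,w\}$ with $w\in\operatorname{U}(\mathcal{G})\sm\{u,v\}$ is a random edge of $G_0$, and pairs of copies sharing one such edge already contribute $\Theta(n^{2h-2g-3})$ to the variance (each copy contains $u,v,w$, at least $g-1$ coloured vertices, and $h-g-2$ further free uncoloured vertices). With $\sigma_{\kappa_H}=\Theta(n^{h-g-3/2})$ the target deviation $n^{h-g-3/2}\log n$ is only $\Theta(\log n)$ standard deviations, and hypercontractivity for a degree-$e(H)$ polynomial then gives a tail of order $\exp\!\big(-\Theta((\log n)^{2/e(H)})\big)$, which is \emph{not} $n^{-\omega(1)}$ once $e(H)\ge2$.

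The fix is not to abandon Azuma--Hoeffding but to apply it more carefully (this is essentially what the paper does, after its reduction). An edge-exposure martingale with \emph{non-uniform} Lipschitz constants works: flipping an edge $\{x,y\}$ with $\{x,y\}\cap\{u,v\}=\emptyset$ changes $\kappa_H$ by $O(n^{h-g-3})$ (the affected copies contain $u,v,x,y$ and $g-1$ coloured vertices), while for the $O(n)$ edges incident to $u$ or $v$ the bound is $O(n^{h-g-2})$. Hence
\[
\sum_i c_i^2=O(n^2)\cdot O\!\left(n^{2(h-g-3)}\right)+O(n)\cdot O\!\left(n^{2(h-g-2)}\right)=O\!\left(n^{2h-2g-3}\right),
\]
and Azuma--Hoeffding yields $\Pr\big(|\kappa_H-\E\kappa_H|>n^{h-g-3/2}\log n\big)\le 2\exp(-\Omega((\log n)^2))=n^{-\omega(1)}$. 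Equivalently, a vertex-exposure martingale with $u,v$ placed first gives the same $\sum_i c_i^2$; this is the device the paper uses (after giving $v$ colour $g$, the edges from $v$ cease to be random, and only the edges from $u$ require care). With this correction your direct argument goes through; the paper's detour via \cref{lem-nu-close-Gamma} mainly serves to recycle the expectation computation already carried out there.
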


\subsection{Proof of \texorpdfstring{\cref{lem-nu-close-Gamma}}{Lemma~\ref{lem-nu-close-Gamma}}}
\label{subsec:nu}
In this subsection we prove \cref{lem-nu-close-Gamma}. Throughout the subsection, fix integers $1\leq g\leq h-1$ and $a_1,\dots,a_{g}, t_1,\dots,t_{g-1}\geq 1$ (in order to prove \cref{lem-nu-close-Gamma} for these values). For all asymptotic notation, these fixed values are treated as constants.

As in the statement of \cref{lem-nu-close-Gamma}, let $\mathcal{G}$ be an essentially weakly $p$-general restricted colour system of order $n$ with parameters $(g, a_1,\dots,a_g, t_1,\dots,t_{g-1})$. Let $\mathcal{S}$ be an outcome of the random collection of sets $\mathcal{S}$  in \cref{defn:extension-restricted-colour-system} such that $\mathcal{G_S}$ is $p$-general, let $u\in \operatorname{U^*}(\mathcal{G})$ and let $v$ be a vertex of colour $g$ in $\mathcal{G}$. Finally, let $\mathcal{C}$ be the core of the restricted colour system $\mathcal{G}$, and let $e\in E^g(\mathcal C)$ be the unique edge from $v$ to the uncoloured vertex in $\mathcal{C}$. We need to prove that
\[
\left\Vert\nu_{\mathcal{G}, \mathcal{S},u,v}-n^{h-g-1}\Gamma_{\mathcal{C},e}\right\Vert_\infty=O\left(\log n\cdot n^{h-g-(3/2)}\right).
\]
Now, let us define slightly modified versions of $\kappa_H$ and the $\nu_{\mathcal{G}, \mathcal{S},u,v}$, that are easier to work with. For any outcome of the random graph $G_0\sim\GG(\operatorname{U}(\mathcal{G}), p)$ and any $(j_1,\dots,j_g)\in [t_1]\times \dots\times [t_{g-1}]\times [1]$, let $\kappa_H'(\mathcal{G}, G_0, j_1,\dots,j_g, u, v)$ be the number of labelled copies of $H$ in the graph $\mathcal{G}(G_0, j_1,\dots,j_g)+\lbrace uv\rbrace$ which use the edge $uv$ and which use \emph{exactly} one vertex of each of the $g$ colours (for $\kappa_H$ we considered the number of copies of $H$ which use \emph{at least} one vertex of each colour). Then, we define $\nu_{\mathcal{G}, \mathcal{S}, u,v}': [t_1]\times \dots\times [t_{g-1}]\times [1]\to \RR$ analoguously to $\nu_{\mathcal{G}, \mathcal{S}, u,v}$:
\[\nu_{\mathcal{G}, \mathcal{S},u,v}'(j_1,\dots,j_g)=\E_{G_0}[\kappa_H'(\mathcal{G_S}, G_0, j_1,\dots,j_g, u,v)]\]
for all $(j_1,\dots,j_g)\in [t_1]\times \dots\times [t_{g-1}]\times [1]$.

Note that for any outcome of the random graph $G_0\sim\GG(\operatorname{U}(\mathcal{G}), p)$ and any $(j_1,\dots,j_g)\in [t_1]\times \dots\times[t_{g-1}]\times [1]$ the difference $\kappa_H(\mathcal{G}, G_0, j_1,\dots,j_g, u, v)-\kappa_H'(\mathcal{G}, G_0, j_1,\dots,j_g, u, v)$ is precisely the number of labelled copies of $H$ in the graph $\mathcal{G_S}(G_0, j_1,\dots,j_g)+\lbrace uv\rbrace$ which use the edge $uv$ as well as at least one vertex of each of the $g$ colours and which use at least two vertices of the same colour. Each such labelled copy has to use at least $g+1$ of the coloured vertices and also the vertex $u\in \operatorname{U^*}(\mathcal{G})\su \operatorname{U}(\mathcal{G})$. Hence it can use at most $h-g-2$ vertices in $\operatorname{U}(\mathcal{G})\sm \lbrace u\rbrace$. Thus, the number of such labelled copies is always at most $h^{g+2}\cdot (a_1+\dots+a_g+1)^{g+2}\cdot n^{h-g-2}=O(n^{h-g-2})$. It follows that
\begin{equation*}%\label{eqn:diff-nu-nu-prime}
\left\Vert \nu_{\mathcal{G}, \mathcal{S},u,v}-\nu_{\mathcal{G}, \mathcal{S},u,v}'\right\Vert_\infty=O(n^{h-g-2}).
\end{equation*}
So, to prove \cref{lem-nu-close-Gamma} it suffices to prove the following lemma.

\begin{lem}\label{lem:nu-prime-close-Gamma} We have
\[\left\Vert \nu_{\mathcal{G}, \mathcal{S},u,v}'-n^{h-g-1}\Gamma_{\mathcal{C}, e}\right\Vert_\infty= O\left(\log n\cdot n^{h-g-(3/2)}\right).\]
\end{lem}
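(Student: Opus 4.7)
The plan is to expand $\kappa_H'(\mathcal{G_S}, G_0, j_1,\dots,j_g, u, v)$ as a sum over ``types'' of copies, take expectations term by term, and recognise the result as $n^{h-g-1} \Gamma_{\mathcal{C}, e}$. Each copy contributing to $\kappa_H'$ is determined by a tuple $(V'_0, x, y_1,\dots,y_{g-1}, v_1,\dots,v_{g-1})$ together with an injection $\psi \colon V(H) \sm V'_0 \to \operatorname{U}(\mathcal{G}) \sm \{u\}$, where $V'_0 \su V(H)$ has size $g+1$, $x \in V'_0$ is mapped to $u$, the unique $y_g \in V'_0$ with $xy_g \in E(H)$ is mapped to $v$, each $y_i \in V'_0 \sm \{x, y_g\}$ is mapped to a chosen colour-$i$ vertex $v_i$ of $\mathcal{G}$, and $\psi$ extends this to the remaining vertices. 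This data, together with $(j_1,\dots,j_g)$, naturally defines a $(j_1,\dots,j_g)$-coloured partial copy $\phi_0 \colon H[V'_0] \to \mathcal{C}$ containing $e$; crucially, since $u \in \operatorname{U^*}(\mathcal{G})$, the edges of $H[V'_0]$ from $x$ to $y_i$ (for $i < g$) can always be realised by the shade-$j_i$ edge from the uncoloured vertex of $\mathcal{C}$ to $v_i$. Conversely, every $(j_1,\dots,j_g)$-coloured partial copy of $H$ in $\mathcal{C}$ containing $e$ arises this way, yielding a bijection between ``types'' and such partial copies.

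For a fixed type, the expected number of completions $\psi$ factors as $p^{e(H[V(H) \sm V'_0]) + k_x} \cdot N$, where $k_x$ is the number of edges of $H$ from $x$ to $V(H) \sm V'_0$, the first factor accounts for the $G_0$-edges that must be present, and $N$ is the number of injective maps $\psi \colon V(H) \sm V'_0 \to \operatorname{U}(\mathcal{G}) \sm \{u\}$ satisfying the shade constraints imposed by $\mathcal{G_S}$. Writing $T_z$ for the intersection over $i$ with $y_i z \in E(H)$ of the $\operatorname{U}(\mathcal{G})$-neighbourhood of $v_i$ in shade $j_i$ of colour $i$ (taking $S_v$ when $i = g$), the $p$-generality of $\mathcal{G_S}$ together with \cref{lem:sets-general-position-intersection} gives $|T_z| = p^{d_z} n + O(n^{1/2} \log n)$, where $d_z = |\{i : y_i z \in E(H)\}|$. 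Hence $\prod_{z} |T_z| = p^{k_c} n^{h-g-1} + O(n^{h-g-(3/2)} \log n)$, where $k_c = \sum_z d_z$ is the number of edges of $H$ from $\{y_1,\dots,y_g\}$ to $V(H) \sm V'_0$; subtracting the $O(n^{h-g-2})$ non-injective tuples and using the identity $e(H[V(H)\sm V'_0]) + k_x + k_c = e(H) - e(H[V'_0])$, the expected contribution of a single type becomes $p^{e(H) - e(H[V'_0])} n^{h-g-1} + O(n^{h-g-(3/2)} \log n)$.

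Summing over the $O(1)$-many types and invoking the bijection, the leading terms assemble into $n^{h-g-1} \Gamma_{\mathcal{C}, e}(j_1,\dots,j_g)$, since the weight of $\phi_0$ is by definition $p^{e(H) - e_H(V'_0)}$; the aggregated error is $O(n^{h-g-(3/2)} \log n)$ and is uniform in $(j_1,\dots,j_g)$, which yields the conclusion in the sup-norm.

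The main obstacle is the careful verification of the bijection between types and $(j_1,\dots,j_g)$-coloured partial copies of $H$ in $\mathcal{C}$ containing $e$, in particular tracking the shades on the edges incident to the uncoloured vertex of $\mathcal{C}$ and ensuring that each edge in the image lies in the correct shade. The hypothesis $u \in \operatorname{U^*}(\mathcal{G})$ is used precisely here, to guarantee that no shade constraint on the edges from $u$ to the vertices of colours $1,\dots,g-1$ ever obstructs the correspondence. The remaining ingredients---the probability factorisation and the general-position estimate for $|T_z|$---are routine and contribute only lower-order error.
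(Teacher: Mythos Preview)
Your proof is correct and follows essentially the same approach as the paper: decompose $\nu'_{\mathcal{G},\mathcal{S},u,v}(j_1,\dots,j_g)$ as a sum over partial-copy ``types'' (what the paper calls the set $\Phi$), estimate each summand using the $p$-generality of $\mathcal{G_S}$ together with \cref{lem:sets-general-position-intersection}, and identify the leading sum with $\Gamma_{\mathcal{C},e}(j_1,\dots,j_g)$ via the bijection with $(j_1,\dots,j_g)$-coloured partial copies of $H$ in $\mathcal{C}$ containing $e$. One small slip in wording: $y_g$ is not in general ``the unique $y_g \in V'_0$ with $xy_g \in E(H)$'' (there may well be several neighbours of $x$ in $V'_0$), but rather the single remaining element of $V'_0 \sm \{x, y_1, \dots, y_{g-1}\}$, which does satisfy $xy_g \in E(H)$ because the copy uses the edge $uv$; since your tuple already records $y_1,\dots,y_{g-1}$ this does not affect the argument.
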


\begin{proof}
We need to show that for every $(j_1,\dots,j_g)\in [t_1]\times \dots\times[t_{g-1}]\times [1]$ we have
\begin{equation}
\left\vert \nu_{\mathcal{G}, \mathcal{S},u,v}'(j_1,\dots,j_g)-n^{h-g-1}\Gamma_{\mathcal{C}, e}(j_1,\dots,j_g)\right\vert= O\left(\log n\cdot n^{h-g-(3/2)}\right).\label{eq:nu-Gamma}
\end{equation}
So let us fix some $(j_1,\dots,j_g)\in [t_1]\times \dots\times[t_{g-1}]\times [1]$. For every $i=1,\dots,g$ let us refer to shade $j_i$ of colour $i$ as the \emph{desired} shade of colour $i$.

Recall that $\nu_{\mathcal{G}, \mathcal{S},u,v}'(j_1,\dots,j_g)=\E_{G_0}[\kappa_H'(\mathcal{G_S}, G_0, j_1,\dots,j_g, u,v)]$ is the expected number of labelled copies of $H$ in the graph $\mathcal{G_S}(G_0, j_1,\dots,j_g)+\lbrace uv\rbrace$ which use exactly one vertex of each of the $g$ colours and use the edge $uv$. We organise these copies by how they interact with the coloured vertices, as follows.

Consider any graph homomorphism $\phi: H[V_\phi]\to \mathcal{G_S}(G_0, j_1,\dots,j_g)+\lbrace uv\rbrace$, such that $V_\phi$ is a subset of $g+1$ vertices of $H$ and such that the image of $\phi$ contains exactly one vertex of each of the $g$ colours and the edge $uv$ (let $\Phi$ be the set of all such homomorphisms). Let $E_\phi$ be the expected number of labelled copies of $H$ in the graph $\mathcal{G_S}(G_0, j_1,\dots,j_g)+\lbrace uv\rbrace$ that extend $\phi$ by mapping the vertices in $V\sm V_\phi$ into $\operatorname{U}(\mathcal{G})\sm \lbrace u\rbrace$ (where the expectation is taken over the random choice of $G_0\sim\GG(\operatorname{U}(\mathcal{G}), p)$). Then we have
\begin{equation}\label{eq:sum-E-phi}
\nu_{\mathcal{G}, \mathcal{S},u,v}'(j_1,\dots,j_g)=\sum_{\phi\in \Phi} E_\phi.
\end{equation}
Now, since $\mathcal {G_S}$ is $p$-general, we can estimate the $E_\phi$ as follows.

\begin{claim}\label{claim:E-phi}
For each $\phi\in \Phi$ as above, we have
\[\left| E_\phi-p^{e(H)-e_H(V_\phi)}n^{h-g-1}\right|\leq O\left(n^{h-g-(3/2)}\log n\right).\]
\end{claim}

\begin{proof}
Let $V_{\text{col}}$ be the set of those vertices $x\in V_\phi$ such that $\phi(x)$ is a coloured vertex (i.e. $\phi(x)\ne u$). So, $\vert V_{\text{col}}\vert=g$. Recall that $E_\phi$ is the expected number of labelled copies of $H$ in the graph $\mathcal{G_S}(G_0, j_1,\dots,j_g)+\lbrace uv\rbrace$ that extend $\phi$ by mapping the vertices in $V\sm V_\phi$ into $\operatorname{U}(\mathcal{G})\sm \lbrace u\rbrace$. For every vertex $y\in V(H)\sm V_\phi$, let $M_\phi(y)$ be the set of possible choices for the image of $y$ that are compatible with the map $\phi$ on $H[V_\phi]$. More precisely, $M_\phi(y)$ is the set of vertices $w\in\operatorname{U}(\mathcal{G})\sm \lbrace u\rbrace$ such that for every neighbour $x\in V_{\text{col}}$ of $y$, the vertex $w$ is connected to $\phi(x)$ in the desired shade of the colour of the vertex $\phi(x)$. Let $N$ be the number of $(h-g-1)$-tuples in $\prod_{y\in V(H)\sm V_\phi}M_\phi(y)$ whose vertices are distinct (that is, the number of ways to choose a distinct vertex from each $M_\phi(y)$). Then $N=\prod_{y\in V(H)\sm V_\phi}|M_\phi(y)|+O(n^{h-g-2})$, and
\begin{equation}\label{eq:expectation-E-phi}
E_\phi=p^{e_H(V(H)\sm V_{\text{col}})}\cdot N=p^{e_H(V(H)\sm V_{\text{col}})}\cdot \prod_{y\in V(H)\sm V_\phi}|M_\phi(y)|+O(n^{h-g-2}).
\end{equation}
Indeed, if we choose possible images for all the vertices $y\in V(H)\sm V_\phi$ (there are $N$ such choices), then each of the $e_H(V(H)\sm V_{\text{col}})$ edges of $H$ inside $V(H)\sm V_{\text{col}}$ needs to be mapped to an edge of $G_0\sim\GG(\vert \operatorname{U}(\mathcal{G})\vert,p)$ and the probability for this to happen is $p^{e_H(V(H)\sm V_{\text{col}})}$.

Now, the sizes of the $M_\phi(y)$ are dictated by our assumption that $\mathcal {G_S}$ is $p$-general. Fix a vertex $y\in V(H)\sm V_\phi$ and let $x_1,\dots,x_k$ be its neighbours in $V_{\text{col}}$. Let $N_1,\dots, N_k\su \operatorname{U}(\mathcal{G})$ be the neighbourhoods of the vertices $\phi(x_1),\dots,\phi(x_k)$ in $\operatorname{U}(\mathcal{G})$ in the desired shades of the colours of $\phi(x_1),\dots,\phi(x_k)$, respectively. Then the set of possible choices for the image of $y$ is $N_1\cap\dots\cap N_k\sm \lbrace u\rbrace$. So, $|M_\phi(y)|$ differs from $\left|N_1\cap\dots\cap N_k\right|$ by at most 1. Now, if we consider all the neighbourhoods  in $\operatorname{U}(\mathcal{G})$ of all vertices of colours $1,\dots,g$ in $\mathcal{G_S}$ in all the respective shades of these colours, then these are $a_1t_1+\dots+a_{g-1}t_{g-1}+a_g$ subsets of the set $\operatorname{U}(\mathcal{G})$ in $(p,3^g)$-general position (this is because $\mathcal{G_S}$ is by assumption a $p$-general colour system). Thus, by \cref{lem:sets-general-position-intersection} we have
\[\left|\left|N_1\cap\dots\cap N_k\right|-p^k\vert \operatorname{U}(\mathcal{G})\vert\right|= O\left(\vert \operatorname{U}(\mathcal{G})\vert^{1/2}\log \vert \operatorname{U}(\mathcal{G})\vert\right).\]
Recall that $\vert \operatorname{U}(\mathcal{G})\vert=n-a_1-\dots-a_g=n-O(1)$, and that $k$ was the number of neighbours in of $y$ in $V_{\text{col}}$, so
\[\left||M_\phi(y)|-p^{e_H(y, V_{\text{col}})}n\right|=\left||M_\phi(y)|-p^{k}n\right|\leq \left|\left|N_1\cap\dots\cap N_k\right|-p^kn\right|+1=O\left(n^{1/2}\log n\right),\]
or equivalently
\[|M_\phi(y)|=\left(1+O\left(\log n/\sqrt n\right)\right)p^{e_H(y, V_{\text{col}})}n.\]
Finally, observe that $e_H(V(H)\sm V_{\text{col}})$, plus the sum of all the $e_H(y, V_{\text{col}})$, for $y\in V(H)\sm V_\phi$, is equal to $e(H)-e_H(V_\phi)$. Indeed, since $V_\phi$ and $V(H)\sm V_{\text{col}}$ intersect in only one vertex $z$, every edge of $H$ is either between two vertices of $V_\phi$, two vertices of $V(H)\sm V_{\text{col}}$, or between a vertex of $V_\phi\sm \{z\}=V_{\text{col}}$ and a vertex of $(V(H)\sm V_{\text{col}})\sm \{z\}=V(H)\sm V_{\phi}$. From \cref{eq:expectation-E-phi} we therefore conclude that
\begin{multline*}
E_{\phi}=p^{e_H(V(H)\sm V_{\text{col}})}\cdot \prod_{y\in V(H)\sm V_\phi}\left(\left(1+O\left(\log n/\sqrt n\right)\right)p^{e_H(y, V_{\text{col}})}n\right)+O(n^{h-g-2})\\
=\left(1+O\left(\log n/\sqrt n\right)\right)p^{e(H)-e_H(V_\phi)}n^{h-(g+1)}=p^{e(H)-e_H(V_\phi)}n^{h-g-1}+O(n^{h-g-(3/2)}\log n),
\end{multline*}
which is equivalent to the desired bound.
\end{proof}

Now, the sum in \cref{eq:sum-E-phi} is only over $|\Phi|\le h^{g+1}a_1\dots a_g=O(1)$ choices of $\phi$, so \cref{claim:E-phi} implies that
\begin{equation*}%\label{eq:sum-E-phi-2}
\left\vert\nu_{\mathcal{G}, \mathcal{S},u,v}'(j_1,\dots,j_g)-\sum_{\phi\in \Phi} \left(p^{e(H)-e_H(V_\phi)}n^{h-g-1}\right)\right\vert=O\left( n^{h-g-(3/2)}\log n\right).
\end{equation*}
Finally, recall that $\Phi$ is the set of homomorphisms of the form $\phi: H[V_\phi]\to \mathcal{G_S}(G_0, j_1,\dots,j_g)+\lbrace uv\rbrace$, with $|V_\phi|=g+1$, such that the image of $\phi$ contains exactly one vertex of each of the $g$ colours and the edge $uv$. The core $\mathcal{C}$ of the restricted colour system $\mathcal{G}$ was defined (in \cref{def-core-restr-c-s}) in such a way that there is a bijective correspondence between $\Phi$ and the set of $(j_1,\dots, j_g)$-coloured partial copies $\phi^*$ of $H$ in $\mathcal{C}$ which contain the edge $e$. Recall that $\Gamma_{\mathcal C,e}(j_1,\dots,j_g)$ was defined (in \cref{defn:gamma}) as the sum of the weights of all $(j_1,\dots, j_g)$-coloured partial copies whose image contains $e$, and the weight of a partial copy $\phi^*:H[V']\to\mathcal C$ was defined to be $p^{e(H)-e_H(V')}$. So,
$$\Gamma_{\mathcal C,e}(j_1,\dots,j_g)=\sum_{\phi\in \Phi} p^{e(H)-e_H(V_\phi)}.$$
The desired bound \cref{eq:nu-Gamma} follows.
\end{proof}

\subsection{Proof of \texorpdfstring{\cref{lem:psi-u-v-close-Gamma}}{Lemma~\ref{lem:psi-u-v-close-Gamma}}}
\label{subsec:kappa}
In this subsection we deduce \cref{lem:psi-u-v-close-Gamma} from \cref{lem-nu-close-Gamma}. Throughout the subsection, fix integers $1\leq g\leq h-1$ and $a_1,\dots,a_{g-1}, t_1,\dots,t_{g-1}\geq 1$ (in order to prove \cref{lem:psi-u-v-close-Gamma} for these values). For all asymptotic notation, these fixed values are treated as constants.

Let $\mathcal{G}$ be a weakly $p$-general colour system of order $n$ with parameters $(g-1, a_1,\dots,a_{g-1}, t_1,\dots,t_{g-1})$, let $\mathcal{C}$ be the extended core of $\mathcal{G}$, and consider some $e\in E^g(\mathcal C)$.
Fix distinct vertices $u\in U_*$ and $v\in U_e$. We need to show that for a random graph $G_0\sim\GG(\operatorname{U}(\mathcal{G}), p)$, with probability $1-n^{-\omega(1)}$ we have
\begin{equation}\label{eqn:kappa-close-Gamma-to-prove}
\left\vert\kappa_H(\mathcal{G}, G_0, j_1,\dots,j_{g-1}, u,v)-n^{h-g-1}\Gamma_{\mathcal{C},e}(j_1,\dots,j_{g-1},1)\right\vert=O\left(\log n\cdot n^{h-g-(3/2)}\right).
\end{equation}
for all $(j_1,\dots,j_{g-1})\in [t_1]\times \dots\times [t_{g-1}]$. For the rest of the proof fix some $(j_1,\dots,j_{g-1})\in [t_1]\times \dots\times [t_{g-1}]$: we will show that \cref{eqn:kappa-close-Gamma-to-prove} holds with probability $1-n^{-\omega(1)}$ (then the desired result will follow, taking a union bound over all choices of $(j_1,\dots,j_{g-1})$).

By \cref{lem:U-e-big}, each of the $2^{a_1t_1+\dots+a_{g-1}t_{g-1}}$ disjoint sets $U_f$ has size $\Omega(n)\ge 2$. Let $Z$ be a set containing one representative from each $U_f$, taking $v\in U_e$, but taking some vertex other than $u$ in $U_*$. If we imagine that the vertices of $Z$ are coloured with colour $g$, then, by our choice of $Z$, the coloured vertices of $\mathcal{G}$ together with $Z$ form a colour system which looks the same as the extended core $\mathcal{C}$ of $\mathcal{G}$ except that the uncoloured vertex of $\mathcal{C}$ is missing.

Now, for the rest of the proof, we condition on some outcome of the induced subgraph $G_0[Z]$ on the vertices in $Z$. To apply \cref{lem-nu-close-Gamma}, we define a restricted colour system $\mathcal{G}'$ of order $n$ with parameters $(g, a_1,\dots,a_{g-1}, 2^{a_1t_1+\dots+a_{g-1}t_{g-1}},t_1,\dots,t_{g-1})$ by starting with $\mathcal G$, colouring the vertices in $Z$ with colour $g$, and including all edges of our conditioned outcome of $G_0[Z]$ in a single shade of colour $g$. By construction, the core $\mathcal{C}'$ of the restricted colour system $\mathcal{G}'$ is almost isomorphic to $\mathcal{C}$; the only difference is that $\mathcal{C}'$ has the edges of $G_0[Z]$ between the vertices of colour $g$, whereas $\mathcal{C}$ has no edges of colour $g$. (To be precise, there is a colour/shade-preserving graph isomorphism between $\mathcal{C}$ and $\mathcal{C}'-E(G_0[Z])$).

Since we chose $Z$ such that $v\in Z$ and $u\not\in Z$, we have $u\in \operatorname{U}(\mathcal{G}')$ (that is, $u$ is uncoloured in $\mathcal G'$), and $v$ has colour $g$ in $\mathcal{G}'$. Recall that $u\in U_*$, meaning that $u$ is connected to all vertices of colours $1,\dots,g-1$ in all possible shades of these colours. This implies $u\in \operatorname{U^*}(\mathcal{G}')$. Now, let $e'\in E^g(\mathcal C')$ be the unique edge in the core $\mathcal{C}'$ between $v$ and the uncoloured vertex of $\mathcal{C}'$. As $v\in U_e$, this edge $e'$ corresponds to the edge $e$ in $\mathcal C$ under the isomorphism in the previous paragraph. Note that the functions $\Gamma_{\mathcal{C},e}$ do not actually depend on the edges between the vertices of colour $g$ in $\mathcal{C}$ (since the partial copies of $H$ in $\mathcal{C}$ use exactly one vertex of colour $g$). Thus, we have $\Gamma_{\mathcal{C},e}=\Gamma_{\mathcal{C}',e'}$.

We have conditioned on an outcome of $G_0[Z]$. For each $z\in Z$, let $S_z=N_{G_0}(z)\cap\operatorname{U}(\mathcal{G}')$ be the (random) set of neighbours of $z$ in $\operatorname{U}(\mathcal{G}')=\operatorname{U}(\mathcal{G})\sm Z$, and let $G_0^-=G_0[\operatorname{U}(\mathcal{G}')]\sim \GG(\operatorname{U}(\mathcal{G}'), p)$ be the induced subgraph on $\operatorname{U}(\mathcal{G}')$. Then, with $\mathcal{S}=(S_z)_{z\in Z}$ we have $\mathcal{G}( G_0, j_1,\dots,j_{g-1})=\mathcal{G}'_{\mathcal{S}}(G_0^-, j_1,\dots,j_g,1)$ in the notation of \cref{defn:extension-restricted-colour-system}. It follows that $\kappa_H(\mathcal{G}, G_0, j_1,\dots,j_{g-1}, u,v)=\kappa_H(\mathcal{G}'_{\mathcal{S}}, G_0^-, j_1,\dots,j_{g-1}, 1,u,v)$, because every labelled copy of $H$ using the edge $uv$ as well as at least one vertex of each of the colours $1,\dots,g-1$ automatically also uses a vertex of colour $g$ (namely $v$). Thus, \cref{eqn:kappa-close-Gamma-to-prove} is equivalent to the inequality
\begin{equation}\label{eqn:kappa-close-Gamma-to-prove-2}
\left\vert\kappa_H(\mathcal{G}'_{\mathcal{S}}, G_0^-, j_1,\dots,j_{g-1}, 1,u,v)-n^{h-g-1}\Gamma_{\mathcal{C}',e'}(j_1,\dots,j_{g-1},1)\right\vert= O\left( \log n \cdot n^{h-g-(3/2)}\right),
\end{equation}
where $G_0^-\sim\GG(\operatorname{U}(\mathcal{G}'), p)$, and $\mathcal{S}=(S_z)_{z\in Z}$ is a collection of random sets with respect to the restricted colour system $\mathcal{G}'$ as in \cref{defn:extension-restricted-colour-system}.

Note that $\mathcal G'$ is essentially weakly $p$-general, due to the way it was defined in terms of $\mathcal G$. So, by \cref{lem:extension-p-general}, $\mathcal{G}'_{\mathcal{S}}$ is $p$-general with probability at least $1-n^{-\omega(1)}$, and if $\mathcal{G}'_{\mathcal{S}}$ is $p$-general then by \cref{lem-nu-close-Gamma} we have $\Gamma_{\mathcal{C}',e'}(j_1,\dots,j_{g-1},1)n^{h-g-1}=\nu_{\mathcal{G}', \mathcal{S},u,v}(j_1,\dots,j_{g-1},1)+O(\log n\cdot n^{h-g-(3/2)})$. So, to conclude the proof of \cref{lem:psi-u-v-close-Gamma}, it suffices to prove the following claim.

\begin{claim}\label{lem:kappa-close-nu}
With probability $1-n^{-\omega(1)}$ we have
\begin{equation}\label{eqn:ineq-kappa-nu-far}
\left\vert\kappa_H(\mathcal{G}'_{\mathcal{S}}, G_0^-, j_1,\dots,j_{g-1}, 1,u,v)-\nu_{\mathcal{G}', \mathcal{S},u,v}(j_1,\dots,j_{g-1},1)\right\vert\le \log n\cdot n^{h-g-(3/2)}.
\end{equation}
\end{claim}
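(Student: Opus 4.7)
The natural approach is a standard martingale concentration argument in the spirit of \cref{lemma-graph-good,lemma-graph-good-2}. By definition, $\nu_{\mathcal{G}', \mathcal{S}, u, v}(j_1,\dots,j_{g-1},1)$ is the conditional expectation of $\kappa_H(\mathcal{G}'_{\mathcal{S}}, G_0^-, j_1,\dots,j_{g-1}, 1, u,v)$ with respect to the randomness of $G_0^-\sim\GG(\operatorname{U}(\mathcal{G}'), p)$, given $\mathcal{S}$. I will therefore condition on an arbitrary outcome of $\mathcal{S}$ and apply the Azuma--Hoeffding inequality to the edge-exposure Doob martingale for $G_0^-$; since the resulting bound will be uniform in $\mathcal{S}$, averaging over $\mathcal{S}$ immediately yields the claim.

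The key input is a Lipschitz estimate for $\kappa_H$ under a single edge-flip in $G_0^-$. Any labelled copy counted by $\kappa_H$ must use the edge $uv$, must contain $v$ as the unique vertex of colour $g$, and must contain at least one vertex of each of the colours $1,\dots,g-1$. Fix an edge $\{x,y\}\su \operatorname{U}(\mathcal{G}')$. If $u\notin\{x,y\}$ then the copies affected by flipping $\{x,y\}$ must contain the distinct vertices $u,v,x,y$ together with at least $g-1$ additional coloured vertices, so there are at most $O(n^{h-g-3})$ of them. If instead $u\in\{x,y\}$ (say $x=u$), the copy needs only the additional uncoloured vertex $y$ beyond $u$, giving the weaker count $O(n^{h-g-2})$. (In the degenerate cases $h\le g+2$ the flip has no effect at all, which is consistent with interpreting these bounds as zero when the exponent is negative.) Summing squares over the $O(n^2)$ possible edges of $G_0^-$, with $O(n)$ of them incident to $u$, the total is
\[
O(n^2)\cdot (n^{h-g-3})^2+O(n)\cdot (n^{h-g-2})^2=O(n^{2h-2g-3}).
\]
The Azuma--Hoeffding inequality then gives
\[
\Pr\left(\vphantom{\sum}\bigl|\kappa_H(\mathcal{G}'_{\mathcal{S}}, G_0^-, j_1,\dots,j_{g-1}, 1,u,v)-\nu_{\mathcal{G}', \mathcal{S},u,v}(j_1,\dots,j_{g-1},1)\bigr|>\log n\cdot n^{h-g-(3/2)}\,\bigm|\,\mathcal{S}\right)\le \exp\!\left(-\Omega((\log n)^2)\right)=n^{-\omega(1)},
\]
uniformly in $\mathcal{S}$, which is exactly what is required.

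The only point of care is that the edges incident to $u$ must be handled separately from the remaining edges of $G_0^-$. If one instead applied the uniform bound $O(n^{h-g-2})$ to all $O(n^2)$ edges, the sum of squared Lipschitz constants would grow to $O(n^{2h-2g-2})$, which is too large by a factor of $n$ to absorb the target deviation $\log n\cdot n^{h-g-(3/2)}$. Isolating the $O(n)$ edges incident to $u$ (which carry the inflated Lipschitz constant because the copies no longer have to contain two additional uncoloured vertices) saves exactly this factor and delivers concentration at the correct scale; everything else is a routine application of Azuma--Hoeffding.
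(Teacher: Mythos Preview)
Your proof is correct and follows the same overall strategy as the paper: condition on $\mathcal{S}$, identify $\nu_{\mathcal{G}',\mathcal{S},u,v}$ as the conditional expectation of $\kappa_H$, and apply Azuma--Hoeffding to a Doob martingale for $G_0^-$. The one genuine difference is the choice of martingale. The paper uses the \emph{vertex-exposure} martingale with the vertices of $\operatorname{U}(\mathcal{G}')$ ordered so that $u$ comes last; since by that final step all edges incident to $u$ have already been revealed, every nontrivial step exposes a vertex $w\ne u$ and has Lipschitz constant $O(n^{h-g-2})$ (the copy must contain $u$, $v$, $w$, and a vertex of each of the colours $1,\dots,g-1$). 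This yields $\sum c_i^2 = O(n)\cdot O(n^{2(h-g-2)}) = O(n^{2h-2g-3})$ directly, with no case analysis. Your edge-exposure argument reaches the same sum of squares but must split off the $O(n)$ edges incident to $u$, exactly as you note in your final paragraph; the paper's ordering trick is what lets it avoid this split. (Minor quibble: the copies need not contain $v$ as the \emph{unique} vertex of colour $g$, only as \emph{a} vertex of colour $g$; this does not affect your count.)
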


\begin{proof}
Condition on a fixed outcome of $\mathcal S$ (so that only $G_0^-$ remains random). Recall that by \cref{defn:nu} we have $\nu_{\mathcal{G}', \mathcal{S},u,v}(j_1,\dots,j_{g-1},1)=\E_{G_0^-}[\kappa_H(\mathcal{G_S'}, G_0^-, j_1,\dots,j_{g-1},1, u,v)]$. Consider the vertex-exposure martingale for $G_0^-$ (with respect to $\kappa_H(\mathcal{G}'_{\mathcal{S}}, G_0^-, j_1,\dots,j_{g-1}, 1,u,v)$), where we fix an ordering of $\operatorname{U}(\mathcal{G}')$ (ending with $u$) and at each step we consider the next vertex in our ordering and expose all the edges of $G_0^-$ incident to that vertex which have not yet been exposed. Changing the status of edges adjacent to a single vertex in $\operatorname{U}(\mathcal{G}')\sm \lbrace u\rbrace$ changes the value of $\kappa_H(\mathcal{G}'_{\mathcal{S}}, G_0^-, j_1,\dots,j_{g-1}, 1,u,v)$ by at most $h^{g+2} \cdot a_1\dotsm a_{g-1}\cdot 2^{a_1t_1+\dots+a_{g-1}t_{g-1}}\cdot n^{h-g-2}=O(n^{h-g-2})$. This is due to the fact that there can be at most $h^{g+2} \cdot a_1\dotsm a_{g-1}\cdot 2^{a_1t_1+\dots+a_{g-1}t_{g-1}}\cdot n^{h-g-2}$ different labelled copies of $H$ in the $n$-vertex graph $\mathcal{G_S}(G_0^-, j_1,\dots,j_g)+\lbrace uv\rbrace$ which use the edge $uv$ as well as at least one vertex of each of the $g$ colours and the exposed vertex (as both $u$ and the exposed vertex are uncoloured). Thus, by the Azuma--Hoeffding inequality the probability that \cref{eqn:ineq-kappa-nu-far} fails to hold is at most
\[\exp\left(-\Omega\left(\frac{(\log n)^{2}\cdot n^{2(h-g-3/2)}}{n \cdot n^{2(h-g-2)}}\right)\right)= \exp\left(-\Omega\left( (\log n)^{2}\right)\right)= n^{-\omega(1)},\]
as desired.
\end{proof}

\subsection{Putting everything together}
\label{sec:final-pf}

\begin{proof}[Proof of \cref{lem:rough-scale}]
Recall that the statement of \cref{lem:rough-scale} is for fixed integers $1\leq g\leq h-1$ and $a_1,\dots,a_{g-1},t_1,\dots,t_{g-1}\geq 1$ (which were fixed throughout \cref{sect:cor-implies-thm}), and recall that $T=t_1\dotsm t_{g-1}$. Let $\mathcal{G}$ and $\lambda$ be as in the statement of the lemma, and let $\mathcal{C}$ be the extended core of $\mathcal{G}$ (which is a complete core). Let $N=\binom{|\operatorname{U}(\mathcal{G})|}{2}=\Theta(n^2)$, so that the random choice of $G_0\sim\GG(\operatorname{U}(\mathcal{G}), p)$ can be encoded by a Bernoulli sequence $\x\sim \Ber(p)^{N}$, with one random bit for each of the $N$ possible edges of $G_0$. Abusing notation slightly, we identify the integers $1,\dots,N$ with pairs of vertices in $\operatorname{U}(\mathcal{G})$, so that we may write $\xi_{\{u,v\}}$ to indicate the random bit that encodes the presence of the edge $\{u,v\}$.

Now, abusing notation, we index the coordinates of $\RR^T$ by tuples in $[t_1]\times\dots,\times[t_{g-1}]$ (so that we may talk about the $(j_1,\dots,j_{g-1})$-coordinate of a vector in $\RR^T$). Let $\boldsymbol{f}:\{0,1\}^N\to \RR^T$ be the vector-valued function defined such that, for $\x\in \{0,1\}^N$ corresponding to a graph $G_0$, the $(j_1,\dots,j_{g-1})$-coordinate of $\boldsymbol f(\x)$ is $\psi_H(\mathcal{G}, G_0, j_1,\dots,j_{g-1})$. With this definition, and the notation of \cref{thm:rough-halasz}, the random vector $\Delta_{\{u,v\}} \boldsymbol{f}(\boldsymbol \xi)$ corresponds to the function $\kappa_H(\mathcal G,G_0,\cdot,1,u,v)$.

The plan is to now apply \cref{thm:rough-halasz} with $d=T$ and $m=2^{a_1t_1+\dots+a_{g-1}t_{g-1}}$ and with the $\Gamma_{\mathcal C,e}$ taking the role of the vectors $\boldsymbol{v}_1,\dots,\boldsymbol{v}_m$. For each edge $e\in E^g(\mathcal C)$, let $\boldsymbol{\gamma}_e\in \RR^T$ be the vector corresponding to the function $\Gamma_{\mathcal{C},e}(\cdot,1)$, and let $\boldsymbol{x}\in \RR^T$ be the vector corresponding to the function $\lambda(\cdot,1)$. By \cref{lem:U-e-big}, there is some $\varepsilon=\Omega(1)$ such that for each edge $e\in E^g(\mathcal C)$, there are $\Omega(n^2)\ge \varepsilon N$ pairs of vertices $\{u,v\}$ with $u\in U_*,v\in U_e$. Let $I_e$ be the set of these pairs $\{u,v\}$ (observe that all the $I_e$ are disjoint). By \cref{lem:psi-u-v-close-Gamma}, for each $\{u,v\}\in I_e$ we have $\Pr\left(\left\Vert \Delta_{\{u,v\}}\boldsymbol{f}\left(\x\right)-s\boldsymbol{\gamma}_{e}\right\Vert_{\infty}\ge r\right)\le n^{-\omega(1)}$ for $s=n^{h-g-1}$ and some $r=\Theta(n^{h-g-3/2}\log n)$. Note that $r\sqrt {N\log N}\ge s$, and by \cref{lem:the-Gamma-e-span}, the vectors $\boldsymbol{\gamma}_{e}$ span $\RR^T$. We can now apply \cref{thm:rough-halasz} to obtain
\[
\Pr\left(\left\Vert \boldsymbol{f}\left(\x\right)-\boldsymbol{x}\right\Vert _{\infty}<r\sqrt{N\log N}\right) = O\left(\left(\frac{r\sqrt{\log N}}{s}\right)^{T}\right)=O\left((\log n)^{3T/2}n^{-T/2}\right)\le n^{-T/2+o(1)}.
\]
(The implied constants in the above asymptotic notation may \emph{a priori} depend on $\mathcal C$, but note that there are only finitely many possibilities for a core with parameters $(g,a_1,\dots,2^{a_1t_1+\dots+a_{g-1}t_{g-1}},t_1,\dots,t_{g-1})$). Finally, to conclude the proof we recall that $\left\|\psi_H(\mathcal{G}, G_0, \cdot)-\lambda\right\|_\infty=\left\Vert \boldsymbol{f}\left(\x\right)-\boldsymbol{x}\right\Vert _{\infty}$ and observe that $r\sqrt{N\log N}=\Theta(n^{h-g-(1/2)}(\log n)^{3/2})\ge n^{h-g-(1/2)}\log n$ for large $n$.
\end{proof}

\begin{proof}[Proof of \cref{lem:medium-scale}]
This proof is very similar to the proof of \cref{lem:rough-scale}. Recall that the statement of \cref{lem:medium-scale} is for fixed integers $1\leq g\leq h-1$ and $a_1,\dots,a_{g},t_1,\dots,t_{g-1}\geq 1$ (which were fixed throughout \cref{sect:thm-implies-cor}), and recall that $T=t_1\dotsm t_{g-1}$. Let $\mathcal{G}$ and $\lambda$ be as in the statement of the lemma, and let $\mathcal{C}$ be the core of $\mathcal{G}$. Let $N=a_g\cdot |\operatorname{U}(\mathcal{G})|=\Theta(n)$, so that a random choice of $\mathcal{S}$ as in \cref{defn:extension-restricted-colour-system} can be encoded by a Bernoulli sequence $\x\in \Ber(p)^{N}$, with one random bit for each potential element in each $S_v\in \mathcal S$. Abusing notation slightly, we identify $1,\dots,N$ with ordered pairs of vertices: for $u\in \operatorname U(\mathcal G)$ and a vertex $v$ of colour $g$ we write $\xi_{(u,v)}$ for the random bit that encodes the presence of $u$ in $S_v$.

Let $\boldsymbol{f}:\{0,1\}^N\to \RR^T$ be the vector-valued function (with coordinates indexed by $[t_1]\times \dots\times [t_{g-1}]$) defined such that, for $\x\in \{0,1\}^N$ corresponding to an outcome of $\mathcal{S}$, the $(j_1,\dots,j_{g-1})$-coordinate of $\boldsymbol f(\x)$ is $\mu_{\mathcal G,\mathcal S}(j_1,\dots,j_{g-1},1)$. Then, $\Delta_{(u,v)} \boldsymbol{f}(\boldsymbol \xi)$ corresponds to the function $\nu_{
\mathcal G,\mathcal S,u,v}(\cdot,1)$. For each $e\in E^g(\mathcal C)$, let $\boldsymbol{\gamma}_e\in \RR^T$ be the vector corresponding to $\Gamma_{\mathcal{C},e}(\cdot,1)$, and let $\boldsymbol{x}\in \RR^T$ be the vector corresponding to $\lambda(\cdot,1)$.

By \cref{lem:U-star-big}, there is some $\varepsilon=\Omega(1)$ such that $|\operatorname{U^*}(\mathcal{G})|\ge \varepsilon n$. For each edge $e\in E^g(\mathcal C)$ between the uncoloured vertex of $\mathcal C$ and some vertex $v$ of colour $g$, let $I_e=\operatorname{U^*}(\mathcal{G})\times \{v\}$. By \cref{lem:extension-p-general} the colour system $\mathcal{G_S}$ is $p$-general with probability $1-n^{-\omega(1)}$, in which case, by \cref{lem-nu-close-Gamma}, for each $(u,v)\in I_e$ we have $\left\Vert \Delta_{(u,v)}\boldsymbol{f}\left(\x\right)-s\boldsymbol{\gamma}_{e}\right\Vert_{\infty}\le r$ for $s=n^{h-g-1}$ and some $r=\Theta(n^{h-g-(3/2)}\log n)$. Also, by \cref{lem:the-Gamma-e-span}, the vectors $\boldsymbol{\gamma}_{e}$ span $\RR^T$.

We can now apply \cref{thm:rough-halasz} to obtain $\Pr\left(\left\Vert \boldsymbol{f}\left(\x\right)-\boldsymbol{x}\right\Vert _{\infty}<r\sqrt{N\log N}\right)\le n^{-T/2+o(1)}.$ Finally, to conclude the proof we observe that $r\sqrt{N\log N}=\Theta(n^{h-g-1}(\log n)^{3/2})\ge n^{h-g-1}\log n$ for large $n$, and recall that $\left\Vert\mu_{\mathcal{G}, \mathcal{S}}-\lambda\right\Vert_\infty=\left\Vert \boldsymbol{f}\left(\x\right)-\boldsymbol{x}\right\Vert_{\infty}$.
\end{proof}

\section{Concluding remarks}
We have proved that for connected $H$ and constant $p\in (0,1)$, we have $\max_x\Pr(X_H=x)\le n^{1-v(H)+o(1)}$. There are several interesting directions for future research. Most obviously, \cref{conj:general} remains open: for connected $H$ we are still a factor of $n^{o(1)}$ away from an optimal bound, and for disconnected $H$ we do not even have a bound that improves as $H$ grows (the best general bound is $\Pr\left(X_{H}=x\right)\le\Pr\left(\left|X_{H}-x\right|\le n^{v\left(H\right)-2}\right)=O\left(1/n\right)$, as we mentioned in the introduction). It seems that the ideas in this paper are robust enough to give certain nontrivial bounds (in terms of the size of the largest component of $H$) even in the disconnected case, but we have not explored this further.

For certain graphs $H$, a possible route to a proof of \cref{conj:general} might be via a local central limit theorem, which one might hope to prove by extending the methods of Gilmer and Kopparty~\cite{GK16}, and Berkowitz~\cite{Ber16,Ber18}. Basically, this involves carefully estimating the characteristic function $\varphi(t)=\E e^{itX_H}$, using different arguments for different ranges of $t$. We remark that $\varphi(1/k)$ is small if the distribution of $X_H$ is not too biased mod $k$, which seems comparable to anticoncentration of $X_H$ at ``scale'' $k$. So, we wonder whether the ideas in this paper might be helpful for estimating $\varphi$: recall that our argument proceeds by breaking up $X_H$ into a sum of many random variables that fluctuate at different scales. However, we emphasise that local central limit theorems do not seem to be the right path to a proof of \cref{conj:general} in its full generality: for example, if
$H$ is a disjoint union of an edge and a 2-edge path, then the probability
that $X_{H}$ is odd is substantially different from the probability
that it is even (see \cite{DR16}), meaning that $X_H$ does not obey a local central limit theorem.

Also, let $X_H^{\mathrm{hom}}$ be the number of (possibly non-injective) homomorphisms from $H$ into $G\sim \GG(n,p)$. This random variable is very closely related to $X_H$, and we remark that with very minimal changes, one can modify our proof of \cref{thm:subgraph-counts} to prove the corresponding theorem for $X_H^{\mathrm{hom}}$, when $H$ is connected. Interestingly, the homomorphism-counting analogue of \cref{conj:general} fails dramatically in general: if $H$ is the disjoint union of two copies of a graph $H'$, then $X_H^{\mathrm{hom}}=(X_{H'}^{\mathrm{hom}})^2$, meaning that $X_H$ has the same point probabilities as $X_{H'}$. This means that any proof of \cref{conj:general} must be sensitive to the difference between subgraph counts and homomorphism counts.

It would also be interesting to consider the ``sparse'' regime where $p$ is allowed to decay with $n$. For example, it is known that if $H$ is \emph{strictly balanced} (see for example \cite[Section~3.2]{JLR00}) and $p=p(n)$ is such that $\Var X_H=o((\E X_H)^2)$, then $\Pr(X_H>0)=1-o(1)$. Could it be that under these conditions we also have that $\max_{x\in \ZZ}\Pr(X_H=x)=O(1/\sqrt{\Var X_H})$?

As mentioned in \cite{FKS1} it may also be interesting to study anticoncentration of the number of \emph{induced} copies
$X_{H}'$ of a subgraph $H$ in a random graph $\GG\left(n,p\right)$. (This question was also raised by Meka, Nguyen and Vu~\cite{MNV16}). The natural analogue of \cref{conj:general} is that for a fixed graph $H$ and fixed $p\in\left(0,1\right)$, we have
\[
\max_{x\in \NN}\Pr\left(X_{H}'=x\right)=O\left(\frac{1}{\sqrt{\Var\left(X_{H}'\right)}}\right).
\]
We remark that the behaviour of $\sqrt{\Var\left(X_{H}'\right)}$ is not entirely trivial: for most values of $p$ it has order $\Theta(n^{h-1})$, but when $p$ is exactly equal to the edge-density of $H$ it may have order $\Theta(n^{h-3/2})$ or $\Theta(n^{h-2})$ (see \cite[Theorem~6.42]{JLR00}).

Finally, it would be interesting to prove similar anticoncentration results in other combinatorial settings. One important example is random subsets of the integers (or other groups): for instance, what can we say about anticoncentration of the number of $k$-term arithmetic progressions in a random subset of $\{1,\dots,n\}$? Arithmetic configuration counts have been an interesting analogue to subgraph counts in a number of other settings, for example in the study of large deviations (both fall in the framework of \emph{nonlinear large deviations} initiated by Chatterjee and Dembo~\cite{CD16}; see for example \cite{HMS19} and the references therein). Another interesting direction of research would be to consider subgraph counts in random $k$-uniform hypergraphs, or for other random graph models (for example, the uniform distribution $\GG(n,m)$ on graphs with a fixed set of $n$ vertices and exactly $m$ edges).

\textit{Remark added in proof.} While this paper was under review, Sah and Sahwney~\cite{SS20} proved \cref{conj:general} for connected $H$ (via a local limit theorem) and disproved \cref{conj:general} in general.

\textit{Acknowledgements.} We thank the referees for their careful reading, and for many helpful comments.

%\bibliographystyle{amsplain_initials_nobysame}
%\bibliography{references}

\begin{thebibliography}{10}

\bibitem{alon-spencer}
N.~Alon and J.~H. Spencer, \emph{The probabilistic method}, fourth ed., Wiley
  Series in Discrete Mathematics and Optimization, John Wiley \& Sons, Inc.,
  Hoboken, NJ, 2016.

\bibitem{Aug20}
F.~Augeri, \emph{Nonlinear large deviation bounds with applications to traces of Wigner matrices and cycles counts in Erd\"os-Renyi graphs}, Ann. Probab., to appear.

\bibitem{Bar82}
A.~D. Barbour, \emph{Poisson convergence and random graphs}, Math. Proc.
  Cambridge Philos. Soc. \textbf{92} (1982), no.~2, 349--359.

\bibitem{BKR89}
A.~D. Barbour, M.~Karo\'{n}ski, and A.~Ruci\'{n}ski, \emph{A central limit
  theorem for decomposable random variables with applications to random
  graphs}, J. Combin. Theory Ser. B \textbf{47} (1989), no.~2, 125--145.

\bibitem{BB19}
A.~Basak and R.~Basu, \emph{Upper tail large deviations of regular subgraph counts in Erd{\H o}s-R{\'e}nyi graphs in the full localized regime}, arXiv preprint arXiv:1912.11410 (2019).

\bibitem{Ber16}
R.~Berkowitz, \emph{A quantitative local limit theorem for triangles in random
  graphs}, arXiv preprint arXiv:1610.01281 (2016).

\bibitem{Ber18}
R.~Berkowitz, \emph{A local limit theorem for cliques in {$G(n,p)$}}, arXiv
  preprint arXiv:1811.03527 (2018).

\bibitem{BGLZ17}
B.~B. Bhattacharya, S.~Ganguly, E.~Lubetzky, and Y.~Zhao, \emph{Upper tails and
  independence polynomials in random graphs}, Adv. Math. \textbf{319} (2017),
  313--347.

\bibitem{Bol81}
B.~Bollob\'{a}s, \emph{Threshold functions for small subgraphs}, Math. Proc.
  Cambridge Philos. Soc. \textbf{90} (1981), no.~2, 197--206.

\bibitem{BW89}
B.~Bollob\'{a}s and J.~C. Wierman, \emph{Subgraph counts and containment
  probabilities of balanced and unbalanced subgraphs in a large random graph},
  Graph theory and its applications: {E}ast and {W}est ({J}inan, 1986), Ann.
  New York Acad. Sci., vol. 576, New York Acad. Sci., New York, 1989,
  pp.~63--70.


\bibitem{Cha17}
S.~Chatterjee, \emph{Large deviations for random graphs}, Lecture Notes in
  Mathematics, vol. 2197, Springer, Cham, 2017, Lecture notes from the 45th
  Probability Summer School held in Saint-Flour, June 2015, \'{E}cole
  d'\'{E}t\'{e} de Probabilit\'{e}s de Saint-Flour.

\bibitem{CD16}
S.~Chatterjee and A.~Dembo, \emph{Nonlinear large deviations}, Adv. Math.
  \textbf{299} (2016), 396--450.


\bibitem{CD20}
N.~Cook and A.~Dembo, \emph{Large deviations of subgraph counts for sparse
  {E}rd{\H{o}}s-{R}{\'e}nyi graphs}, Adv. Math. \textbf{373} (2020).

\bibitem{DKR15}
B.~DeMarco, J.~Kahn, and A.~Redlich, \emph{Modular statistics for subgraph
  counts in sparse random graphs}, Electron. J. Combin. \textbf{22} (2015),
  no.~1, Paper 1.37, 6 pages.

\bibitem{Eld18}
R.~Eldan, \emph{Gaussian-width gradient complexity, reverse log-{S}obolev
  inequalities and nonlinear large deviations}, Geom. Funct. Anal. \textbf{28}
  (2018), no.~6, 1548--1596.

\bibitem{DR16}
B.~DeMarco and A.~Redlich, \emph{Graph decomposition and parity}, J. Graph
  Theory \textbf{82} (2016), no.~4, 374--386.

\bibitem{ER60}
P.~Erd\H{o}s and A.~R\'{e}nyi, \emph{On the evolution of random graphs}, Magyar
  Tud. Akad. Mat. Kutat\'{o} Int. K\"{o}zl. \textbf{5} (1960), 17--61.

\bibitem{Erd45}
P.~Erd{\H{o}}s, \emph{On a lemma of {L}ittlewood and {O}fford}, Bull. Amer.
  Math. Soc. \textbf{51} (1945), 898--902.

\bibitem{FKS1}
J.~Fox, M.~Kwan, and L.~Sauermann, \emph{Combinatorial anti-concentration
  inequalities, with applications}, Math. Proc. Cambridge Philos. Soc., to appear.

\bibitem{GK16}
J.~Gilmer and S.~Kopparty, \emph{A local central limit theorem for triangles in
  a random graph}, Random Structures Algorithms \textbf{48} (2016), no.~4,
  732--750.

\bibitem{Hal77}
G.~Hal\'{a}sz, \emph{Estimates for the concentration function of combinatorial
  number theory and probability}, Period. Math. Hungar. \textbf{8} (1977),
  no.~3-4, 197--211.

\bibitem{HMS19}
M.~Harel, F.~Mousset, and W.~Samotij, \emph{Upper tails via high moments and
  entropic stability}, arXiv preprint arXiv:1904.08212 (2019).

\bibitem{JLR00}
S.~Janson, T.~{\L{}}uczak, and A.~Ruci{\'n}ski, \emph{Random graphs}, Cambridge
  University Press, 2000.

\bibitem{Kar84}
M.~Karo\'{n}ski, \emph{Balanced subgraphs of large random graphs}, Seria
  Matematyka [Mathematics Series], vol.~7.

\bibitem{KR83}
M.~Karo\'{n}ski and A.~Ruci\'{n}ski, \emph{On the number of strictly balanced
  subgraphs of a random graph}, Graph theory (\L ag\'{o}w, 1981), Lecture Notes
  in Math., vol. 1018, Springer, Berlin, 1983, pp.~79--83.

\bibitem{KK13}
P.~G. Kolaitis and S.~Kopparty, \emph{Random graphs and the parity quantifier},
  J. ACM \textbf{60} (2013), no.~5, Art. 37, 34.

\bibitem{PML04}
M.~Loebl, J.~Matou\v{s}ek, and O.~Pangr\'{a}c, \emph{Triangles in random
  graphs}, Discrete Math. \textbf{289} (2004), no.~1-3, 181--185.

\bibitem{Lub66}
D.~Lubell, \emph{A short proof of {S}perner's lemma}, J. Combinatorial Theory
  \textbf{1} (1966), 299.

\bibitem{LZ17}
E.~Lubetzky and Y.~Zhao, \emph{On the variational problem for upper tails in
  sparse random graphs}, Random Structures Algorithms \textbf{50} (2017),
  no.~3, 420--436.

\bibitem{MNV16}
R.~Meka, O.~Nguyen, and V.~Vu, \emph{Anti-concentration for polynomials of
  independent random variables}, Theory Comput. \textbf{12} (2016), Paper No.
  11, 16 pages.

\bibitem{NV13}
H.~H. Nguyen and V.~H. Vu, \emph{Small ball probability, inverse theorems, and
  applications}, Erd{\H{o}}s centennial, Bolyai Soc. Math. Stud., vol.~25,
  J\'{a}nos Bolyai Math. Soc., Budapest, 2013, pp.~409--463.

\bibitem{NW88}
K.~Nowicki and J.~C. Wierman, \emph{Subgraph counts in random graphs using
  incomplete {$U$}-statistics methods}, Proceedings of the {F}irst {J}apan
  {C}onference on {G}raph {T}heory and {A}pplications ({H}akone, 1986),
  vol.~72, 1988, pp.~299--310.

\bibitem{Ros11}
N.~Ross, \emph{Fundamentals of {S}tein's method}, Probab. Surv. \textbf{8}
  (2011), 210--293.

\bibitem{Ruc88}
A.~Ruci\'{n}ski, \emph{When are small subgraphs of a random graph normally
  distributed?}, Probab. Theory Related Fields \textbf{78} (1988), no.~1,
  1--10.

\bibitem{RV85}
A.~Ruci\'{n}ski and A.~Vince, \emph{Balanced graphs and the problem of
  subgraphs of random graphs}, Proceedings of the sixteenth {S}outheastern
  international conference on combinatorics, graph theory and computing ({B}oca
  {R}aton, {F}la., 1985), vol.~49, 1985, pp.~181--190.

\bibitem{SS20}
A.~Sah, M.~Sawhney, \emph{Local limit theorems for subgraph counts}, arXiv preprint arXiv:2006.11369 (2020).

\bibitem{Sch79}
K.~Sch\"{u}rger, \emph{Limit theorems for complete subgraphs of random graphs},
  Period. Math. Hungar. \textbf{10} (1979), no.~1, 47--53.

\bibitem{TV}
T.~Tao and V.~H. Vu, \emph{Additive combinatorics}, Cambridge Studies in
  Advanced Mathematics, vol. 105, Cambridge University Press, Cambridge, 2010.

\bibitem{TV12}
T.~Tao and V.~H. Vu, \emph{The {L}ittlewood-{O}fford problem in high dimensions and a conjecture of {F}rankl and {F}\"{u}redi}, Combinatorica \textbf{32}
  (2012), 363--372.

\end{thebibliography}

\end{document}